\newcommand{\eps}{\varepsilon}
\newcommand{\N}{{\mathbb N}}
\newcommand{\C}{{\mathbb C}}
\newcommand{\Z}{{\mathbb Z}}
\newcommand{\R}{{\mathbb R}}
\newcommand{\wt}{\widetilde}
\newcommand{\wand}{wandering domain}
\newcommand{\bwd}{Baker wandering domain}
\newcommand{\pwd}{plane-filling wandering domain}
\newcommand{\tef}{transcendental entire function}
\newcommand{\tmf}{transcendental meromorphic function}
\newcommand{\tmffin}{transcendental meromorphic function with a finite number of poles}
\newcommand{\mf}{meromorphic function}
\newcommand{\sconn}{simply connected}
\theoremstyle{plain}
\newtheorem{theorem}{Theorem}
\newtheorem{corollary}{Corollary}
\newtheorem{lemma}{Lemma}
\theoremstyle{definition}
\theoremstyle{remark}
\theoremstyle{problem}
\theoremstyle{example}
\newtheorem{example}{Example}
\begin{document}


\title[Slow escaping points]{Slow escaping points of meromorphic functions}

\author{P.J. Rippon}
\address{The Open University \\
   Department of Mathematics and Statistics \\
   Walton Hall\\
   Milton Keynes MK7 6AA\\
   UK}
\email{p.j.rippon@open.ac.uk}

\author{G.M. Stallard}
\address{The Open University \\
   Department of Mathematics and Statistics \\
   Walton Hall\\
   Milton Keynes MK7 6AA\\
   UK}
\email{g.m.stallard@open.ac.uk}



\subjclass{30D05, 37F10}


\begin{abstract}
We show that for any {\tmf} $f$ there is a point $z$ in the Julia set
of $f$ such that the iterates $f^n(z)$ escape, that is, tend
to~$\infty$, arbitrarily slowly. The proof uses new covering results
for analytic functions. We also introduce several slow escaping sets,
in each of which $f^n(z)$ tends to $\infty$ at a bounded rate, and
establish the connections between these sets and the Julia set of
$f$. To do this, we show that the iterates of $f$ satisfy a strong
distortion estimate in all types of escaping Fatou components except
one, which we call a {\pwd}. We give examples to show how varied the structures
of these slow escaping sets can be.
\end{abstract}

\maketitle

\section{Introduction}
\setcounter{equation}{0} Let $f:\C\to\hat{\C}$ be a {\mf} that is
not rational of degree~1, and denote by
$f^{n},\,n=0,1,2,\ldots\,$, the $n$th iterate of~$f$. The {\it
Fatou set} $F(f)$ is defined to be the set of points $z \in \C$
such that $(f^{n})_{n \in \N}$ is well-defined and forms a normal
family in some neighborhood of $z$.  The complement of $F(f)$ is
called the {\it Julia set} $J(f)$ of $f$. An introduction to the
properties of these sets can be found in~\cite{wB93}.

This paper concerns the {\it escaping set} of $f$, defined as follows:
\[
 I(f) = \{z: f^n(z) \text{ is defined for } n\in \N, f^n(z) \to
 \infty \text{ as } n \to \infty \}.
\]
If $f$ is a polynomial, then $I(f)$ is a neighbourhood of $\infty$
in which iterates tend to $\infty$ at a uniform rate, so
$I(f)\subset F(f)$ and $J(f)=\partial I(f)$; see~\cite{aB}. For a
{\tef}~$f$, the escaping set was first studied  by
Eremenko~\cite{E} who proved that
\begin{equation}\label{E1}
I(f)\cap J(f)\ne \emptyset,
\end{equation}
unlike a polynomial, that
\begin{equation}\label{E2}
J(f)=\partial I(f),
\end{equation}
as for a polynomial, and finally that
\begin{equation}\label{E3}
\text{all components of } \overline{I(f)} \text{ are unbounded}.
\end{equation}
Dom\'inguez~\cite{Pat2} showed that the first two of these properties
are true for any {\tmf}, but the third is not. The set $I(f)$ and the
dynamical behaviour of~$f$ on $I(f)$ are more complicated for a
{\tmf} than for a polynomial. For example, $I(f)$ can have infinitely
many components or it can be connected, and simple examples show that
the set $I(f)$ may or may not meet $F(f)$.

For a {\tef} $f$, the {\it fast escaping set} was introduced by
Bergweiler and Hinkkanen in~\cite{BH99}:
\[
A(f) = \{z: \mbox{there exists } L \in \N \text{ such that }
            |f^{n+L}(z)| > M(R,f^{n}), \text{ for } n \in \N\}.
\]
Here,
\[
M(r,f) = \max_{|z|=r} |f(z)|
\]
and $R$ is any value such that $R > \min_{z \in J(f)}|z|$. The set
$A(f)$ has many properties that make it easier to work with than
$I(f)$; for example, all its components are unbounded~\cite{RS05},
whereas for $I(f)$ this is an open question asked by Eremenko
in~\cite{E}. The set $A(f)$ also meets $J(f)$ and we have
$J(f)=\partial A(f)$; see~\cite{BH99} and~~\cite{RS05}. Note that
$A(f)$ is a subset of
\begin{equation}\label{Z}
Z(f)=\{z\in I(f): \frac{1}{n}\log\log |f^n(z)|\to\infty \text{ as
}n\to\infty\},
\end{equation}
which is the set of points that `zip towards $\infty$';
see~\cite{BH99}. The set $Z(f)$ is defined for all {\tmf}s, it
meets $J(f)$ and we have $J(f)=\partial Z(f)$; see~\cite{RS00}.

It is natural to expect that $Z(f)\ne I(f)$ for every {\tmf}~$f$,
but this has not previously been established.
Here we prove a much stronger result; we show that for all
{\tmf}s~$f$ there are points of $I(f)$ whose iterates tend to
$\infty$ arbitrarily slowly, that is, more slowly than at any
given rate.

\begin{theorem}\label{main1}
Let $f$ be a {\tmf}. Then, given any positive sequence $(a_n)$
such that $a_n\to\infty$ as $n\to\infty$, there exist
\[\zeta \in I(f)\cap J(f)\quad\text{and}\quad N\in\N,\]
such that
\begin{equation}\label{slow}
|f^{n}(\zeta)|\le a_n,\quad\text{for }n\ge N.
\end{equation}
\end{theorem}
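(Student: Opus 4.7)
My plan is to construct $\zeta$ by a Cantor-style nested construction inside $J(f)$, building a decreasing family of nonempty compact sets on which the iterates are forced to grow at the prescribed slow rate. First, choose radii $R_n$ with $R_n\le a_n/2$, $R_n\to\infty$ monotonically, and with $R_n/R_{n-1}$ chosen as large as needed (but still compatible with $R_n\le a_n/2$) so that a suitable covering statement applies at each step. It is enough to find $\zeta\in J(f)$ and $N$ such that $|f^n(\zeta)|=R_n$ for all $n\ge N$, since then $\zeta\in I(f)\cap J(f)$ and \eqref{slow} holds automatically.

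\textbf{Covering ingredient.} The crux is a covering lemma for the meromorphic function $f$ of the following qualitative form: there exists $R_0>0$ such that for every $R\ge R_0$ and every $R'$ with $R_0\le R'\le R$, the circle $\{|w|=R'\}$ is the image under $f$ of some compact subset of $J(f)$ that lies in a controlled annulus around $\{|z|=R\}$ (and avoids the poles of~$f$). This is the ``new covering result for analytic functions'' flagged in the abstract, and this is the main obstacle: it must be proved uniformly in $R$ and in the target radius $R'$, so that it can be applied repeatedly at arbitrary scales, for both entire and meromorphic $f$, and must produce preimages within $J(f)$ rather than in arbitrary tracts. The natural route is to exploit unbounded components of $\{|f|>R\}$ (logarithmic tracts in the entire case, together with neighbourhoods of poles in the meromorphic case) and, via a suitable conformal or logarithmic change of variable on such a component, obtain a quasi-covering of large annuli; then intersect with the backward invariant set $J(f)$ using density of preimages.

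\textbf{Nested construction.} With this lemma in hand I would inductively construct nonempty compact sets $E_0\supset E_1\supset\cdots$ in $J(f)$ with the property that $f^n$ maps $E_n$ continuously and surjectively onto $\{|w|=R_n\}$. The base $E_0$ is any compact subset of $J(f)$ supplied by the covering lemma with image $\{|w|=R_0\}$. For the inductive step, apply the covering lemma to the circle $\{|w|=R_n\}$ with target radius $R_{n+1}$: this provides a compact set $K_n\subset J(f)$, lying near $\{|z|=R_n\}$, with $f(K_n)=\{|w|=R_{n+1}\}$. Set
\[
E_{n+1}=\{z\in E_n:f^n(z)\in K_n\}.
\]
Then $E_{n+1}$ is compact, contained in $J(f)$ (by backward invariance), and $f^{n+1}$ maps it surjectively onto $\{|w|=R_{n+1}\}$; the covering lemma's surjectivity ensures $E_{n+1}\neq\emptyset$.

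\textbf{Conclusion.} By the nested intersection property, $\bigcap_n E_n$ is a nonempty compact subset of $J(f)$. Any $\zeta$ in this intersection has $|f^n(\zeta)|=R_n$ for all $n$, so that $f^n(\zeta)\to\infty$ (giving $\zeta\in I(f)\cap J(f)$) and $|f^n(\zeta)|=R_n\le a_n/2\le a_n$ for $n\ge N$, as required. The delicate point throughout is to arrange that no $E_n$ meets a pre-pole at level $\le n$, which is handled inside the covering lemma by choosing preimages away from the (discrete) pre-pole set.
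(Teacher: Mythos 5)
There is a genuine gap: the ``covering ingredient'' you posit is false for general transcendental functions, and its failure mode is precisely the difficulty that occupies most of the paper's Sections~2--3. You want, uniformly in $R\ge R_0$, a preimage of each circle $\{|w|=R'\}$ with $R_0\le R'\le R$ inside a ``controlled annulus around $\{|z|=R\}$.'' But if $m(\rho,f)>1$ on a long range of radii $\rho$, then (as the Harnack argument in Lemma~\ref{Harnack} shows) one in fact has $m(\rho,f)\ge M(\rho,f)^{c}$ there for some $c>0$, and $M(\rho,f)^{c}/\rho\to\infty$; so every point near $\{|z|=R\}$ is mapped to modulus far larger than $R$, and no preimage of $\{|w|=R'\}$ with $R'\le R$ can be found in any bounded annulus around radius $R$. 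In particular your stronger goal $|f^{n}(\zeta)|=R_n$ with $R_n\le a_n/2$ growing slowly is unattainable for such $f$: $|f^{n+1}(\zeta)|\ge m(R_n,f)$ can be forced to exceed $a_{n+1}$ no matter how you choose $R_{n+1}$. This is not a pathological case --- the paper's entire Case~2 argument, and the discussion at the end of Section~5, are devoted to exactly these ``large minimum modulus'' functions.

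The paper's route is therefore essentially different. For finitely many poles it dichotomizes at each scale: when $m$ is somewhere small it applies a Bohr-type two-alternative covering (Lemma~\ref{Bohr}), which yields $f^{2}(A_{m})\supset A_{m}$ and hence the freedom to \emph{dwell} by bouncing between two adjacent annuli; when $m$ is everywhere large it applies the Harnack covering (Lemma~\ref{Harnack}) to build nested continua $\Gamma_m$ with $f(\Gamma_m)=\Gamma_{m+1}$, and then Lemma~\ref{special} extracts slow escape from the observation that the ``not-onto'' alternative $f(B_m)\supset B_m$ must recur infinitely often (else the outside of some $B_M$ would be in the Fatou set). In neither case does the orbit sit on a single circle; it lingers inside a fixed annulus or region for as many steps as the target sequence $(a_n)$ demands. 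For infinitely many poles a separate argument via Ahlfors' five islands theorem is needed, which your sketch also does not supply. The nested-compact-set mechanism (your ``Conclusion'' paragraph) is sound and matches the paper's Lemma~\ref{top}, but the ingredient feeding it must be one of these weaker, alternative-type coverings together with a dwelling device, not a uniform circle-to-circle covering, which does not exist.
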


Our proof of Theorem~\ref{main1} relies on certain new covering
properties of annuli, which we state and prove in Section~2, 
and on the Ahlfors five islands theorem. We
prove Theorem~\ref{main1} for functions with finitely many poles in
Section~3, and for functions with infinitely many poles in Section~4.
We also indicate how the proof of Theorem~\ref{main1} can be
adapted to construct points such that~(\ref{slow}) holds, and
\[
\liminf_{n\to\infty}|f^n(\zeta)|<\infty
\quad\text{and}\quad\limsup_{n\to\infty}|f^n(\zeta)|=\infty.
\]

Rempe~\cite[Theorem~1.4]{lR06} proved a slow escape result for the
exponential family, by using facts about the structure of the
escaping sets of such functions, which are known to be unions of
curves to infinity called {\it dynamic rays}, or {\it hairs}.
Expressed in terms of functions of the form $f_{\lambda}(z)=\lambda
e^z$, where $\lambda\ne 0$, his result states that for any positive
sequence $(a_n)$ such that $a_n\to\infty$ as $n\to\infty$ and
$a_{n+1}=O(\exp(a_n))$ as $n\to \infty$, there is a point $\zeta\in
J(f_{\lambda})$ -- in fact, an escaping endpoint of a dynamic ray --
and $N\in \N$ such that
\[\frac1C a_n\le |f^n(\zeta)|\le Ca_n,\quad \text{for }n\ge N,\]
where $C=\exp(2+2\pi)$.

In Section~5, we show that a two-sided slow escape result of this
type holds for a wide range of meromorphic functions.

\begin{theorem}\label{main4}
Let $f$ be a {\tmffin} and suppose that there are positive
constants $c$, $d$ and $r_0$ such that $d>1$ and
\begin{equation}\label{minmod}
\text{for all }r\ge r_0\;\text{there exists } \rho\in
(r,dr)\;\text{such that } m(\rho,f)\le c.
\end{equation}
If $(a_n)$ is a positive sequence such that $a_n\to\infty$ as
$n\to\infty$ and $a_{n+1}=O(M(a_n,f))$ as $n\to\infty$, then there
exist $\zeta\in J(f)$ and $C>1$ such that
\begin{equation}\label{twosided}
a_n\le |f^n(\zeta)|\le Ca_n,\quad\text{for } n\in\N.
\end{equation}
\end{theorem}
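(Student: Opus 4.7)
The plan is a nested pull-back construction based on the covering results of Section~2, refined to give both the lower and upper bounds on $|f^n(\zeta)|$ simultaneously. For each sufficiently large $n$, we apply (\ref{minmod}) twice, once with $r=a_n$ and once with $r=Ca_n/d$, to select radii $\rho_n\in(a_n,da_n)$ and $\sigma_n\in(Ca_n/d,Ca_n)$ satisfying $m(\rho_n,f)\le c$ and $m(\sigma_n,f)\le c$; here $C>d$ is a constant to be fixed below. Define the closed annulus $B_n=\{z:\rho_n\le|z|\le\sigma_n\}$. Since $f$ has only finitely many poles, $f$ is analytic on $B_n$ for all sufficiently large~$n$.

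Next we verify the key growth estimate $M(\sigma_n,f)\ge\sigma_{n+1}$. Writing $a_{n+1}\le K\,M(a_n,f)$ for some constant $K$ (from $a_{n+1}=O(M(a_n,f))$), we have $\sigma_{n+1}\le Ca_{n+1}\le CK\,M(a_n,f)$. The well-known property that $M(\lambda r,f)/M(r,f)\to\infty$ as $r\to\infty$ for every fixed $\lambda>1$, valid for any {\tmf}, then gives $M(Ca_n/d,f)\ge CK\,M(a_n,f)$ for all large~$n$, so $M(\sigma_n,f)\ge M(Ca_n/d,f)\ge\sigma_{n+1}$. Combined with $m(\rho_n,f)\le c$ and $m(\sigma_n,f)\le c$, this places us in the setting of the covering results of Section~2, which applied to $f:B_n\to\C$ yield a nonempty compact set $L_n\subset B_n$ with $f(L_n)=B_{n+1}$. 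Iterating backwards and taking a decreasing compact intersection gives a point $\zeta_0\in B_{n_0}$ (for some starting index $n_0$) with $\rho_n\le|f^{n-n_0}(\zeta_0)|\le\sigma_n\le Ca_n$ for all $n\ge n_0$; pulling back $n_0$ further steps, and enlarging $C$ to absorb the bounded initial segment, yields $\zeta\in\C$ satisfying (\ref{twosided}) for every $n\in\N$.

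To place $\zeta$ in $J(f)$, we replace $L_n$ by $L_n\cap J(f)$ at each stage of the pull-back. Since $J(f)$ is unbounded and totally invariant under $f$ for every {\tmf}, each set $B_n\cap J(f)$ is nonempty for large $n$, and the surjectivity $f(L_n)=B_{n+1}$ combined with $B_{n+1}\cap J(f)\ne\emptyset$ implies that $L_n\cap J(f)\ne\emptyset$ as well; the nested intersection then lies in $J(f)$. The main technical obstacle, and the heart of the argument, is the first step: formulating and verifying the covering result of Section~2 in enough generality to produce a compact $L_n\subset B_n$ with $f(L_n)=B_{n+1}$, given only the minimum modulus information on $\partial B_n$ together with the growth bound $M(\sigma_n,f)\ge\sigma_{n+1}$.
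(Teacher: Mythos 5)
Your proposal runs into a genuine gap at precisely the point you flag as ``the heart of the argument'': the covering claim $f(L_n)=B_{n+1}$, for a compact $L_n\subset B_n$, is not a consequence of the Section~2 results and is in general false. The Bohr-type lemma (Lemma~\ref{Bohr}, and the variant Lemma~\ref{Bohr1} the paper actually uses here) does \emph{not} say that $f(A)$ covers a single target annulus. What it says is: if $f$ has a point of small modulus on the middle circle of $A$, and $M(r,f)$ is large enough, then $f(A)$ covers at least one of \emph{two} well-separated target annuli. The ``or'' is unavoidable: the Schottky/Landau estimate shows only that $f$ cannot omit a point from each of two separated annuli without being bounded on the middle circle, which contradicts the growth of $M$. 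Nothing prevents $f$ from omitting values in a \emph{single} annulus $B_{n+1}$ (an omitted value at radius comparable to $\sqrt{\rho_{n+1}\sigma_{n+1}}$ is perfectly compatible with the hypotheses $m(\rho_n,f)\le c$, $m(\sigma_n,f)\le c$, and $M(\sigma_n,f)\ge\sigma_{n+1}$), so $f(B_n)\supset B_{n+1}$ can fail.

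This is exactly why the paper's proof works with two annuli at each scale, $A'_n$ near radius $\rho'_n\in(da_n,d^2a_n)$ and $A''_n$ near radius $\rho''_n\in(d^4a_n,d^5a_n)$, proves via Lemma~\ref{Bohr1} that $f(A'_n)$ and $f(A''_n)$ each cover $A'_{n+1}$ \emph{or} $A''_{n+1}$, and then inductively picks $E_n\in\{\overline{A'_n},\overline{A''_n}\}$ so that $f(E_n)\supset E_{n+1}$ before invoking Lemma~\ref{top}. Your single-annulus pull-back cannot be made to run without this branching. A secondary, smaller issue: ``enlarging $C$ to absorb the bounded initial segment'' fixes the upper bound in (\ref{twosided}) for $n<n_0$, but not the lower bound $a_n\le|f^n(\zeta)|$; the paper handles the initial segment by applying Picard's theorem finitely many times to choose preimages of sufficiently large modulus. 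Finally, you do correctly anticipate the role of $J(f)$ being unbounded, but note that the paper justifies $E_N\cap J(f)\ne\emptyset$ by first ruling out Baker wandering domains under (\ref{minmod}) (via Zheng's lemma) and then citing that $J(f)$ has an unbounded component; merely invoking ``$J(f)$ is unbounded and totally invariant'' does not by itself give $B_n\cap J(f)\ne\emptyset$ for \emph{every} large $n$, since $J(f)$ could a priori avoid an entire round annulus.
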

Here
\[
m(r,f)=\min_{|z|=r} |f(z)|,\quad\text{for }r>0,
\]
denotes the {\it minimum modulus} of~$f$. In particular, Theorem~\ref{main4}
applies whenever~$f$ has a finite number of poles and is bounded on some
path to $\infty$.

It is clear that in Theorem~\ref{main4} some restriction on the
sequence $(a_n)$ is needed, such as $a_{n+1}=O(M(a_n,f))$ as $n\to\infty$. We
prove Theorem~\ref{main4}
in Section~5 and we also point out there why this result requires
some hypothesis such as~(\ref{minmod}) about the minimum modulus
of~$f$. The proof of Theorem~\ref{main4} is considerably simpler
than that of Theorem~\ref{main1}, where extra difficulty arises
from the fact that~$m(r,f)$ may be large for long intervals of
values of~$r$. Note that to extend Theorem~\ref{main4} to more general
meromorphic functions, some replacement for the restriction
$a_{n+1}=O(M(a_n,f))$ as $n\to\infty$ would be needed.

Now we introduce sets of points which escape to $\infty$ at various
bounded rates and investigate to what extent the Eremenko
properties~(\ref{E2}) and~(\ref{E3}) hold for these new sets. First
we define the {\it slow escaping set} of a {\tmf} $f$,
\[
 L(f) = \{z\in I(f):  \limsup_{n\to \infty}\frac1n
 \log|f^n(z)|<\infty
 \},
\]
and the {\it moderately slow escaping set} of $f$,
\[
 M(f) = \{z\in I(f): \limsup_{n\to \infty}\frac1n
 \log\log|f^n(z)|<\infty
 \}.
\]
Evidently we have $L(f)\subset M(f)\subset I(f)\setminus Z(f)$.

Next, for a positive sequence $a=(a_n)$ such that $a_n\to \infty$
as $n\to\infty$ we define
\[
 I^a(f) = \{z\in I(f):  |f^n(z)|=O(a_n) \text{ as } n \to \infty
 \},
\]
where the constant in the $O(.)$ condition depends on the
point~$z$. If $a_n\to \infty$ as $n\to\infty$ and $a_n=O(C^n)$ as
$n\to\infty$, for some $C>1$, then $I^a(f)\subset L(f)$. Note
that each set $L(f)$, $M(f)$ and $I^a(f)$ is non-empty and meets $J(f)$, by
Theorem~\ref{main1}.

To state our results about these sets we need some further
notions. First we describe certain types of Fatou components. For
any component $U$ of $F(f)$ we write $U_n$, for $n\in\N$, to denote the component
of $F(f)$ which contains $f^n(U)$. Then $U$ is a {\it \wand} of
$f$ if the sequence $U_n$ is not periodic or pre-periodic. A {\it
\bwd} of $f$ is a wandering domain $U$ of $f$ such that each component
$U_n$ is bounded, $U_n$ surrounds~0 for $n$ large enough, and
$U_n \to \infty$ as $n\to\infty$. The first example of such a
wandering domain was given by Baker~\cite{iB76}.

We also introduce the notion of a {\it {\pwd}} of $f$, which is a {\wand} $U$ of $f$ such that there is a sequence $(n_j)$ in $\N$ for which
\begin{itemize}
 \item $U_{n_j}$ is bounded and surrounds~0,
 \item $U_{n_j}\to\infty$ as $j\to\infty$.
 \end{itemize}
Any {\bwd} is clearly a {\pwd} and for entire functions all {\pwd}s
are {\bwd}s, by~\cite[Theorem~3.1]{iB84}. An example of a {\tmf} with
a {\pwd} which is not a {\bwd} can be found
in~\cite[Theorem~E]{Pat2}; see also Example~1 of this paper.

Our next result includes a `slow escape' version of Eremenko's
property~(\ref{E2}).
\begin{theorem}\label{main2}
Let $f$ be a {\tmf} and let $a=(a_n)$ be a positive sequence such
that $a_n\to\infty$ as $n\to\infty$. Then
\begin{itemize}
 \item[(a)] $L(f)$ and $M(f)$ are completely
invariant under $f$; \item[(b)] $L(f)$, $M(f)$ and $I^a(f)$ are
each dense in $J(f)$;
 \item[(c)] $J(f)=\partial L(f)=\partial M(f)$;
 \item[(d)]
 \begin{align}
 J(f)&\subset \partial I^a(f)\notag\\
 &\subset J(f)\cup \bigcup\{U: U\;\text{is a
 {\pwd}}\}\notag,
 \end{align}
so if $I^a(f)$ meets no {\pwd}, then $J(f)=\partial I^a(f)$.
\end{itemize}
\end{theorem}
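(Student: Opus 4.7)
The plan is to deduce parts (a) and (b) from Theorem~\ref{main1} by elementary invariance arguments, and then to extract (c) and (d) from (b) by combining density with the strong distortion estimates for iterates in Fatou components announced in the abstract. For part~(a), the identity
\[\frac{1}{n}\log|f^n(f(z))|=\frac{n+1}{n}\cdot\frac{1}{n+1}\log|f^{n+1}(z)|\]
together with $|f^n(w)|=|f^{n-1}(z)|$ whenever $f(w)=z$ shows that $\limsup_{n\to\infty}\tfrac1n\log|f^n(\cdot)|$ is unchanged under a single forward or backward iterate, giving complete invariance of $L(f)$; replacing $\log$ by $\log\log$ handles $M(f)$.

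For part~(b), applying Theorem~\ref{main1} with $(a_n)=(n)$ produces $\zeta\in L(f)\cap J(f)\subset M(f)\cap J(f)$. For a general positive sequence $a_n\to\infty$, first replace it by $\tilde a_n=\inf_{k\ge n}a_k$, which is non-decreasing and still tends to $\infty$, apply Theorem~\ref{main1} to obtain $\zeta\in I(f)\cap J(f)$ with $|f^n(\zeta)|\le\tilde a_n$ for $n\ge N$, and observe that any preimage $w$ with $f^k(w)=\zeta$ satisfies
\[|f^n(w)|=|f^{n-k}(\zeta)|\le\tilde a_{n-k}\le\tilde a_n\le a_n\quad\text{for }n\ge N+k,\]
so that $w\in I^a(f)$. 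In each case, backward invariance of the set combined with the density of $\bigcup_n f^{-n}(\zeta)$ in $J(f)$ (valid once $\zeta$ is chosen outside the at most two exceptional values of $f$, which we may arrange by perturbing the sequence) gives density of the set in $J(f)$.

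For parts (c) and~(d), density from (b) gives $J(f)\subset\overline{L(f)}\cap\overline{M(f)}\cap\overline{I^a(f)}$, and since repelling periodic points are dense in $J(f)$ and do not escape, they ensure that no open neighbourhood in $J(f)$ is contained in any of these sets, so $J(f)\subset\partial L(f)\cap\partial M(f)\cap\partial I^a(f)$. The reverse inclusions require a stability statement on Fatou components: if a Fatou component $U$ meets the set in question, then (under the appropriate hypothesis on $U$) all of $U$ lies in that set. For $L(f)$ and $M(f)$, only an \emph{additive} estimate of the form $\log|f^n(z)|-\log|f^n(z')|=o(n)$ on compact subsets of $U$ is needed, and this is available in every Fatou component; this yields $\partial L(f)\cup\partial M(f)\subset J(f)$ and completes~(c). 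For $I^a(f)$ the stronger \emph{multiplicative} estimate $|f^n(z)|/|f^n(z')|=O(1)$ on compacta of $U$ is required, and the main result announced in the abstract is precisely that this holds in every escaping Fatou component \emph{except} a \pwd, giving the inclusion $\partial I^a(f)\subset J(f)\cup\bigcup\{U:U\text{ is a \pwd}\}$ demanded in~(d). The technical heart of the argument, and the main obstacle, is thus the correct formulation and proof of these distortion estimates; given them, the set-theoretic bookkeeping above is routine.
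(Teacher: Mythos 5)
Your decomposition --- (a) by direct invariance computations, (b) via backward-orbit density built on Theorem~\ref{main1} applied to a decreasing majorant of $(a_n)$, and (c),(d) by coupling density with a stability property of escaping Fatou components --- is essentially the paper's route, and you correctly identify the plane-filling wandering domains as the exceptional Fatou components responsible for the extra term in (d).

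The one concrete gap is the distortion estimate you invoke for $L(f)$ and $M(f)$. You assert an additive bound $\log|f^n(z)|-\log|f^n(z')|=o(n)$ on compacta of $U$ and claim it holds in every Fatou component; that is not what is available. The estimate proved in every escaping Fatou component (Lemma~\ref{distort}(a)) is the \emph{power} bound $|f^n(z')|\le|f^n(z)|^C$, which controls only the \emph{ratio} of the logarithms, not their difference. In a Baker wandering domain, for example, $f^n(U)$ contains round annuli whose moduli tend to $\infty$ at an unrestricted rate, so $\log|f^n(z)|$ and $\log|f^n(z')|$ can separate far faster than $o(n)$; your additive estimate would be false there, while the power bound still holds. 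Fortunately the power bound already gives what you need: $\limsup\tfrac1n\log|f^n(z')|\le C\,\limsup\tfrac1n\log|f^n(z)|$ handles $L(f)$, and taking one further logarithm handles $M(f)$. So the conclusion survives, but the estimate must be replaced by the correct one.

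Two minor remarks. You derive $J(f)\subset\partial E$ from density of repelling periodic points, whereas the paper packages the forward inclusion into a single Montel normality argument (Lemma~\ref{Julesc}(a)) applied to a backward-invariant subset with at least three points; both ultimately lean on density of periodic points to show $\mathrm{int}\,E\cap J(f)=\emptyset$, so this is a stylistic rather than a substantive difference. Also, ``perturbing the sequence'' to dodge a Fatou-exceptional $\zeta$ is vague; the standard device, which the paper indicates, is to replace $\zeta$ by $f(\zeta)$ or $f^2(\zeta)$, at least one of which is non-exceptional because $\zeta$ escapes.
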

{\it Remarks}\quad 1.\;If $J(f)$ is connected, then the
sets
\[\partial L(f)=\partial M(f)=J(f),\quad\overline{L(f)}\quad\text{and}\quad\overline{M(f)},\]
are also connected, as are $\partial I^a(f)$ and $\overline{I^a(f)}$
provided that $I^a(f)$ meets no {\pwd}, by Theorem~\ref{main2}(c)
and~(d). Note that $J(f)$ is connected if and only if all Fatou
components of $f$ have connected boundaries; see~\cite{K98}.

2.\; In Theorem~\ref{main2}(d), {\pwd}s arise because they are
`exceptional' Fatou components in the following sense: all other
types of Fatou component $U$ have the property that if $\Delta$ is a
compact disc in $U$, then there exist $C>1$ and $n_0\in\N$ such that
\[
 |f^n(z')|\le C|f^n(z)|,\quad \text{for } z,z'\in \Delta,\;n\ge n_0;
\]
see Theorem~\ref{Fatcomp} in Section~6.

3.\; For any {\mf}~$f$ and any sequence $a=(a_n)$ satisfying the
hypotheses of Theorem~\ref{main4}, it is natural to consider subsets
of $I(f)$ of the form
\[
I_a^a(f)=\{z\in I(f): |f^n(z)|\sim a_n\;\text{as }n\to \infty\},
\]
where the constants in the $\sim$ condition depend on the point~$z$,
and to ask if results similar to those in Theorem~\ref{main2} can be
obtained. Results of this type appear to depend on how fast $a_n$
tends to $\infty$ in relation to the speed of escape of points in
$A(f)$; we shall return to this question in a later paper.

Next we describe a family of {\tmf}s which have many dynamical
properties in common with {\tef}s; see~\cite{BRS08}. Let~$D$ be an
unbounded domain in $\C$ whose boundary consists of piecewise smooth
curves, and suppose that $\{z:|z|>r\}\setminus D\neq\emptyset$ for
all $r>0$. Let~$f$ be a complex-valued function whose domain of
definition contains the closure $\overline{D}$ of $D$. Then $D$ is
called a {\em direct tract} of $f$ if the function $f$ is analytic in
$D$ and continuous in $\overline{D}$ and if there exists $R>0$ such
that $|f(z)|=R$ for $z\in\partial D$ and $|f(z)|>R$ for $z\in D$. For
example, any {\tmffin} has at least one direct tract. However, a
{\tmf} with infinitely many poles may or may not have a direct tract.

Recently, the following results about functions with a direct tract
were obtained; see~\cite[Theorem~5.1(a), Theorem~4.1(c) and
Theorem~5.2(a)]{BRS08}.

Let $f$ be a {\tmf} with a direct tract:
\begin{itemize}
 \item if $U$ is a {\bwd} of $f$, then $\overline{U}\subset Z(f)$;
 \item
there is a constant $r_0>0$ such that if $U$ is a component of $F(f)$
which contains a Jordan curve surrounding  $\{z:|z|=r_0\}$, then $U$
is a {\bwd}.
\end{itemize}

In particular, if $f$ is a {\tmf} with a direct tract, then any
{\bwd} of $f$ does not meet $M(f)$, and any {\pwd} of $f$ is a
{\bwd}. Thus we obtain the following corollary of
Theorem~\ref{main2}(d).
\begin{corollary}\label{cor1}
Let $f$ be a {\tmf} with a direct tract and let $a=(a_n)$ be a
positive sequence such that $a_n\to\infty$ as $n\to\infty$. Then
\begin{itemize}
 \item[(a)]
 \begin{align}
 J(f)&\subset \partial I^a(f)\notag\\
 &\subset J(f)\cup \bigcup\{U: U\;\text{is a
{\bwd}}\},\notag
\end{align}
so if $I^a(f)$ meets no {\bwd}, then $J(f)=\partial I^a(f)$;
 \item[(b)] whenever $I^a(f)\subset M(f)$, we have
$J(f)=\partial I^a(f)$.
\end{itemize}
\end{corollary}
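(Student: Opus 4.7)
The plan is to read off both parts of the corollary directly from Theorem~\ref{main2}(d) together with the two bulleted facts from~\cite{BRS08} quoted just before the corollary, without any new analytic input.

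For part~(a), I would start from the inclusion
\[
J(f)\subset \partial I^a(f)\subset J(f)\cup \bigcup\{U: U\;\text{is a {\pwd}}\}
\]
given by Theorem~\ref{main2}(d). Because $f$ has a direct tract, the second bulleted fact from \cite{BRS08} tells us that every {\pwd} of $f$ is actually a {\bwd}: any such component $U$ has some iterate $U_{n_j}$ which is bounded and surrounds~$0$, and then the second result from~\cite{BRS08} (with $r_0$ as given there) forces all sufficiently late iterates to be bounded and to surround~$0$ and tend to $\infty$, so that $U$ itself is a {\bwd}. Substituting {\bwd} for {\pwd} in the union gives the claimed chain of inclusions. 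The final sentence of~(a) is then the trivial observation that if $I^a(f)$ is disjoint from every {\bwd}, the boundary cannot pick up interior points of any such component.

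For part~(b), I would apply~(a) and show that the hypothesis $I^a(f)\subset M(f)$ forces $I^a(f)$ to meet no {\bwd}. The first bulleted fact from~\cite{BRS08} gives $\overline{U}\subset Z(f)$ for any {\bwd} $U$. But by the definitions of $Z(f)$ in~(\ref{Z}) and of $M(f)$, a point in $Z(f)$ satisfies $\frac1n\log\log|f^n(z)|\to\infty$, whereas a point in $M(f)$ satisfies $\limsup \frac1n\log\log|f^n(z)|<\infty$; hence $Z(f)\cap M(f)=\emptyset$. Therefore $U\cap I^a(f)\subset U\cap M(f)\subset Z(f)\cap M(f)=\emptyset$, so $I^a(f)$ indeed avoids every {\bwd}, and~(a) yields $J(f)=\partial I^a(f)$.

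There is no real obstacle here; the only point that needs a little care is checking that the ``pwd is bwd for direct tract'' implication really is available \emph{for the iterates we care about} — i.e.\ that the sequence $(n_j)$ coming from the definition of a {\pwd} interacts correctly with the $r_0$ from~\cite{BRS08}. But since the $U_{n_j}$ are bounded and surround~$0$ while tending to~$\infty$, eventually $U_{n_j}$ contains a Jordan curve surrounding $\{z:|z|=r_0\}$, so the quoted result applies to $U_{n_j}$ and hence $U$ is a {\bwd}. Once that is noted, both parts follow by formal manipulation.
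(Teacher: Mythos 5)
Your proposal is correct and follows essentially the same route as the paper: the corollary is presented there as an immediate consequence of Theorem~\ref{main2}(d) together with the two facts quoted from~\cite{BRS08}, namely that (for a function with a direct tract) every {\pwd} is a {\bwd} and every {\bwd} lies in $Z(f)$, hence is disjoint from $M(f)$. You spell out both deductions in the same way, including the observation $Z(f)\cap M(f)=\emptyset$ used to pass from~(a) to~(b), so there is nothing to add.
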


We prove Theorem~\ref{main2} in Section~6. We also show there that
Theorem~\ref{main2}(d) and Corollary~\ref{cor1}(a) cannot be improved
to state that we always have $J(f)=\partial I^a(f)$, and that
in Corollary~\ref{cor1}(a) the assumption about
the existence of a direct tract cannot be omitted.

Our final result includes a `slow escape' version of Eremenko's
property~(\ref{E3}) for a {\mf} with a direct tract, and it shows
that a fundamental difference occurs here depending on whether or not
there are {\bwd}s. First recall that if $f$ is a {\tmf} with a direct
tract, then $I(f)$ has at least one unbounded component;
see~\cite[Theorem~1.1]{BRS08}. However, if $f$ has no direct tract,
then $\overline{I(f)}$ may have no unbounded components; for example,
for $f(z)=\frac12 \tan z$ the set $\overline{I(f)}=J(f)$ is totally
disconnected.
\begin{theorem}\label{main3}
Let $f$ be a {\tmf} with a direct tract.
\begin{itemize}
 \item[(a)] Suppose that $a=(a_n)$ is
 a positive sequence such that $a_n\to \infty$ as $n\to\infty$.
 \begin{itemize}
 \item[(i)] If $f$ has no {\bwd}s, then the sets $\partial L(f)$,
 $\partial M(f)$ and $\partial I^a(f)$ all have an unbounded
 component, and so therefore do $\overline{L(f)}$,
 $\overline{M(f)}$ and $\overline{I^a(f)}$.
 \item[(ii)] If $f$ is entire and has no {\bwd}s, then all the
 components of $\partial L(f)$,
 $\partial M(f)$ and $\partial I^a(f)$ are unbounded and so therefore
 are all the components of $\overline{L(f)}$,
 $\overline{M(f)}$ and $\overline{I^a(f)}$.
\end{itemize}
 \item[(b)] If $f$ has a {\bwd}, then all the components of $\overline{M(f)}$ are bounded.
\end{itemize}
\end{theorem}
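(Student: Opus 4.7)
Under the hypotheses of~(a), the first step is to observe that $f$ has no \pwd. Indeed, for a \pwd{} $U$ the component $U_{n_j}$ is, for $j$ large, bounded, surrounds~$0$, and lies in $\{|z|>r_0\}$, so it contains a Jordan curve surrounding $\{|z|\le r_0\}$; by~\cite[Theorem~5.2(a)]{BRS08}, $U_{n_j}$, and hence $U$, is then a \bwd, contrary to assumption. Combining this with parts~(c) and~(d) of Theorem~\ref{main2} gives
\[
\partial L(f)=\partial M(f)=\partial I^a(f)=J(f).
\]
Moreover, the absence of \pwd{} means that the distortion estimate of Theorem~\ref{Fatcomp} (cf.\ Remark~2) holds in every Fatou component~$U$, so each of $L(f)\cap U$, $M(f)\cap U$ and $I^a(f)\cap U$ is clopen in~$U$ and hence equals $U$ or~$\emptyset$. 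It follows that each of $\overline{L(f)}$, $\overline{M(f)}$, $\overline{I^a(f)}$ is the union of $J(f)$ with the closures of the Fatou components it contains, every such closure being attached to $J(f)$ through its boundary. For~(a)(i) it therefore suffices to produce a single unbounded component of~$J(f)$: starting from the unbounded component of $I(f)$ supplied by~\cite[Theorem~1.1]{BRS08} and using that no Fatou component surrounds $\{|z|\le r_0\}$, a boundary-bumping argument on the continuum $J(f)\cup\{\infty\}\subset\hat\C$ produces one. For~(a)(ii) the entire hypothesis gives more: by Baker's classical theorem on \mconn{} Fatou components of entire functions (see e.g.~\cite{iB84}), no \bwd{} implies that $f$ has no \mconn{} Fatou component, and so by Kisaka's theorem~\cite{K98} $J(f)$ is connected. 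Being unbounded, it consists of a single unbounded component, and the structural description above then shows that every component of $\overline{L(f)}$, $\overline{M(f)}$ and $\overline{I^a(f)}$ is unbounded.

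For part~(b), let $U$ be a \bwd{} of~$f$ with orbit $(U_n)$. For $n$ large enough, $U_n$ is itself a \bwd, so by~\cite[Theorem~5.1(a)]{BRS08} we have $\overline{U_n}\subset Z(f)$. Directly from the definitions $M(f)\cap Z(f)=\emptyset$, so $M(f)$ is disjoint from the open set~$U_n$ and therefore $\overline{M(f)}\cap U_n=\emptyset$. Since $U_n$ is bounded, surrounds~$0$, and $U_n\to\infty$, for every $R>0$ we can, for $n$ sufficiently large, choose a Jordan curve $\gamma_n\subset U_n$ surrounding $\{|z|\le R\}$. Any component $C$ of $\overline{M(f)}$ is connected and disjoint from~$\gamma_n$, so it lies in one component of $\C\setminus\gamma_n$. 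Choosing $R>|z_0|$ for a fixed $z_0\in C$, and $n$ accordingly large, forces $C$ into the bounded component of $\C\setminus\gamma_n$, and hence $C$ is bounded.

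The main obstacle is the boundary-bumping step in~(a)(i): upgrading the unbounded component of~$I(f)$ from~\cite{BRS08} to an unbounded component of~$J(f)$ using only the no-\bwd{} hypothesis in the meromorphic case. The other steps follow cleanly from Theorems~\ref{main2} and~\ref{Fatcomp}, Kisaka's theorem, and the structural results in~\cite{BRS08} and Baker's work.
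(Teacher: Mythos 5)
Your part (b) is correct and usefully fills in the details that the paper leaves as ``follows immediately'': you use Lemma~\ref{Zheng} to obtain separating annuli inside the $f^n(U)$, note that $\overline{U_n}\subset Z(f)$ and $M(f)\cap Z(f)=\emptyset$, and conclude boundedness. Your opening reduction in part~(a), using \cite[Theorem~5.2(a)]{BRS08} to show that a {\pwd} would be a {\bwd} and then invoking Theorems~\ref{main2}(c),(d) (equivalently Corollary~\ref{cor1}(a)) to get $\partial L(f)=\partial M(f)=\partial I^a(f)=J(f)$, matches the paper exactly, and your clopenness observation via Lemma~\ref{lemma8b} for passing from $\partial$ to $\overline{\phantom{x}}$ is sound.

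However, there are two genuine gaps in part~(a). First, for (a)(i) you never actually carry out the step of producing an unbounded component of $J(f)$: you describe a ``boundary-bumping argument'' starting from the unbounded component of $I(f)$ given by \cite[Theorem~1.1]{BRS08}, but you explicitly label this ``the main obstacle'' and leave it unresolved. The paper simply cites \cite[Theorem~5.3]{BRS08}, which directly asserts that $J(f)$ has an unbounded component in this situation; you need either to invoke that result or to actually complete the topological argument you sketch.

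Second, your argument for (a)(ii) is flawed. You argue: no {\bwd} $\Rightarrow$ no multiply connected Fatou component (Baker) $\Rightarrow$ $J(f)$ connected (Kisaka). The last implication is wrong: Kisaka's characterisation says $J(f)$ is connected iff all Fatou components have \emph{connected boundaries}, and a simply connected \emph{unbounded} Fatou component can have disconnected boundary, so ``all Fatou components simply connected'' does not yield ``$J(f)$ connected''. The relevant result is \cite[Theorem~1]{K98}, which states that if $f$ is entire and $J(f)$ has a bounded component, then $f$ has a {\bwd}; the contrapositive gives precisely that all components of $J(f)$ are unbounded (not that $J(f)$ is connected), and this is what the paper uses. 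Your claimed conclusion that $J(f)$ ``consists of a single unbounded component'' is both unjustified and stronger than what is needed.
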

We prove Theorem~\ref{main3} in Section~7, and in Section~8 we give a
number of examples which show how varied the structures of the sets
$L(f)$, $M(f)$ and $I^a(f)$ can be.

We end this section by making some observations about the possible
relationships between these various subsets of the escaping set and
the components of the Fatou set when~$f$ is a {\tmf}.
\begin{itemize}
 \item Any Fatou component $U$ which meets $I(f)$ must lie in
$I(f)$, and such a Fatou component must be either a {\wand} or a
Baker (or pre-Baker) domain; that is, $U$ maps eventually into a
$p$-cycle of Fatou components in which $f^{np}(z)\to z_0$ as
$n\to\infty$ but $f^p(z_0)$ is not defined.
 \item If $f$ has a Baker domain $U\subset I(f)$, then $U\subset L(f)$,
by~\cite[Theorem~1]{pR06}.
 \item As pointed out earlier in this section, if $f$ has a direct tract
 and $U$ is a {\bwd} of~$f$, then $\overline{U}\subset Z(f)$ and hence
$\overline{U}\cap M(f)=\emptyset$.
 \item There exists a {\tmf}~$f$, with no direct tract, which has a {\bwd} $U$ such that
$\overline{U}\subset L(f)$; see~\cite[discussion after
Theorem~3]{RS08a}.
 \item There exists a {\tef} with a simply
connected {\wand} contained in $A(f)$; see~\cite{wB08}. \item There
exists a {\tef} with a simply connected {\wand} (either bounded or
unbounded) contained in $L(f)$; see Examples~\ref{ex4} and~\ref{ex5}
in Section~8.
\end{itemize}\vspace*{5pt}

\section{Preliminary results}
\setcounter{equation}{0} The construction of the slowly escaping
point in Theorem~\ref{main1} uses the following simple lemma.
\begin{lemma}\label{top}
Let $E_n$, $n\ge 0$, be a sequence of compact sets in $\C$ and
$f:\C\to\hat{\C}$ be a continuous function such that
\begin{equation}\label{contains}
f(E_n)\supset E_{n+1},\quad\text{for } n\ge 0.
\end{equation}
Then there exists $\zeta$ such that $f^n(\zeta)\in E_n$, for $n\ge 0$.

If $f$ is also meromorphic and $E_n\cap J(f)\ne
\emptyset$, for $n\ge 0$, then there exists $\zeta\in J(f)$ such that
$f^n(\zeta)\in E_n$, for $n\ge 0$.
\end{lemma}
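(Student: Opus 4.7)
The plan is to use a standard nested compact set argument. For each $N\ge 0$ I would define
\[
K_N=\{z\in E_0: f^j(z)\in E_j \text{ for } 0\le j\le N\},
\]
and show that $(K_N)$ is a decreasing sequence of non-empty compact subsets of~$E_0$; then any point $\zeta$ in $\bigcap_{N\ge 0} K_N$ satisfies the conclusion of the first statement.

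Non-emptiness is the only place where the covering hypothesis~(\ref{contains}) enters. Starting from an arbitrary $w_N\in E_N$ and successively pulling back via $E_{j+1}\subset f(E_j)$, I obtain points $w_N, w_{N-1},\ldots, w_0$ with $w_j\in E_j$ and $f(w_j)=w_{j+1}$; in particular $w_0\in K_N$. For compactness, note that $K_N$ is the preimage of the closed set $E_0\times E_1\times\cdots\times E_N$ under the continuous map $E_0\to(\hat{\C})^{N+1}$ given by $z\mapsto(z,f(z),\ldots,f^N(z))$, so $K_N$ is closed in the compact set $E_0$. Here it is important that each $E_n$ is a compact subset of~$\C$, so a point $z\in K_N$ can never have $f^j(z)=\infty$, meaning that poles of~$f$ cause no difficulty. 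The finite intersection property then delivers $\zeta\in\bigcap_{N\ge 0} K_N$.

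For the Julia set version, the idea is to apply the first part with $E_n$ replaced by $E_n':=E_n\cap J(f)$. These sets are compact and, by hypothesis, non-empty, and the key step is to verify that the covering condition persists: if $w\in E_{n+1}'$, then by~(\ref{contains}) some $z\in E_n$ has $f(z)=w$, and since $w\in J(f)$ and $J(f)$ is completely invariant for a meromorphic function, it follows that $z\in J(f)$, whence $z\in E_n'$. The first part applied to $(E_n')$ then yields $\zeta\in E_0'\subset J(f)$ with $f^n(\zeta)\in E_n'\subset E_n$ for every~$n\ge 0$. The whole argument is essentially routine; the only delicate points are handling continuity into $\hat{\C}$ (dealt with by observing that $E_n\subset \C$) and, for the second statement, invoking complete invariance of $J(f)$ under a meromorphic~$f$.
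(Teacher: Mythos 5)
Your proposal is correct and follows essentially the same nested-compact-sets argument as the paper, which defines $F_n=\{z\in E_0: f(z)\in E_1,\ldots,f^n(z)\in E_n\}$ and takes the intersection, then handles the Julia set case by intersecting each $E_n$ with $J(f)$ and invoking complete invariance. Your write-up merely fills in details the paper leaves implicit (non-emptiness via pulling back, the observation that compactness of the $E_n$ in $\C$ keeps the orbit away from $\infty$, and the check that the covering hypothesis passes to $E_n\cap J(f)$), though note the map $z\mapsto(z,f(z),\ldots,f^N(z))$ is not defined on all of $E_0$ when $f$ has poles, so the compactness of $K_N$ is cleaner to see directly via a limit argument rather than as a preimage.
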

\begin{proof}
Let
\[F_n=\{z\in E_0: f(z)\in E_1,\ldots, f^n(z)\in E_n\}.\]
Then, by~(\ref{contains}), $F_n$ is a decreasing sequence of
non-empty compact sets, so $F=\bigcap_{n=0}^{\infty}F_n$ is
non-empty. If $\zeta\in F$, then $f^n(\zeta)\in E_n$, for $n\ge
0$, as required.

The second statement follows by applying the first statement to
the non-empty compact sets $E_n\cap J(f)$, $n\ge 0$, in view of
the complete invariance of $J(f)$.
\end{proof}

In our proof of Theorem~\ref{main1} for functions with a finite
number of poles, we apply Lemma~\ref{top} to sets $E_n$ that are
closed annuli. In order to do this we require two annulus covering
properties. Throughout, we use the following notation, for $z\in\C$
and $0<r<R$:
\begin{itemize}
\item
$A(r,R)=\{z:r<|z|<R\}$,
\item
$B(z,r)=\{w:|w-z|<r\}$.
\end{itemize}

We use a result of Baker and Liverpool~\cite[Lemma~1]{BL74}.

\begin{lemma}\label{BL}
Let $f$ be analytic in the annulus $A(\alpha,\beta)$ and let
$|z_0|=|z|=\sqrt{\alpha\beta}$. If $f$ omits the values~$0$
and~$1$ in $A(\alpha,\beta)$, then
\[|f(z)|\le \exp\left((\log^+|f(z_0)|+C_0)(\exp(\pi^2/\log\gamma)+1)\right),\]
where $C_0$ is a positive absolute constant and
$\gamma=\beta/\alpha$. In particular, if we also have $\gamma\ge 2$,
then
\begin{equation}\label{fbound}
|f(z)|\le (|f(z_0)|+2)^L,
\end{equation}
where $L>2$ is an absolute constant.
\end{lemma}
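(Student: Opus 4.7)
The plan is to deduce the estimate from the Schwarz--Pick lemma applied to $f$ viewed as a map of hyperbolic Riemann surfaces, combined with the classical asymptotics of the hyperbolic metric on $\C\setminus\{0,1\}$. Since $f$ omits $0$ and $1$ on $A(\alpha,\beta)$, both $A(\alpha,\beta)$ and $\C\setminus\{0,1\}$ are hyperbolic, and $f$ does not increase hyperbolic distances:
\[
d_{\C\setminus\{0,1\}}\bigl(f(z),f(z_0)\bigr)\le d_{A(\alpha,\beta)}(z,z_0).
\]

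First I would estimate the right-hand side. The annulus $A(\alpha,\beta)$ is conformally equivalent to the standard annulus $\{1<|w|<\gamma\}$ with $\gamma=\beta/\alpha$, whose hyperbolic density is explicit; the core circle $|z|=\sqrt{\alpha\beta}$ is the unique closed geodesic, and a direct integration shows that its hyperbolic length equals $2\pi^2/\log\gamma$. Consequently, any two points $z,z_0$ on this circle satisfy
\[
d_{A(\alpha,\beta)}(z,z_0)\le \pi^2/\log\gamma.
\]

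Next I would translate the resulting hyperbolic-distance bound on $\C\setminus\{0,1\}$ back to one on Euclidean moduli. The key analytic input is the existence of an absolute constant $C_0>0$ such that, for all $w,w_0\in\C\setminus\{0,1\}$,
\[
\log^+|w|\le \bigl(\log^+|w_0|+C_0\bigr)\bigl(\exp d_{\C\setminus\{0,1\}}(w,w_0)+1\bigr).
\]
This follows by integrating the hyperbolic density along a geodesic from $w_0$ to $w$ and using the classical asymptotic $\rho_{\C\setminus\{0,1\}}(w)\asymp 1/(|w|\log|w|)$ as $|w|\to\infty$, together with a uniform lower bound for the density on compact subsets of $\C\setminus\{0,1\}$ (to handle the case when $|w_0|$ is not yet large). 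Combining the three displays with $w=f(z)$ and $w_0=f(z_0)$ yields the first assertion of the lemma.

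For the ``in particular'' statement, once $\gamma\ge 2$ the quantity $\exp(\pi^2/\log\gamma)+1$ is bounded above by an absolute constant, and rewriting the previous bound in multiplicative form produces $|f(z)|\le(|f(z_0)|+2)^L$ for a suitable absolute $L>2$. The main obstacle is the Euclidean-to-hyperbolic conversion on $\C\setminus\{0,1\}$: obtaining uniform constants, not merely asymptotic ones, that also control the density near the punctures $0$ and $1$. This is the technical heart of the Baker--Liverpool argument and is usually handled via the modular function, i.e.\ the universal covering of $\C\setminus\{0,1\}$ by the upper half plane, together with Schottky-type estimates for its inverse near the cusps; the annulus-side estimate is routine given the explicit hyperbolic density on $\{1<|w|<\gamma\}$.
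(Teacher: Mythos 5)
The paper itself does not prove the first inequality of Lemma~\ref{BL}: it cites Baker and Liverpool~\cite{BL74} for that, and the only argument supplied in the paper is the one-line passage to~(\ref{fbound}), namely the elementary inequality $\log^+t+C\le 2C\log(t+2)$ for $t\ge 0$, $C\ge 2$, applied with $\gamma\ge 2$ so that $\exp(\pi^2/\log\gamma)+1$ is bounded by an absolute constant. You attempt considerably more: a proof of the cited inequality itself via the Schwarz--Pick lemma and the hyperbolic metric of $\C\setminus\{0,1\}$. That is a genuinely different route (it is the modern conceptual account of what Baker and Liverpool do via Schottky-type estimates), and your annulus-side calculation is correct and sharp: the core geodesic of a round annulus of modulus ratio $\gamma$ has hyperbolic length $2\pi^2/\log\gamma$, so two points on the core circle lie at distance at most $\pi^2/\log\gamma$, which is exactly the exponent appearing in the lemma.

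However, as written the argument has a real gap at what you yourself call the technical heart. The estimate
\[
\log^+|w|\le\bigl(\log^+|w_0|+C_0\bigr)\bigl(\exp d_{\C\setminus\{0,1\}}(w,w_0)+1\bigr)
\]
is stated without proof, and gesturing at the asymptotic $\rho_{\C\setminus\{0,1\}}(w)\asymp 1/(|w|\log|w|)$ does not establish it in the needed \emph{uniform} form with an absolute constant $C_0$: one must control the density both near the punctures $0,1$ and on a compact transition region, and one must check that the constants produced really do assemble into the precise multiplicative factor $\exp d+1$ (as opposed to, say, $\exp(cd)$ for some unnamed $c$). Until that is carried out the derivation of the first displayed inequality is not complete. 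Your deduction of the ``in particular'' statement is correct in outline and amounts to the same observation the paper records explicitly; if you want to match the paper you should make the constant explicit via the inequality $\log^+t+C\le 2C\log(t+2)$, $t\ge 0$, $C\ge 2$.
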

The estimate~(\ref{fbound}) follows from the fact that
\[\log^+t+C\le 2C\log(t+2),\quad \text{for }t\ge 0,\; C\ge
2.\] We use Lemma~\ref{BL} to prove our first annulus covering
property, which is related to Bohr's lemma;
see~\cite[page~170]{wH}.

\begin{lemma}\label{Bohr}
Let $f$ be a {\tmffin}, let $cL<1/4$, where $c>0$ and $L$ is the
constant in Lemma~\ref{BL}, and let $R_0=R_0(f)>0$ be so large
that $M(r,f)$ is increasing on $[R_0,\infty)$ and
\begin{equation}\label{Mbig1}
M(r,f)>3^{4L},\quad \text{for } r\ge R_0.
\end{equation}
If $r>R_0$ and
\begin{equation}\label{msmall}
\text{there exists } \rho\in (2r,4r)\text{ such that }\;
\log m(\rho,f)\le c\log M(\rho,f),
\end{equation}
then, for any $R$ and $\tilde{R}$ such that
\[2<R\quad\text{and}\quad R^{10}<\tilde{R}<M(r,f)^{1/10},\]
we have
\[
f\left(A(r, 8r)\right)\;\text{covers}\;\;  A(R,R^5)\;\;
\text{or}\;\; A(\tilde{R},\tilde{R}^5).
\]
\end{lemma}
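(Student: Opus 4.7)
I would argue by contradiction, assuming that $f(A(r,8r))$ covers neither $A(R,R^5)$ nor $A(\tilde{R},\tilde{R}^5)$. Then there are points $w_0\in A(R,R^5)$ and $\tilde{w}_0\in A(\tilde{R},\tilde{R}^5)$ with $w_0,\tilde{w}_0\notin f(A(r,8r))$. Since $f$ has only finitely many poles, by enlarging $R_0$ if necessary I may assume $A(r,8r)$ is pole-free, and then the function
\[
g(z)=\frac{f(z)-w_0}{\tilde{w}_0-w_0}
\]
is analytic on $A(r,8r)$ and omits the values $0$ and $1$, which sets up an application of Lemma~\ref{BL}.

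The natural subannulus to work on is $A(\rho/2,2\rho)$: since $\rho\in(2r,4r)$ it lies inside $A(r,8r)$, it has ratio $4\ge 2$, and its geometric mean circle is $|z|=\rho$, which is exactly where the hypothesis \eqref{msmall} lives. I would then pick $z_0,z_1$ on $|z|=\rho$ with $|f(z_0)|=m(\rho,f)$ and $|f(z_1)|=M(\rho,f)$ and apply \eqref{fbound} with these two points. Using $R>2$ and $R^{10}<\tilde{R}$ one checks that $\tilde{R}/2\le|\tilde{w}_0-w_0|\le 2\tilde{R}^5$, the hypothesis \eqref{msmall} rewrites as $m(\rho,f)\le M(\rho,f)^c$, and $M(\rho,f)\ge M(r,f)>\tilde{R}^{10}$ makes $R^5$ negligible compared with $M(\rho,f)$. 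Together these should yield
\[
|g(z_0)|\le\frac{2(M(\rho,f)^c+R^5)}{\tilde{R}},\qquad |g(z_1)|\ge\frac{M(\rho,f)}{4\tilde{R}^5}.
\]

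Substituting into Lemma~\ref{BL} produces an inequality of the shape
\[
\frac{M(\rho,f)}{4\tilde{R}^5}\le\Bigl(\frac{2M(\rho,f)^c}{\tilde{R}}+3\Bigr)^{L},
\]
and the remainder of the argument is to contradict this using $cL<1/4$, $\tilde{R}\le M(\rho,f)^{1/10}$ and $M(\rho,f)>3^{4L}$. I would split into the cases $\tilde{R}\ge M(\rho,f)^c$, where the right-hand side is bounded by $5^L$ while the left is at least $M(\rho,f)^{1/2}/4$, and $\tilde{R}<M(\rho,f)^c$, where the right-hand side is at most a constant times $M(\rho,f)^{cL}\tilde{R}^{-L}$; in each case I would compare the exponents of $M(\rho,f)$ on the two sides. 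The main obstacle I anticipate is this last, delicate exponent bookkeeping: the four constraints $cL<1/4$, $R^{10}<\tilde{R}$, $\tilde{R}^{10}<M(r,f)$ and $M(r,f)>3^{4L}$ all have to be used together tightly, with $cL<1/4$ precisely guaranteeing that the resulting exponent on the left dominates the one on the right.
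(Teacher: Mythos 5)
Your overall strategy is the same as the paper's: assume two omitted values $w_0\in A(R,R^5)$, $\tilde w_0\in A(\tilde R,\tilde R^5)$ in a subannulus of modulus $\ge 2$ with central circle $|z|=\rho$, form $g=(f-w_0)/(\tilde w_0-w_0)$, apply Lemma~\ref{BL} from the minimum-modulus point $z_0$, and deduce a contradiction. Choosing $z_1$ on $|z|=\rho$ with $|f(z_1)|=M(\rho,f)$ and writing $|g(z_1)|\le(|g(z_0)|+2)^L$ is algebraically equivalent to the paper's step of bounding $|f(z)|$ on the whole circle and contradicting the definition of $M(\rho,f)$; and your lower bound $|\tilde w_0-w_0|\ge\tilde R/2$ is in fact sharper than the paper's $\ge R^5$, so up to that point you are doing slightly better.

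The trouble is the endgame. Your proposed case split on $\tilde R\gtrless M(\rho,f)^c$ with the rounded constants $5^L$ and $M(\rho,f)^{1/2}/4$ does not close against the only quantitative lower bound available, namely $M(\rho,f)>3^{4L}$. In Case~1 you need $M(\rho,f)^{1/2}/4>5^L$, i.e.\ $M(\rho,f)>16\cdot 25^L$, and $81^L>16\cdot25^L$ fails when $L$ is close to its lower bound $2$ (it requires roughly $L>2.36$). Case~2 is worse: the inequality $M(\rho,f)^{1-cL}\le 4\cdot 5^L\,\tilde R^{5-L}$ forces a further split on $L\gtrless 5$ and still leaves a constant that $3^{4L}$ does not dominate for small $L$. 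Since $L$ is an unspecified absolute constant $>2$, you cannot assume it is large. The fix is to avoid the case split entirely, as the paper does: having reached $|g(z_0)|+2\le 2M(\rho,f)^c/\tilde R+2R^5/\tilde R+2$, note that $c<1/(4L)$ gives $2M(\rho,f)^c/\tilde R<M(\rho,f)^{1/(4L)}$ (using $\tilde R>2$), while $2R^5/\tilde R<1$ and $3<M(\rho,f)^{1/(4L)}$ by \eqref{Mbig1}, so the whole bracket is $<2M(\rho,f)^{1/(4L)}$; then $(|g(z_0)|+2)^L<2^L M(\rho,f)^{1/4}$, $2\tilde R^5<2M(\rho,f)^{1/2}$, and $1+2^{L+1}<3^L<M(\rho,f)^{1/4}$ (this is exactly where $M(\rho,f)>3^{4L}$ is used) give $M(\rho,f)\le R^5+2\tilde R^5(|g(z_0)|+2)^L<M(\rho,f)$, a contradiction with no case analysis. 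So: right plan, right lemma, right choice of points, but the final bookkeeping should bound $2M(\rho,f)^c/\tilde R$ and $3$ each by $M(\rho,f)^{1/(4L)}$ rather than case-splitting on $\tilde R$ versus $M(\rho,f)^c$.
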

\begin{proof}
Suppose that $r>R_0$. By~(\ref{msmall}), there exists $\rho\in (2r, 4r)$ such that
$m(\rho,f)\le M(\rho,f)^c$. Then $A(r,8r)\supset
A(\tfrac12\rho,2\rho)$.

Now suppose that $f$ omits in $A(\tfrac12\rho, 2\rho)$ two values:
\[
w_1\in A(R,R^5)\quad\text{and}\quad w_2\in A(\tilde{R},\tilde{R}^{5}).
\]
Then $w_1\ne w_2$ and
\[g(z)=\frac{f(z)-w_1}{w_2-w_1}\]
omits in $A(\tfrac12\rho, 2\rho)$ the values 0 and 1, so we can
apply~(\ref{fbound}) to the function $g$.

Take $z_0$ such that $|z_0|=\rho$ and $|f(z_0)|=m(\rho,f)\le
M(\rho,f)^c$. Since $\tilde{R}\ge 2R^5$, we have
\[|g(z_0)|\le \frac{|f(z_0)|+R^5}{\tilde{R}-R^5} \le \frac{|f(z_0)|+R^5}{R^5}\,.\]
Therefore, for $|z|=\rho$, we have, by~(\ref{fbound}),
\begin{align}
|f(z)|&\le |w_1|+(|w_2|+|w_1)|)|g(z)|\notag\\
&\le R^5+2\tilde{R}^5\left(|g(z_0)|+2\right)^L\notag\\
&\le R^5+2\tilde{R}^5\left(\frac{|f(z_0)|+3R^5}{R^5}\right)^L\notag\\
&\le
R^5+2\tilde{R}^5\left(\frac{M(\rho,f)^c+3R^5}{R^5}\right)^L.\notag
\end{align}
Now
\[R<\tilde{R}<M(\rho,f)^{1/10},\quad 0<cL<1/4\quad \text{and}\quad
1+2^{L+1}<3^L<M(\rho,f)^{1/4},\]
by~(\ref{Mbig1}) and the fact that $L>2$. We deduce that, for
$|z|=\rho$,
\begin{align}
|f(z)|&\le M(\rho,f)^{1/2}+2M(\rho,f)^{1/2}\left(M(\rho,f)^c+M(\rho,f)^{1/(4L)}\right)^L\notag\\
&< M(\rho,f)^{3/4}(1+2^{L+1})\notag\\
&< M(\rho,f),\notag
\end{align}
which is a contradiction. Thus $f\left(A(\tfrac12\rho, 2\rho)\right)$
covers at least one of the annuli $A(R,R^5)$ or
$A(\tilde{R},\tilde{R}^{5})$, and so $f\left(A(r,8r)\right)$ does
this also.
\end{proof}

Next, we require a certain Hadamard convexity property.
\begin{lemma}\label{Had1}
Let~$f$ be a {\tmffin}. Then there exists $R_1=R_1(f)>0$ such that
\begin{equation}\label{Had2}
M(r^c,f)\ge M(r,f)^c,\quad\text{for } r\ge R_1,\;c>1.
\end{equation}
\end{lemma}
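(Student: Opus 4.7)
\emph{Plan.} Since $f$ has only finitely many poles, there is $R_*>0$ such that $f$ is analytic in $\{z:|z|>R_*\}$. The approach is to apply Hadamard's three circles theorem on arbitrary closed sub-annuli of this region, which shows that $\phi(s):=\log M(e^s,f)$ is convex in $s$ on $(\log R_*,\infty)$. Substituting $s=\log r$ converts the desired inequality~(\ref{Had2}) into $\phi(cs)\ge c\,\phi(s)$, and this holds for every $c>1$ and every $s$ past some fixed $s_1$ if and only if the function $s\mapsto \phi(s)/s$ is non-decreasing on $[s_1,\infty)$. The lemma therefore reduces to establishing this monotonicity of $\phi(s)/s$ for large $s$.

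Next I would record that, because $f$ is transcendental and has only finitely many poles, its entire part is transcendental and asymptotically dominates the finitely many principal parts, giving the super-linear growth
\[
\frac{\phi(s)}{s}=\frac{\log M(e^s,f)}{s}\to\infty\quad\text{as } s\to\infty.
\]
The remaining content is then the general claim: a convex function $\phi$ on $(s_0,\infty)$ with $\phi(s)/s\to\infty$ must have $\phi(s)/s$ eventually non-decreasing. I would prove this by considering the auxiliary function $h(s)=s\,\phi'_+(s)-\phi(s)$, whose distributional derivative $s\,\phi''(s)$ is non-negative, so $h$ is non-decreasing. If $h$ were bounded above by some constant $C$, then $(\phi(s)/s)'=h(s)/s^2\le C/s^2$ would be integrable at infinity and force $\phi(s)/s$ to be bounded, contradicting super-linearity; hence $h(s)\to+\infty$, so $h\ge 0$ on some $[s_1,\infty)$, which gives $(\phi(s)/s)'\ge 0$ there. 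Setting $R_1=e^{s_1}$ then yields~(\ref{Had2}).

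The main obstacle is this short convex-function-plus-super-linear-growth lemma: it is elementary but needs a little care because $\phi$ is only convex (not necessarily smooth), so one must use one-sided derivatives and interpret $\phi''$ as a non-negative measure. Once this is in place, the two remaining ingredients are entirely standard: Hadamard's three circles theorem in the annulus at infinity supplies the convexity of $\phi$, and the transcendence of the entire part of~$f$ supplies the super-linear growth $\log M(r,f)/\log r\to\infty$.
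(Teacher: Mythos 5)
Your proof is correct and rests on the same two pillars as the paper's: convexity of $\phi(s)=\log M(e^s,f)$ (via Hadamard's three circles theorem on the annulus where $f$ is pole-free) and the super-linear growth $\phi(s)/s\to\infty$. The paper states the lemma follows from these facts together with the consequence $rM'(r)/M(r)=\phi'_+(s)\to\infty$, deferring the remaining details to a cited reference; you instead work with the auxiliary function $h(s)=s\phi'_+(s)-\phi(s)$, show it is non-decreasing, and conclude $h\to\infty$ by integrating $(\phi/s)'=h/s^2$. The paper's implicit route gets $h\to\infty$ more directly from $\phi'_+\to\infty$ via the tangent-line inequality $\phi(s)\le\phi(s_0)+(s-s_0)\phi'_+(s)$, but these are cosmetic differences in the last step; the decomposition and key lemma are the same. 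Your argument is complete and self-contained, which is a modest improvement on the paper's sketch.
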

This result follows from the fact that $\log M(r,f)$ is a convex
function of $\log r$ such that $\log M(r,f)/\log r\to\infty$ as
$r\to\infty$, and hence
\[\frac{rM'(r)}{M(r)}\to\infty\quad\text{as }r\to\infty,\]
where for definiteness we take $M'(r)$ to be the right-derivative;
see~\cite[Lemma~2.2]{RS08} for a proof of Lemma~\ref{Had1} in the
case that $f$ is entire.

We use Lemma~\ref{Had1} to obtain an annulus covering property
in which we assume an opposite type of hypothesis about $m(r,f)$
to that of Lemma~\ref{Bohr}.
\begin{lemma}\label{Harnack}
Let f be a {\tmffin}, let $k>1$, and let $R_2=R_2(f,k)>0$ be so
large that
\begin{itemize}
 \item $M(r,f)$ is increasing on $[R_2,\infty)$,
 \item $M(r,f)>r^k$ and $s=\sqrt{\log r}>\max\{2\pi,4/(k-1)\}$, for $r>R_2$,
 \item the inequality (\ref{Had2}) holds for $r\ge R_2$ and $c>1$.
\end{itemize}
If $r>R_2$ and
\begin{equation}\label{mbig}
m(\rho,f)> 1,\quad\text{for } \rho\in (r^{1+1/s},r^{k-1/s}),
\end{equation}
then
\begin{itemize}
 \item[(a)] we have
 \[
 \log m(\rho,f)\ge \left(1-\frac{2\pi}{s}\right)\log M(\rho,f)>0,
 \quad\text{for } \rho\in [r^{1+2/s},r^{k-2/s}];
 \]
 \item[(b)] we have
 \[A\left(R,R^{k(1-12/s)}\right)\;\text{surrounds}\;\; A(r,r^k),\]
 where $R=M(r^{1+2/s},f)$, and if $A\subset
 A\left(R,R^{k(1-12/s)}\right)$ is any domain such that $\overline{A}$
is homeomorphic to a closed annulus and surrounds~0, then
$A(r^{1+2/s},r^{k-2/s})$ contains a unique component $B$ of
$f^{-1}(A)$, such that
 \begin{itemize}
 \item[$\bullet$] $\overline{B}$ is homeomorphic to a closed
annulus and surrounds~0,
 \item[$\bullet$] $f(\overline{B})=\overline{A}$,
 \item[$\bullet$]
$f$ maps the inner and outer boundary components of $B$ onto the
corresponding boundary components of $A$.
\end{itemize}
\end{itemize}
\end{lemma}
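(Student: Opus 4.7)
The plan is to prove (a) by a single Harnack inequality on the universal cover (strip) of the annulus, then combine (a) with Hadamard convexity (Lemma~3) and the argument principle to get (b).

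For (a): the hypothesis $m(\rho,f)>1$ on $(r^{1+1/s},r^{k-1/s})$ gives $|f|>1$ on the whole annulus $\mathcal{B}=A(r^{1+1/s},r^{k-1/s})$, so $f$ is analytic and nonvanishing on $\mathcal{B}$ (for $r$ large, since $f$ has finitely many poles), and $u=\log|f|$ is positive and harmonic there. Lifting via $z=e^w$ gives a positive harmonic function $v$ on the strip $\Sigma=\{w:(1+1/s)\log r<\mathrm{Re}\,w<(k-1/s)\log r\}$, which is $2\pi$-periodic in $\mathrm{Im}\,w$. For $\rho\in[r^{1+2/s},r^{k-2/s}]$, any point $w_0$ on the line $\mathrm{Re}\,w=\log\rho$ has distance at least $s=\sqrt{\log r}$ from $\partial\Sigma$, and since $s>2\pi$, any other point on the same line has a $2\pi$-periodic representative within distance $\pi$ of $w_0$. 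Applying Harnack on $B(w_0,s)\subset\Sigma$ at the maximum point $w_0$ of $v$ on the line gives $v(w)\ge\frac{s-\pi}{s+\pi}v(w_0)\ge(1-2\pi/s)\log M(\rho,f)$ for all such $w$, yielding (a).

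For (b): $R=M(r^{1+2/s},f)>r^{k(1+2/s)}>r^k$ by the hypothesis $M(r,f)>r^k$, so $A(r,r^k)$ lies in the bounded component of $\C\setminus A(R,R^{k(1-12/s)})$, giving the surrounding statement. By Hadamard convexity (Lemma~3) with exponent $c=(k-2/s)/(1+2/s)$, $M(r^{k-2/s},f)\ge R^c$; combined with (a),
$$m(r^{k-2/s},f)\ge R^{c(1-2\pi/s)}\ge R^{k(1-12/s)},$$
the last step being a routine calculation using $k>1$ and $s>\max\{2\pi,4/(k-1)\}$. Writing $\mathcal{A}=A(r^{1+2/s},r^{k-2/s})$, on the inner boundary of $\mathcal{A}$ we have $|f|\le R$ and on the outer boundary $|f|\ge R^{k(1-12/s)}$; for any annular $A\subset A(R,R^{k(1-12/s)})$ surrounding $0$, $f$ therefore maps the inner boundary of $\mathcal{A}$ into the bounded component of $\C\setminus\partial_{\mathrm{in}} A$ and the outer boundary into the unbounded component of $\C\setminus\partial_{\mathrm{out}} A$. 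A standard separation argument then yields at least one component of $f^{-1}(\partial_{\mathrm{in}} A)\cap\mathcal{A}$ surrounding $0$, and likewise for $\partial_{\mathrm{out}} A$.

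For uniqueness of the surrounding component: since $f$ has no zeros or poles in $\mathcal{A}$, the winding number $N$ of $f$ around $0$ on any surrounding Jordan curve in $\mathcal{A}$ is a constant positive integer, and by the argument principle the total preimage count in $\mathcal{A}$ of any $w\in A$ is $N$. A surrounding component $\gamma$ of $f^{-1}(\partial_{\mathrm{in}} A)\cap\mathcal{A}$ of mapping degree $d$ satisfies: the winding of $f\circ\gamma$ around $0$ equals both $N$ and $d$, so $d=N$; since the total preimage degree of $\partial_{\mathrm{in}} A$ is $N$, there is exactly one surrounding component. The annular region $B\subset\mathcal{A}$ bounded by the two surrounding preimage curves is the unique annular surrounding component of $f^{-1}(A)$, and Riemann--Hurwitz ($\chi(A)=\chi(B)=0$) forces $f|_B$ to be unbranched of degree $N$; the boundary correspondence follows from the size estimates. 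The main obstacle is the topological/combinatorial argument in (b) coordinating winding-number counts and Riemann--Hurwitz to pin down the unique annular $B$, especially when the local degree $N$ is large.
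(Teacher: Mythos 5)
Your argument follows essentially the same route as the paper: part (a) is proved by lifting $\log|f|$ to a positive harmonic function on a strip and applying Harnack on a disc of radius $s=\sqrt{\log r}$ centred at the maximum point, and part (b) combines the Hadamard convexity estimate $M(r^{k-2/s},f)\ge R^{(k-2/s)/(1+2/s)}$ with part (a) to show $f$ maps the inner and outer circles of $A(r^{1+2/s},r^{k-2/s})$ into $\{|w|\le R\}$ and $\{|w|\ge R^{k(1-12/s)}\}$ respectively, after which the annular preimage component is produced by a separation/argument-principle argument. The only real difference is that you make the uniqueness step explicit via winding-number and degree counting (with Riemann--Hurwitz), whereas the paper establishes that a component of $f^{-1}(A)$ in $\mathcal A$ must surround $0$ and be an annulus via the argument principle and the fact that $f\ne 0$ there, and asserts uniqueness from these properties; your version is a slightly more detailed account of the same topological finish, not a different method.
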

\begin{proof}
By~(\ref{mbig}), the function $u(z)=\log|f(z)|$ is positive
harmonic in $A(r^{1+1/s},r^{k-1/s})$, so $U(t)=u(e^t)$ is positive
harmonic in the strip
\[
S=\{t:\log r+s <\Re(t)<k\log r-s\},
\]
since $s=\sqrt{\log r}$. Now $\log r+2s<k\log r-2s$, since
$s>4/(k-1)$. Thus if $t_1$ and $t_2$ satisfy $\log r
+2s<\Re(t_1)=\Re(t_2)<k\log r-2s$ and $|\Im(t_1)-\Im(t_2)|\le
\pi$, then $B(t_1,s)\subset S$ and $|t_2-t_1|\le \pi<s$. So
\[
\frac{s-\pi}{s+\pi}\le \frac{U(t_2)}{U(t_1)}\le\frac{s+\pi}{s-\pi}\,,
\]
by Harnack's inequality; see~\cite[page~35]{Hay}. Hence, if $z_1$
and $z_2$ satisfy $r^{1+2/s} <|z_1|=|z_2|<r^{k-2/s}$, then
\[
\frac{1-\pi/s}{1+\pi/s}\le \frac{u(z_2)}{u(z_1 )}\le\frac{1+\pi/s}{1-\pi/s}\,.
\]
Since $\pi/s<1/2$, part~(a) then follows.

To prove part~(b), first note that $A\left(R,R^{k(1-12/s)}\right)$
surrounds $A(r,r^k)$ because $R>M(r,f)>r^k$. Next
\begin{equation}\label{inner}
f\left(\{z:|z|=r^{1+2/s}\}\right)\subset\{z:|z|\le
M(r^{1+2/s},f)\}=\{z:|z|\le R\}.
\end{equation}
On the other hand, by part~(a),
\begin{equation}\label{outer}
f\left(\{z:|z|=r^{k-2/s}\}\right)\subset\{z:|z|\ge
M(r^{k-2/s},f)^{1-2\pi/s}\}.
\end{equation}
By~(\ref{Had2}), with
$c=(k-2/s)/(1+2/s)>1$,
\begin{align}
M\left(r^{k-2/s},f\right)^{1-2\pi/s}
&\ge M\left(r^{1+2/s},f\right)^{\frac{k-2/s}{1+2/s}(1-2\pi/s)}\notag\\
&\ge M\left(r^{1+2/s},f\right)^{k(1-12/s)}\notag\\
&=R^{k(1-12/s)}.\notag
\end{align}
Thus, by~(\ref{inner}) and~(\ref{outer}),
\begin{equation}\label{boundary}
f\left(\partial A(r^{1+2/s},r^{k-2/s})\right)\cap
A\left(R,R^{k(1-12/s)}\right)=\emptyset,
\end{equation}
but
$f\left(\partial A(r^{1+2/s},r^{k-2/s})\right)$ does meet both of
the complementary components of $A\left(R,R^{k(1-12/s)}\right)$.
Therefore, since $f$ is analytic in $A(r,r^k)$, we have
\begin{equation}\label{inside}f\left(A(r^{1+2/s},r^{k-2/s})\right)
\supset A\left(R,R^{k(1-12/s)}\right).
\end{equation}

Now let $A\subset A\left(R,R^{k(1-12/s)}\right)$ be a domain such
that $\overline{A}$ is homeomorphic to a closed annulus and
surrounds~0. Then, by~(\ref{boundary}) and~(\ref{inside}), the set
$f^{-1}(A)$ meets $A(r^{1+2/s},r^{k-2/s})$ but does not meet
$\partial A(r^{1+2/s},r^{k-2/s})$. Thus there is at least one
component, $B$ say, of $f^{-1}(A)$ in $A(r^{1+2/s},r^{k-2/s})$ and
$f(\overline{B})=\overline{A}$. By the argument principle, this
component must surround~0 since $f$ is analytic and $f\ne 0$ in
$A(r^{1+2/s},r^{k-2/s})$. Also, $\overline{B}$ has only one bounded
complementary component, namely the one containing~0, because any
other bounded complementary  component must lie in
$A(r^{1+2/s},r^{k-2/s})$ where $f\ne 0$. Hence $\overline{B}$ is
homeomorphic to a closed annulus. The mapping property of the two
boundary components of $B$ follows from the argument principle since
$f:B\to A$ is a proper map. From this we deduce that $B$ is
the unique component of $f^{-1}(A)$ in $A(r^{1+2/s},r^{k-2/s})$ with
these properties. This proves part~(b).
\end{proof}

\section{Functions with finitely many poles}
\setcounter{equation}{0} In this section we prove Theorem~\ref{main1}
for functions with finitely many poles. First we deal with a special
case.
\begin{lemma}\label{special}
Let $f$ be a {\tmffin}. Suppose there is a sequence of continua
$\Gamma_m$, $m\ge 0$, such that
\begin{itemize}
\item[(1)] for each $m\ge 0$, $\Gamma_m$ surrounds~$0$ and has exactly
two complementary components, and $\Gamma_m$ is surrounded by
$\Gamma_{m+1}$; \item[(2)] dist\,$(0,\Gamma_m)\to\infty$ as $m\to \infty$;
\item[(3)] $f(\Gamma_m)= \Gamma_{m+1}$, for $m\ge 0$.
\end{itemize}
Then, given any positive sequence $(a_n)$ such that $a_n\to\infty$
as $n\to\infty$, there exists
\[\zeta \in I(f)\cap J(f)\quad\text{and}\quad N\in\N,\]
such that~(\ref{slow}) holds.
\end{lemma}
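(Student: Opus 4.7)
The plan is to apply Lemma~\ref{top} to a sequence of compact sets $E_n$ carved from the annular regions between consecutive $\Gamma_m$, with moduli controlled by the given sequence $a_n$. First I set up the geometric framework: let $V_m$ denote the bounded complementary component of $\Gamma_m$ (containing~$0$), and let $A_m = V_{m+1} \setminus \overline{V_m}$ be the open annular region between $\Gamma_m$ and $\Gamma_{m+1}$. I fix $m_0$ so large that $V_{m_0}$ contains every pole of $f$, so that $f$ is analytic in a neighbourhood of $\overline{A_m}$ for each $m \ge m_0$. My first step is to establish the basic covering
\[
f(\overline{A_m}) \supset \overline{A_{m+1}}, \qquad m \ge m_0,
\]
by the argument principle along $\partial A_m = \Gamma_m \cup \Gamma_{m+1}$: for $w \in A_{m+1}$, the winding of $f(\Gamma_{m+1}) = \Gamma_{m+2}$ around $w$ is a positive integer (since $w \in V_{m+2}$) while that of $f(\Gamma_m) = \Gamma_{m+1}$ is zero (since $w \notin \overline{V_{m+1}}$), forcing at least one preimage of $w$ to lie in $A_m$; boundary points are handled by $\Gamma_{m+1} \cup \Gamma_{m+2} \subset f(\overline{A_m})$.

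With the naive choice $E_n = \overline{A_{m_0+n}}$, Lemma~\ref{top} already yields $\zeta \in I(f)$ with $f^n(\zeta) \in \overline{A_{m_0+n}}$, hence $|f^n(\zeta)| \ge \mathrm{dist}(0, \Gamma_{m_0+n}) \to \infty$. However, this only provides the upper bound $|f^n(\zeta)| \le \max_{\Gamma_{m_0+n+1}} |z|$, which can be far larger than $a_n$. To enforce $|f^n(\zeta)| \le a_n$ for $n \ge N$, I plan to refine $E_n$ to a circular sub-annulus of $\overline{A_{m_0+n}}$ contained in $\overline{B(0, a_n)}$, using the annulus covering results of Section~2: the free parameter $R$ in Lemma~\ref{Bohr} allows the target annulus $A(R, R^5)$ to be placed at low modulus, and Lemma~\ref{Harnack} supplies a genuine sub-annulus inverse image when $m(\rho, f)$ is large throughout the relevant range. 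Iterating this refinement produces a sequence of circular sub-annuli $E_n$ with $f(E_n) \supset E_{n+1}$ and $E_n \subset \overline{B(0, a_n)}$ for $n \ge N$.

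Finally, to secure $\zeta \in J(f)$, I apply Lemma~\ref{top} to the sets $E_n \cap J(f)$: by complete invariance of $J(f)$ one has $f(E_n \cap J(f)) = f(E_n) \cap J(f) \supset E_{n+1} \cap J(f)$, and each $E_n \cap J(f)$ is non-empty because $E_n$ is an annulus surrounding~$0$ with $\mathrm{dist}(0, E_n) \to \infty$, so $J(f)$ cannot avoid it (alternatively, $J(f) = \partial I(f)$ together with the presence of $I(f)$-points both inside and outside of $E_n$). The hard part will be the refinement step: since the continua $\Gamma_m$ need not be circular, $\overline{A_m}$ can have maximum modulus vastly larger than $\mathrm{dist}(0, \Gamma_m)$, so one cannot take $E_n$ to be the full annular region; reconciling the topological covering $f(\overline{A_m}) \supset \overline{A_{m+1}}$ with the circular-annulus covering of Section~2 to obtain both $f(E_n) \supset E_{n+1}$ and $E_n \subset \overline{B(0, a_n)}$ is the delicate technical point.
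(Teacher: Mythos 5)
The approach in your proposal cannot produce a slowly escaping point. Once you insist on placing $E_n$ inside $\overline{A_{m_0+n}}$, the orbit of $\zeta$ must advance one annular region per iteration, so
\[
|f^n(\zeta)| \ge \operatorname{dist}(0,\Gamma_{m_0+n}),
\]
and this quantity tends to $\infty$ at a rate dictated entirely by the given sequence of continua. For a slowly growing $(a_n)$ one eventually has $\operatorname{dist}(0,\Gamma_{m_0+n}) > a_n$, so $\overline{A_{m_0+n}} \cap \overline{B(0,a_n)} = \emptyset$ and no refinement within $\overline{A_{m_0+n}}$ can help. Shrinking to circular sub-annuli via Lemma~\ref{Bohr}/Lemma~\ref{Harnack} is therefore beside the point; those lemmas are used in the paper only for the general proof in Section~3, not for Lemma~\ref{special}, whose hypotheses already hand you a sequence of continua.

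What the paper's proof does, and what is missing from your proposal, is a mechanism for \emph{lingering} in a fixed compact region for as many iterations as one likes. Setting $B_m$ to be the closed annular region between $\Gamma_m$ and $\Gamma_{m+1}$, one finds that since $f$ is analytic near $B_m$ and $f(\partial B_m) \subset \Gamma_{m+1}\cup\Gamma_{m+2}$, exactly one of two topological possibilities occurs: $f(B_m)=B_{m+1}$, or $f(B_m)=G_{m+1}\cup B_{m+1}$ where $G_{m+1}$ is the bounded complementary component of $B_{m+1}$. In the second case $f(B_m)\supset B_m$ (the image also sweeps across the inside). The crucial observation is that the second case must occur for infinitely many $m$, since otherwise a full neighbourhood of $\infty$ would be invariant and escaping, forcing $J(f)$ to be bounded. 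At these indices $m(j)$ one has $f^d(B_{m(j)})\supset B_{m(j)}$ for every $d$, so Lemma~\ref{top} can be applied to a sequence $(E_n)$ that dwells in $B_{m(j)}$ for $d(j)$ consecutive steps before moving on; choosing $d(j)$ large enough, guided by a subsequence $a_{n(j)}$ with $B_{m(j)}\subset B(0,a_{n(j)})$, gives the bound~(\ref{slow}). Without this two-sided covering at infinitely many scales, your framework has no way to decouple the escape rate of $\zeta$ from the escape rate of the $\Gamma_m$, which is the whole content of the lemma.

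Two smaller points: (i) your observation that $f(\overline{A_m})\supset\overline{A_{m+1}}$ by the argument principle is correct and matches the paper's~(\ref{next1}); (ii) the worry about non-circular $\Gamma_m$ having large diameter is harmless once the lingering mechanism is in place, because $B_{m(j)}$ is compact and one simply waits until $n$ is large enough that $a_n$ exceeds $\max_{\Gamma_{m(j)+1}}|z|$ before moving to the next region.
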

\begin{proof}
First note that we can assume that $(a_n)$ is an increasing
sequence. Also, by renumbering if necessary, we can assume that
$f$ has no poles on~$\Gamma_0$ or outside~$\Gamma_0$.

By~(1) we can define $B_m$, for $m\ge 0$, to be the
union of $\Gamma_m$ and $\Gamma_{m+1}$ and those points that are both
outside $\Gamma_m$ and inside $\Gamma_{m+1}$. Then $B_m$ is a continuum that
surrounds~$0$ and $\partial B_m$ is a subset of $\Gamma_m \cup
\Gamma_{m+1}$. Thus by (3) and the fact that~$f$ is analytic in a
neighbourhood of $B_m$ we have, for each $m\ge 0$, exactly one of
the following possibilities:
\begin{equation}\label{onto}
f(B_m)=B_{m+1},
\end{equation}
\begin{equation}\label{notonto}
f(B_m)= G_{m+1}\cup B_{m+1}\,,
\end{equation}
where $G_{m+1}$ is the complementary component of $B_{m+1}$ that
contains~$0$. Since $J(f)$ is unbounded, we deduce that
\begin{equation}\label{BJul}
B_m\cap J(f)\ne \emptyset,\quad \text{for all } m\ge 0.
\end{equation}

If~(\ref{onto}) holds for all $m\ge M$, say, then (by the
hypotheses~(1) and~(2)) the outside of $B_M$ is contained in the
Fatou set of $f$, which is impossible. Thus there is a strictly
increasing sequence $m(j)\in\N$ such that~(\ref{notonto}) holds for
all $m=m(j)$, $j\in\N$. Hence we have the covering properties
\begin{equation}\label{next1}
f(B_{m})\supset B_{m+1},\quad\text{for }m\ge 0,
\end{equation}
and
\begin{equation}\label{same1}
f(B_{m(j)})\supset B_{m(j)},\quad\text{for } j\in\N.
\end{equation}
By~(\ref{same1}), for any $d\in \N$, we have
\[f^{d}(B_{m(j)})\supset B_{m(j)},\quad\text{for }j\in\N.\]
The idea now is to choose a point $\zeta\in B_0$ which has an
orbit that visits each of the compact sets $B_m$, $m\ge 0$, in
order of increasing $m$, except that the orbit remains in each
$B_{m(j)}$, $j\in\N$, for $d(j)$ steps. To arrange this, we
introduce a sequence $p(j)$ of the form
\[p(j)=d(1)+\cdots +d(j),\quad j\in \N,\]
where $d(j)\in \N$. The sequence $(d(j))$ will be chosen later to
give the desired rate of escape of $f^n(\zeta)$. We also put $m(0)=0$
and $p(0)=0$.

If we define
\[
E_n=\left\{
 \begin{array}{ll}
 B_{n-p(j-1)}, &\text{for }  m(j-1)+p(j-1)\le n< m(j)+p(j-1),\;j\in \N, \\
 B_{m(j)}, &\text{for }  m(j)+p(j-1)\le n< m(j)+p(j),\;j\in \N,
 \end{array}
   \right.
\]
then it follows from~(\ref{next1}) and~(\ref{same1})
that~(\ref{contains}) holds. Thus, by Lemma~\ref{top}
and~(\ref{BJul}), there exists a point $\zeta\in E_0\cap J(f)=B_0\cap
J(f)$ such that, for $j\in \N$,
\[f^n(\zeta)\in B_{n-p(j-1)},\quad \text{for }m(j-1)+p(j-1)\le n< m(j)+p(j-1),\]
and
\[f^n(\zeta)\in B_{m(j)},\quad \text{for }m(j)+p(j-1)\le n< m(j)+p(j).\]
Clearly $\zeta\in I(f)\cap J(f)$ for all possible choices of
$p(j)$.

To complete the proof, we choose a subsequence $(a_{n(j)})$ of
$(a_n)$ such that
\[B_{m(j)}\subset B(0, a_{n(j)}),\]
and then choose $d(j)$ so large that $m(j-1)+p(j-1) \ge n(j)$, for
$j\ge 2$. Then, for $j\ge 2$ and $m(j-1)+p(j-1)\le n< m(j)+p(j)$, the
point $f^n(\zeta)$ lies inside the outer boundary of $B_{m(j)}$, so
\[|f^n(\zeta)|\le a_{n(j)}\le a_{m(j-1)+p(j-1)}\le a_n,\]
since $(a_n)$ is increasing. This proves~(\ref{slow}).
\end{proof}

We now prove Theorem~\ref{main1} for {\it any} {\tmffin}. Once again
we can assume that $(a_n)$ is an increasing sequence such that
$a_n\to\infty$ as $n\to\infty$. 

First take $r_0$ so large that $r_0\ge \max\{R_0,R_1, R_2\}$, where
$R_0=R_0(f)$, $R_1=R_1(f)$ and $R_2=R_2(f,k)$, $4\le k\le 5$, are the
constants appearing in Lemmas~\ref{Bohr},~\ref{Had1}
and~\ref{Harnack}, and also
\begin{equation}\label{r0big}
r_0\ge \exp(120^2)
\end{equation}
and
\begin{equation}\label{Mbig}
\frac{\log M(r,f)}{\log r}\ge 1000,\quad\text{for }r\ge r_0.
\end{equation}

Consider the annulus $A_0=A(r_0,r_0^{k_0})$, where $k_0=5$. The
first step of the proof is to use Lemma~\ref{Bohr} and
Lemma~\ref{Harnack} to choose a sequence of annuli of the form
\[A_m=A(r_m,r_m^{k_m}),\quad m\ge 0,\]
such that, for $m\in \N$,
\begin{equation}\label{cond1}
f(A_{m-1})\supset A_{m},
\end{equation}
\begin{equation}\label{cond2}
r_m>r_{m-1}^{10}\quad\text{and}\quad k_m\ge k_{m-1}(1-12/s_{m-1}),
\end{equation}
where $s_m=\sqrt{\log r_m}$, and
\begin{equation}\label{cond3}
4\le k_m\le 5.
\end{equation}
In particular, note that $A_m$ surrounds $A_{m-1}$
and $r_m\to\infty$ as $m\to\infty$.

Suppose that the annuli $A_0, A_1, \ldots, A_{m-1}$, $m\in\N$, have been chosen so
that they satisfy the above conditions. To choose $A_m$ we consider
two cases.

{\it Case 1}\quad Suppose first that
\begin{equation}\label{rho}
\text{there exists } \rho\in (3r_{m-1},\tfrac38 r_{m-1}^{k_{m-1}})
\text{ such that }\;m(\rho,f)\le 1.
\end{equation}
Then
\[\rho\in (\tfrac23\rho,\tfrac43\rho)\subset(\tfrac13\rho,\tfrac83\rho)
\subset (r_{m-1},r_{m-1}^{k_{m-1}}).\]
Since $M(\rho,f)\ge M(r_0,f)\ge 1$, by~(\ref{r0big}) and~(\ref{Mbig}), it follows from~(\ref{rho}) that we can 
apply Lemma~\ref{Bohr} with $r=\tfrac13 \rho$. We choose~$R$
and~$\tilde{R}$ such that
\begin{equation}\label{lemma2}
r_{m-1}^{10}<R\quad\text{and}\quad
R^{10}<\tilde{R}<M(r_{m-1},f)^{1/10};
\end{equation}
this is possible by~(\ref{Mbig}). With $r=\tfrac13\rho$, we deduce
from~(\ref{rho}),~(\ref{lemma2}) and Lemma~\ref{Bohr} that
\[
f\left(A(r_{m-1}, r_{m-1}^{k_{m-1}})\right)\;\text{covers}\;\;
A(R,R^5)\;\;\text{or}\;\; A(\tilde{R},\tilde{R}^5).
\]
Hence we can choose $r_m=R$ or $r_m=\tilde{R}$ and $k_m=5$ to
ensure that~(\ref{cond1}),~(\ref{cond2}) and~(\ref{cond3}) also hold for $A_m$.

{\it Case 2}\quad On the other hand, suppose that~(\ref{rho}) is
false; that is,
\[
m(\rho,f)> 1,\quad\text{for all } \rho\in (3r_{m-1},\tfrac38
r_{m-1}^{k_{m-1}}).
\]
Then
\[
m(\rho,f)> 1,\quad\text{for all } \rho\in
(r_{m-1}^{1+1/s_{m-1}},r_{m-1}^{k_{m-1}-1/s_{m-1}}),
\]
because
\[(1/s_{m-1})\log r_{m-1}=\sqrt{\log r_{m-1}}>\log 3,\]
by~(\ref{r0big}). Thus, by Lemma~\ref{Harnack} and~(\ref{Mbig}),
\[
f\left(A(r_{m-1}, r_{m-1}^{k_{m-1}})\right)\supset A\left(R,
R^{k_{m-1}(1-12/s_{m-1})}\right),
\]
where $R=M(r_{m-1}^{1+2/s_{m-1}},f)>r_{m-1}^{10}$. Thus we can
choose $r_m=R$ and $k_m=k_{m-1}(1-12/s_{m-1})$ to ensure
that~(\ref{cond1}) and~(\ref{cond2}) hold for $A_m$. To see
that~(\ref{cond3}) also holds for $A_m$, note that, for $1\le j\le
m-1$ we have, by~(\ref{cond2}),
\[k_j\ge k_{j-1}(1-12/s_{j-1})\]
and
\[\frac{s_{j}}{s_{j-1}}=\sqrt{\frac{\log r_{j}}{\log r_{j-1}}}
\ge 3.\]
Also, $k_0=5$ and $s_0\ge 120$, by~(\ref{r0big}).
Thus
\[\frac{12}{s_j}\le \frac{1}{10}\,\left(\frac{1}{3}\right)^{j},\quad\text{for }0\le j\le m-1,\]
so
\[k_m\ge 5\prod_{j=0}^{m-1}\left(1-\frac{1}{10}\,\left(\frac{1}{3}\right)^j\right)\ge 4,\]
as required.

We have now shown that it is possible to construct a sequence of
annuli $A_m$ satisfying conditions~(\ref{cond1}),~(\ref{cond2})
and~(\ref{cond3}). Next we use these annuli to obtain a point
$\zeta\in I(f)\cap J(f)$ satisfying~(\ref{slow}).

First suppose that in this process of choosing the annuli we were in
Case~2 for all $m\ge M$, say. Without loss of generality we can
assume that $M=0$. So, for each $m\in\N$, the annulus~$A_m$ was
obtained by applying Lemma~\ref{Harnack} to $A_{m-1}$. We can also
deduce from Lemma~\ref{Harnack} that if $A\subset
A\left(r_m,r_m^{k_m}\right)$ is any domain such that 
$\overline{A}$ is homeomorphic to a closed annulus
which surrounds~0, then $A_{m-1}$ contains a unique component $B$ of
$f^{-1}(A)$ such that $\overline{B}$ is homeomorphic to a closed
annulus and surrounds~0, $f(\overline{B})=\overline{A}$, and $f$ maps
the inner and outer boundary components of $B$ onto the corresponding
boundary components of $A$.

Therefore, for each $n\ge m\ge 0$ there is a unique set $\Gamma_{m,n}$,
homeomorphic to a closed annulus, contained in $\overline{A_m}$
such that
\begin{itemize}
 \item
$f^{n-m}(\Gamma_{m,n})=\Gamma_{n,n}=\overline{A_n}$, for $n\ge m\ge 0$;
 \item
$\Gamma_{m,n}$ surrounds~0, for $n\ge m\ge 0$;
 \item for $n>m\ge 0$, we have $f(\Gamma_{m,n})=\Gamma_{m+1,n}$,
 and $f$ maps the inner and outer boundary components of $\Gamma_{m,n}$ onto the corresponding
boundary components of $\Gamma_{m+1,n}$;
 \item $\Gamma_{m,n+1}\subset \Gamma_{m,n}$, for $n\ge m\ge 0$.
\end{itemize}
Now let $\Gamma_m=\bigcap_{n\ge m}\Gamma_{m,n}$, for $m\ge 0$. Then each
$\Gamma_m$, $m\ge 0$, is a continuum which surrounds~$0$ and has two
complementary components, and also $\Gamma_{m+1}$ surrounds $\Gamma_m$.
Moreover, for each $m\ge 0$, we have $f(\Gamma_m)=\Gamma_{m+1}$, since
$f(\Gamma_{m,n})=\Gamma_{m+1,n}$, for all $n>m$. Thus the sequence of
continua $\Gamma_m$ has properties~(1),~(2) and~(3) of
Lemma~\ref{special}, so there exists a point $\zeta\in I(f)\cap
J(f)$ satisfying~(\ref{slow}).

The alternative is that in the process of choosing the annuli
$A_m$ we were in Case~1 infinitely often; that is, we obtained
$A_m$ by applying Lemma~\ref{Bohr} to $A_{m-1}$ for infinitely
many~$m$. When we apply Lemma~\ref{Bohr} to $A_{m-1}$, for $m\ge
2$, there exists~$\rho$ such that~(\ref{rho}) holds and for this
value of $\rho$ we have
\[r_{m-2}^{10}<r_{m-1}<M(r_{m-1},f)^{1/10}<M(\tfrac13\rho,f)^{1/10},\]
by~(\ref{cond2}),~(\ref{Mbig}) and the fact that $M(r,f)$ is
increasing on $[r_0,\infty)$. Thus, by applying Lemma~\ref{Bohr}
with $r=\tfrac13\rho$ again, but $R=r_{m-2}$, and
$\tilde{R}=r_{m-1}$, we obtain
\[
f\left(A_{m-1}\right)\;\text{covers}\;\; A_{m-2}\;\;\text{or}\;\;
A_{m-1},\;\;\text{so}\;\; f^{2}\left(A_{m-1}\right)\supset
A_{m-1}.
\]
Hence in this situation we have
\begin{equation}\label{next2}
f(A_{m-1})\supset A_m,\quad\text{for }m\in \N,
\end{equation}
and there is a strictly increasing sequence of positive integers
$m(j)$, $j\ge 0$, such that
\begin{equation}\label{same2}
f\left(A_{m(j)}\right)\;\text{covers}\;\; A_{m(j)-1}\;\;\text{or}\;\;
A_{m(j)},\;\;\text{so}\;\;f^{2}\left(A_{m(j)}\right)\supset A_{m(j)},\;\;\text{for }j\in \N.
\end{equation}

Now we choose a point $\zeta\in \overline{A_0}$ which has an orbit
that visits each of the annuli $\overline{A_m}$, $m\ge 0$, in order
of increasing $m$, except that after entering $\overline{A_{m(j)}}$,
$j\in\N$, for the first time the orbit remains in
$\overline{A_{m(j)-1}}\cup \overline{A_{m(j)}}$ for $d(j)$ steps,
ending in~$A_{m(j)}$. The fact that such a point~$\zeta$ exists can
be deduced from~(\ref{next2}) and~(\ref{same2}) by using
Lemma~\ref{top} in the same way that the existence of the point
$\zeta$ is deduced from~(\ref{next1}) and~(\ref{same1}) in
Lemma~\ref{special}. The only difference here is that the positive
integers $d(j)$ must be even because of~(\ref{same2}). Clearly
$\zeta\in I(f)$ once again and by choosing the integers $d(j)$
appropriately we can ensure that $\zeta$ satisfies~(\ref{slow}), as
in the proof of Lemma~\ref{special}.

To complete the proof of Theorem~\ref{main1} we show that we can
take $\zeta\in J(f)$. We do this, using the second statement in
Lemma~\ref{top}, by proving that each of the closed annuli
$\overline{A_m}$ must meet $J(f)$.

Suppose that $\overline{A_{m(j)}}\subset F(f)$ for some $j\in\N$.
Then $\overline{A_{m(j)}}\subset I(f)$ by normality, since
$\overline{A_{m(j)}}\cap I(f)\ne\emptyset$. However,
by~(\ref{same2}) and Lemma~\ref{top}, for each $j\in \N$ there
exists a point $z_j\in \overline{A_{m(j)}}$ such that the forward
orbit of $z_j$ remains in $\overline{A_{m(j)-1}}\cup
\overline{A_{m(j)}}$, so $z_j\notin I(f)$, a contradiction.
Therefore our supposition must be false. Hence the sets
$\overline{A_m}$ meet $J(f)$ for arbitrarily large $m$ and hence
for all~$m$ by~(\ref{next2}), as required. This completes the
proof of Theorem~\ref{main1} for functions with finitely many
poles.

{\it Remark}\quad  Whenever we apply Lemma~\ref{Bohr}
or~(\ref{notonto}) we have the option to choose the covered annulus
to be either large or within a uniformly bounded distance of~0. If we
do this, then the corresponding points of the orbit of $\zeta$ have
the same property. Hence we can ensure that~(\ref{slow}) holds, and
\[
\liminf_{n\to\infty}|f^n(\zeta)|<\infty
\quad\text{and}\quad\limsup_{n\to\infty}|f^n(\zeta)|=\infty.
\]

\section{Functions with infinitely many poles}
\setcounter{equation}{0}

For functions with infinitely many poles, we prove
Theorem~\ref{main1} using the following version of Ahlfors' five
islands theorem; see~\cite[Corollary to Theorem~VI.8]{T}.

\begin{lemma}\label{islands}
If $f$ is a {\tmf} and $D_i$, $1\le i\le 5$, are {\sconn} domains
bounded by Jordan curves such that the $\overline{D_i}$ are
disjoint, then for each $R>0$ there are infinitely many domains in
$\C\setminus B(0,R)$ each of which is mapped by $f$ univalently
onto one of the $D_i$.
\end{lemma}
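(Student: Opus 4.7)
The plan is to obtain this as a consequence of the classical form of Ahlfors' five islands theorem combined with a normality argument that forces the islands to infinity. First I would invoke the standard statement: for any five simply connected Jordan domains $D_1,\dots,D_5$ in $\hat{\C}$ with pairwise disjoint closures, and any transcendental meromorphic function $f$ on $\C$, there are infinitely many disjoint simply connected subdomains of $\C$, each mapped conformally by $f$ onto some $D_i$. This is the heart of Ahlfors' theory of covering surfaces, and I would cite it from Nevanlinna or Tsuji rather than reprove it.

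The second step is to upgrade ``infinitely many islands'' to ``infinitely many islands outside $B(0,R)$''. After relabelling I may assume $\infty\notin\overline{D_i}$ for $i=1,\ldots,4$, and restrict attention to univalent islands over these four domains, which still form an infinite family. Arguing by contradiction, suppose only finitely many such islands meet $\C\setminus B(0,R)$. Then infinitely many islands $U_j$, which after passing to a subsequence all map univalently onto the same $D_i$, lie entirely in $B(0,R)$. Let $\phi_j:D_i\to U_j$ denote the univalent inverse branches, so that $f\circ\phi_j=\mathrm{id}_{D_i}$. Since the $\phi_j$ take values in the bounded set $B(0,R)$, the family $\{\phi_j\}$ is normal on $D_i$, and a subsequence converges locally uniformly to a limit $\phi$ which by Hurwitz's theorem is either univalent or constant.

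If $\phi$ is univalent then the disjoint sets $\phi_{j_k}(D_i)$ converge in the Hausdorff metric to $\phi(D_i)$, forcing two of them to intersect for large $k$, a contradiction. If instead $\phi\equiv z_0$ for some $z_0\in\overline{B(0,R)}$, then for every $w\in D_i$ the points $\phi_{j_k}(w)\to z_0$ satisfy $f(\phi_{j_k}(w))=w$, so in every neighbourhood of $z_0$ the function $f$ takes every value in $D_i$. This is impossible at any point where $f$ is meromorphic, since at a regular point $f$ has a unique finite limit and at a pole $f(z)\to\infty$.

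The hard part is really step (i), the classical Ahlfors five islands theorem itself, whose proof via Ahlfors' second main theorem of covering surfaces is substantial and which I would not attempt to reconstruct. The extension in step (ii) is comparatively routine given normality and the elementary local behaviour of meromorphic functions.
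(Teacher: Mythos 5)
The paper does not actually prove this lemma; it simply cites it as \cite[Corollary to Theorem~VI.8]{T}, so there is no in-paper argument to compare against. Your proposal is a reconstruction, and its overall structure --- invoke the ``infinitely many univalent islands'' form of the Ahlfors five islands theorem as a black box, then push the islands outside $B(0,R)$ by a normal families argument on inverse branches --- is sound. Two remarks. First, in the univalent-limit case your claim that ``the disjoint sets $\phi_{j_k}(D_i)$ converge in the Hausdorff metric to $\phi(D_i)$'' does not follow from locally uniform convergence (points near $\partial D_i$ are uncontrolled); what you want is to fix a compact set $K\subset D_i$ with non-empty interior, note that $\phi_{j_k}(K)\to\phi(K)$ uniformly and that $\phi(K)$ contains an open disc, so that a single point lies in infinitely many of the pairwise disjoint $U_{j_k}$, a contradiction. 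Second, the case distinction via Hurwitz is unnecessary: for a fixed $w_0\in D_i$ the points $\phi_{j_k}(w_0)$ are distinct preimages of $w_0$ (disjointness of the islands) accumulating at $z_0=\lim\phi_{j_k}(w_0)$; if $z_0$ is a regular point this contradicts the discreteness of $f^{-1}(w_0)$, and if $z_0$ is a pole it contradicts $f(z)\to\infty$ near $z_0$, and this handles both cases at once. With those small repairs the argument is correct, and it does genuinely yield the lemma from the weaker classical statement; whether this constitutes a proof or merely reduces one Tsuji citation to another depends on which form of the five islands theorem you regard as the primitive.
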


Suppose that $f$ is a {\tmf} with infinitely many poles. We have
the following properties:
\begin{itemize}
\item[(1)] the image under $f$ of any open disc around a pole of
$f$ contains a neighbourhood of $\infty$; \item[(2)] if $D_0, D_1,
D_2, D_3, D_4$ are open discs with disjoint closures and $R>0$,
then, by Lemma~\ref{islands}, there exists a Jordan domain $D$ in
$\C\setminus B(0,R)$ such that $f$ maps $D$ univalently onto
$D_i$, for some $i\in \{0,1,\ldots, 4\}$.
\end{itemize}

We now obtain a sequence of Jordan domains tending to $\infty$ with
certain covering properties. First, we use property~(1) and the fact
that~$f$ has infinitely many poles, to obtain open discs
\[D_m=B(z_m,r_m), \quad m\ge 0,\]
such that
\begin{equation}\label{infD}
{\rm dist}\,(0,D_m)\to\infty\;\text{ as }m\to\infty,
\end{equation}
\begin{equation}\label{poles}
z_{m} \;\;\text{is a pole of  }f,\quad\text{for }m\ge 0,
\end{equation}
and
\begin{equation}\label{contains5}
f(D_{m})\supset D_{m+1},\quad\text{for }m\ge 0.
\end{equation}
Next, we use property~(2) to obtain Jordan domains $V_{j}$, $j\ge 0$,
such that
\begin{equation}\label{infV}
{\rm dist}\,(0,V_j)\to\infty\;\text{ as }j\to\infty,
\end{equation}
\begin{equation}\label{contains3} f(D_{5j+4})\supset
V_{j},\quad\text{for }j\ge 0,
\end{equation}
and
\begin{equation}\label{contains2}
f(V_j)\supset D_{m(j)},\quad\text{for }j\ge 0\;\text{and some
}m(j)\in\{5j,5j+1,\ldots,5j+4\}.
\end{equation}
By~(\ref{contains5}),~(\ref{contains3}) and~(\ref{contains2}), we
have
\begin{equation}\label{cycle}
f^{60}(D_{5j+4})\supset D_{5j+4},\quad\text{for }j\ge 0,
\end{equation}
since~$60$ is the least common multiple of $2,3,4,5$ and $6$.

The idea now is to choose a point $\zeta\in \overline{D_0}$ which has
an orbit that visits each of the sets $\overline{D_m}$, $m\ge 0$, in
order of increasing $m$, except that after entering $D_{5j+4}$, $j\ge
0$, for the first time the orbit remains in
$\overline{D_{5j}}\cup\cdots \cup \overline{D_{5j+4}}\cup
\overline{V_j}$ for $d(j)$ steps ending in $\overline{D_{5j+4}}$.

To arrange this, we introduce a sequence $p(j)$ of the form
\[p(j)=d(0)+\cdots +d(j),\quad j\ge 0,\]
where each $d(j)\in 60\N$, and also put $p(-1)=0$. Then, for $j\ge
0$, define
\begin{equation}\label{Edefmero}
E_n=\overline{D_{n-p(j-1)}},\quad\text{for } 5j+p(j-1)\le
n<5j+5+p(j-1),
\end{equation}
and define $E_n$, for $5j+5+p(j-1)\le n< 5j+5+p(j)$, to be $d(j)$
closed sets, each belonging to
$\{\overline{D_{5j}},\overline{D_{5j+1}},\ldots,\overline{D_{5j+4}},\overline{V_j}\}$,
which are arranged in the order defined by the covering
properties~(\ref{contains2}) and~(\ref{cycle}), starting with
$\overline{V_j}$ and ending with $\overline{D_{5j+4}}$.
Then~(\ref{contains}) holds, by~(\ref{contains5}) and~(\ref{cycle}),
so we can use Lemma~\ref{top} to choose the required point $\zeta\in
\overline{D_0}$ such that, for $j\ge 0$,
\[f^n(\zeta)\in \overline{D_{n-p(j-1)}},\quad\text{for }5j+p(j-1)\le n<5j+5+p(j-1),\]
and
\[
f^n(\zeta)\in \overline{D_{5j}}\cup\cdots \cup
\overline{D_{5j+4}}\cup \overline{V_j},\quad \text{for
}5j+5+p(j-1)\le n< 5j+5+p(j).
\]
Clearly $\zeta\in I(f)$ for all possible choices of $p(j)$
by~(\ref{infD}) and~(\ref{infV}). Also, given any positive increasing
sequence $(a_n)$ such that $a_n\to\infty$ as $n\to\infty$, we can
choose $d(j)$ appropriately to ensure that $\zeta$
satisfies~(\ref{slow}), as in the proof of Lemma~\ref{special}.
Finally, note that all poles and their pre-images are in $J(f)$, so
we deduce from the second statement of Lemma~\ref{top} that $\zeta$
can be taken to lie in $J(f)$, as required. This completes the proof
of Theorem~\ref{main1}.

{\it Remark}\quad  Whenever we apply Lemma~\ref{islands} we have the
option to choose the covered disc to be either large or within a
uniformly bounded distance of~0. If we do this, then the
corresponding points of the orbit of $\zeta$ have the same property.
Hence we can ensure that~(\ref{slow}) holds, and
\[
\liminf_{n\to\infty}|f^n(\zeta)|<\infty
\quad\text{and}\quad\limsup_{n\to\infty}|f^n(\zeta)|=\infty.
\]

\section{Proof of Theorem~2}
\setcounter{equation}{0} To prove Theorem~\ref{main4} we use
another annulus covering property related to Bohr's lemma.

\begin{lemma}\label{Bohr1}
Let $f$ be a {\tmffin}, let $c$ and $K$ be positive constants,
and let $ \alpha, \beta,\gamma, \alpha', \beta', \alpha'', \beta''$
and $C$ be constants such that
\begin{equation}\label{alpha}
1<\alpha<\beta,\quad 1<\alpha'<\beta'<\alpha''<\beta''\le C
\quad\text{and}\quad\alpha''/\beta'\ge \gamma=\beta/\alpha.
\end{equation}
There exists $R_0=R_0(f,\gamma,C,c,K)>0$ such that if $r>R_0$,
\begin{equation}\label{msmall1}
1\le R\le KM(r,f)\;\;\text{and}\;\; m(\sqrt{\alpha\beta}\, r,f)\le c,
 \end{equation}
then
\[
f\left(A(\alpha r, \beta r)\right)\;\text{covers}\;\;
A(\alpha'R,\beta'R)\;\; \text{or}\;\; A(\alpha''R,\beta''R).
\]
\end{lemma}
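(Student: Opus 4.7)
The plan is to argue by contradiction along the same lines as the proof of Lemma~\ref{Bohr}, but using the general form of the Baker–Liverpool inequality in Lemma~\ref{BL} to accommodate the fact that here the ratio $\gamma=\beta/\alpha$ need not satisfy $\gamma\ge 2$. Choose $R_0$ large enough that $f$ has no poles outside $|z|=R_0$, that $M(r,f)$ is increasing on $[R_0,\infty)$, and that $R_0$ exceeds the constants $R_1(f)$ from Lemma~\ref{Had1} and any threshold from~(\ref{Mbig}) needed below. Suppose for contradiction that there exist
\[
w_1\in A(\alpha'R,\beta'R)\quad\text{and}\quad w_2\in A(\alpha''R,\beta''R)
\]
which are omitted by $f$ on $A(\alpha r,\beta r)$. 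Then $g(z)=(f(z)-w_1)/(w_2-w_1)$ is analytic on $A(\alpha r,\beta r)$ and omits $0$ and $1$ there, so Lemma~\ref{BL} applies.

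Pick $z_0$ with $|z_0|=\sqrt{\alpha\beta}\,r$ and $|f(z_0)|=m(\sqrt{\alpha\beta}\,r,f)\le c$, as allowed by~(\ref{msmall1}). Using~(\ref{alpha}) and $\alpha''/\beta'\ge\gamma$, estimate
\[
|g(z_0)|\le \frac{|f(z_0)|+|w_1|}{|w_2-w_1|}\le \frac{c+\beta'R}{(\alpha''-\beta')R}\le \frac{c+\beta'R}{\beta'(\gamma-1)R},
\]
so $|g(z_0)|$ is bounded above by a constant $C_1=C_1(\gamma,C,c)$ for $R\ge 1$. Since $\gamma>1$ is fixed, Lemma~\ref{BL} gives an absolute constant $\kappa=\kappa(\gamma)$ such that $|g(z)|\le C_2$ for every $|z|=\sqrt{\alpha\beta}\,r$, where $C_2=C_2(\gamma,C,c)$ depends only on $C_1$, $\kappa$ and the additive constant $C_0$ in Lemma~\ref{BL}. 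Translating back,
\[
|f(z)|\le |w_1|+|w_2-w_1|\,|g(z)|\le \beta'R+(\alpha''+\beta')R\,C_2\le C_3R,\qquad |z|=\sqrt{\alpha\beta}\,r,
\]
for a further constant $C_3=C_3(\gamma,C,c)$.

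The final step is to show that, for $r$ sufficiently large, this upper bound is strictly less than $M(\sqrt{\alpha\beta}\,r,f)$, giving the required contradiction. Since $\sqrt{\alpha\beta}>1$ and $\log M(r,f)/\log r\to\infty$ as $r\to\infty$, Lemma~\ref{Had1} applied with $c=\log(\sqrt{\alpha\beta}\,r)/\log r$ (or directly the convexity of $\log M(r,f)$ in $\log r$) yields
\[
\frac{M(\sqrt{\alpha\beta}\,r,f)}{M(r,f)}\to\infty\qquad\text{as }r\to\infty.
\]
In particular, for $r>R_0=R_0(f,\gamma,C,c,K)$ chosen large enough we have $M(\sqrt{\alpha\beta}\,r,f)>C_3K\,M(r,f)\ge C_3R$, using the hypothesis $R\le KM(r,f)$ from~(\ref{msmall1}). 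This contradicts the bound $|f(z)|\le C_3R$ on the circle $|z|=\sqrt{\alpha\beta}\,r$, so $f(A(\alpha r,\beta r))$ must cover at least one of the two target annuli, as claimed. The main bookkeeping obstacle is tracking the dependence of the constants $C_2,C_3$ on the fixed data $\gamma,C,c$ (and on the Baker–Liverpool exponent $\kappa$ when $\gamma<2$); once these are seen to depend only on $\gamma,C,c$ and not on $r$ or $R$, the growth of $M(r,f)$ absorbs them easily for large~$r$.
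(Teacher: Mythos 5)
Your proof is correct and follows essentially the same line as the paper's: assume two omitted values, pass to $g=(f-w_1)/(w_2-w_1)$, apply Lemma~\ref{BL}, and contradict $M(\sqrt{\alpha\beta}\,r,f)$ by the convexity fact $M(kr,f)/M(r,f)\to\infty$. (One trivial slip: $|w_2|<\beta''R$, not $\alpha''R$, but since $\beta''\le C$ this does not affect the constant $C_3$.)
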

\begin{proof}
Assume that~(\ref{msmall1}) holds for some $r>0$. Then there exists $z_0$ such that
$|z_0|=\sqrt{\alpha\beta}\, r$ and $|f(z_0)|\le c$. Suppose
that~$f$ omits in $A(\alpha r, \beta r)$ two values:
\[
w_1\in A(\alpha'R,\beta'R)\quad\text{and}\quad w_2\in
A(\alpha''R,\beta''R).
\]
Then $w_1\ne w_2$ and
\[g(z)=\frac{f(z)-w_1}{w_2-w_1}\]
omits in $A(\alpha r, \beta r)$ the values 0 and 1, so we can
apply~Lemma~\ref{BL} to the function~$g$. Now
\[|g(z_0)|\le \frac{c+\beta'R}{\alpha''R-\beta'R}\le \frac{c+1}{\alpha''/\beta'-1}\,.\]
Thus, by Lemma~\ref{BL} and~(\ref{alpha}), for $|z|=\sqrt{\alpha\beta}\,r$,
\[
 |g(z)|\le \exp\left(\left(\log^+\left(\frac{c+1}{\gamma-1}\right)+C_0\right)
 \left(\exp\left(\frac{\pi^2}{\log\gamma}\right)+1\right)\right)= D,
\]
where $C_0$ is a positive absolute constant and the positive constant
$D$ depends on~$\gamma$ and~$c$. Therefore, for
$|z|=\sqrt{\alpha\beta}\,r$, we have
\begin{align}
|f(z)|&\le |w_1|+(|w_2|+|w_1)|)|g(z)|\notag\\
&\le \beta'R+(\beta''R+\beta'R)D\notag\\
&\le (1+2D)C KM(r,f),\notag
\end{align}
by~(\ref{alpha}) and~(\ref{msmall1}).

Now, for any $k>1$ we have
\[\frac{M(kr,f)}{M(r,f)}\to\infty\;\;\text{as }r\to\infty,\]
since $\log M(r,f)$ is a convex function of $\log r$ such that $\log M(r,f)/\log r\to\infty$ 
as $r\to\infty$. Therefore, there exists $R_0=R_0(f,\gamma,C,c,K)$ such that if
$r>R_0$, then
\[
|f(z)|<M(\sqrt{\alpha\beta}\,r,f),\quad\text{for }
|z|=\sqrt{\alpha\beta}\,r,
\]
which is a contradiction. Thus we deduce that if $r>R_0$, then
$f\left(A(\alpha r, \beta r)\right)$ covers at least one of the
annuli $A(\alpha'R,\beta'R)$ or $A(\alpha''R,\beta''R)$, as required.
\end{proof}
We also require the following result due to Zheng~\cite{jhZ06}.
\begin{lemma}\label{Zheng}
Let $f$ be a {\tmf} with at most finitely many poles. If $f$ has a
{\bwd} $U$, then for a multiply connected domain $A$ in $U$ such that
each $f^n(A)$, $n\in\N$, contains a closed curve which is not
null-homotopic in $U_n$, there exist annuli $A_n =\{z:r_n <
|z|<R_n\}$, $n\in\N$, and $n_0\in\N$ such that
\[A_n\subset f^n(A), \quad\text{for } n > n_0,\]
$dist(0,A_n)\to\infty$ as $n\to\infty$ and $R_n/r_n\to\infty$ as
$n\to\infty$.
\end{lemma}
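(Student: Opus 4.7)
The plan is to construct, for each sufficiently large $n$, a round annulus $A_n = \{z:r_n < |z| < R_n\}$ contained in $f^n(A)$, and then to verify that $\mathrm{dist}(0, A_n)\to\infty$ and $R_n/r_n \to\infty$ as $n\to\infty$. The argument splits naturally into a topological part (locating a doubly connected piece of $f^n(A)$ surrounding $0$) and an analytic part (showing that its conformal modulus blows up).

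First, I would pick a closed curve $\gamma\subset A$ representing the non-null-homotopic class guaranteed by the hypothesis, and set $\gamma_n := f^n(\gamma)$. Then $\gamma_n$ is not null-homotopic in $U_n$ for every $n\in\N$. Since $U$ is a {\bwd}, there exists $n_1$ such that for $n\ge n_1$ the component $U_n$ is bounded and surrounds $0$; any curve that is not null-homotopic in such a $U_n$ must separate $0$ from $\infty$. Hence for $n\ge n_1$ the connected component $W_n$ of $f^n(A)$ containing $\gamma_n$ is an open, bounded subset of $U_n$ that separates $0$ from $\infty$. A simple planar-topology argument (any open connected set separating $0$ from $\infty$ contains a doubly connected neighbourhood of a Jordan curve around $0$) then produces a doubly connected subdomain $V_n\subset W_n$ surrounding $0$.

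Second, the distance condition is immediate: $V_n\subset U_n$ and $U_n\to\infty$, so the inner boundary component of $V_n$ has distance to $0$ tending to $\infty$. By the Gr\"otzsch module theorem, $V_n$ contains a round annulus $A_n$ centred at the origin with
\[
\log(R_n/r_n) \;\ge\; 2\pi\,\mathrm{mod}(V_n) - C_0
\]
for an absolute constant $C_0$; thus it remains to show that $\mathrm{mod}(V_n)\to\infty$.

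The third step is the hardest, and is where the hypothesis that $f$ has at most finitely many poles enters decisively: for $n$ large enough $f$ is analytic throughout $V_n$, $f(V_n)\supset V_{n+1}$, and $V_n\subset\C\setminus\{0\}$. I would compare $\mathrm{mod}(V_n)$ with $\mathrm{mod}(V_{n+1})$ using the Bohr-type estimate of Lemma~\ref{BL}, applied to a normalisation $g(z)=(f(z)-w_1)/(w_2-w_1)$ with $w_1,w_2$ omitted by $f$ on a suitable subannulus of $V_n$: this forces each iteration to multiply the modulus of the inscribed round annulus by a factor strictly exceeding~$1$ by a uniformly positive amount. An alternative route exploits the super-exponential escape $\overline{U}\subset Z(f)$ cited from~\cite{BRS08} (Theorem~5.1(a)): orbits in $U$ escape extremely rapidly, and a Koebe-type distortion estimate then shows that the outer boundary of $V_n$ spreads far more rapidly than the inner one, giving $\mathrm{mod}(V_n)\to\infty$. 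The main obstacle is precisely this modulus-growth step, which is the technical heart of Zheng's argument in~\cite{jhZ06}; the restriction to finitely many poles is essential so that $f$ is analytic on each $V_n$ where the comparison of moduli takes place.
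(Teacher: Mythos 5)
The paper does not supply a proof of this lemma: it is quoted as a result of Zheng~\cite{jhZ06} and used as a black box, so there is no in-paper argument to compare against. I therefore assess your sketch on its own terms.

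The topological reduction you describe is sound. Once $U_n$ is bounded and surrounds $0$, a curve that is not null-homotopic in $U_n$ necessarily separates $0$ from $\infty$, and from the domain $f^n(A)$ one can extract a doubly connected subdomain $V_n$ separating $0$ from $\infty$; the Gr\"otzsch--Teichm\"uller modulus estimate then reduces $R_n/r_n\to\infty$ to $\mathrm{mod}(V_n)\to\infty$; and $\mathrm{dist}(0,A_n)\to\infty$ is immediate from $U_n\to\infty$. The gap is in the third step: you do not actually prove that $\mathrm{mod}(V_n)\to\infty$. You name two candidate mechanisms --- comparing $\mathrm{mod}(V_n)$ with $\mathrm{mod}(V_{n+1})$ via the Baker--Liverpool estimate of Lemma~\ref{BL}, or exploiting $\overline{U}\subset Z(f)$ together with Koebe distortion --- but carry neither through, and you explicitly flag this as ``the technical heart.'' That is a genuine gap, and it is precisely where the entire content of the lemma lives. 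As written, nothing rules out the a priori possibility that the inscribed round annuli shrink in modulus even while racing to $\infty$. Closing the gap requires a quantitative statement of how $f$ transforms the modulus of the round annulus inscribed in $V_n$; in the spirit of this paper's own Lemmas~\ref{Bohr} and~\ref{Harnack}, that means a dichotomy on $m(r,f)$ inside $V_n$ (small $m$ gives a large image annulus via Bohr's lemma; large $m$ gives one via Harnack plus Hadamard convexity), combined with the growth of $M(r,f)/r$ to push the modulus upward at each step. Without an argument of that kind the lemma is not established, and the restriction to finitely many poles --- which you correctly identify as essential --- has not actually been used in a load-bearing way.
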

\begin{proof}[Proof of Theorem~\ref{main4}]
In the statement of Theorem~\ref{main4}, $f$ is a {\tmffin}, and $c$,
$d$ and $r_0$ are positive constants such that $d>1$ and
\begin{equation}\label{minmod1}
\text{for all }r\ge r_0\;\text{there exists } \rho\in
(r,dr)\;\text{such that } m(\rho,f)\le c.
\end{equation}
Also, $(a_n)$ is a positive sequence such that $a_n\to\infty$ as $n\to\infty$ and
$a_{n+1}=O(M(a_n,f))$ as $n\to\infty$.

We take a positive constant $K$ and $N\in\N$ so large that
\begin{equation}\label{anests}
1\le a_{n+1}\le KM(a_n,f)\quad\text{and}\quad a_n\ge \max\{r_0,R_0\},\quad \text{for } n\ge N,
\end{equation}
where $R_0=R_0(f,d,d^6,c,K)$ is the constant in Lemma~\ref{Bohr1}. Then,
by~(\ref{minmod1}), there exist sequences
$(\rho'_n)$ and $(\rho''_n)$ such that, for $n\ge N$,
\begin{equation}\label{rhodash}
\rho'_n\in (da_n,d^2a_n)\quad\text{and} \quad m(\rho'_n,f)\le c,
\end{equation}
and
\begin{equation}\label{rhoddash}
\rho''_n\in (d^4a_n,d^5a_n)\quad\text{and} \quad m(\rho''_n,f)\le c.
\end{equation}
Now define, for $n\ge N$,
\[
A'_n=A(d^{-1/2}\rho'_n, d^{1/2} \rho'_n)\quad\text{and}\quad
A''_n=A(d^{-1/2}\rho''_n, d^{1/2} \rho''_n).
\]
Then
\begin{equation}\label{constants}
a_n<d^{-1/2}\rho'_n <d^{1/2} \rho'_n < d^{-1/2}\rho''_n <d^{1/2}
\rho''_n < d^{\,6}a_n.
\end{equation}
We now apply Lemma~\ref{Bohr1} with
\begin{equation}\label{definer}
r=a_n,\; \alpha r=d^{-1/2}\rho'_n, \;\beta r=d^{1/2} \rho'_n,
\end{equation}
and
\[
R=a_{n+1},\;\alpha' R=d^{-1/2}\rho'_{n+1}, \;\beta' R=d^{1/2}
\rho'_{n+1},\;\alpha'' R=d^{-1/2}\rho''_{n+1}, \;\beta'' R=d^{1/2}
\rho''_{n+1}.
\]
Then $\alpha''/\beta'=d^{-1}\rho''_{n+1}/\rho'_{n+1}\ge d$, 
by~(\ref{rhodash}) and~(\ref{rhoddash}), and $\beta/\alpha=d$, by~(\ref{definer}). We deduce from Lemma~\ref{Bohr1} and~(\ref{anests}) that for $n\ge N$ we have
\[
f(A'_n)\quad\text{covers} \quad
A'_{n+1}\;\;\text{or}\;\;A''_{n+1};
\]
similarly,
\[
f(A''_n)\quad\text{covers} \quad
A'_{n+1}\;\;\text{or}\;\;A''_{n+1}.
\]
Therefore we can choose, for $n\ge N$, the compact set $E_n$ to be
either $\overline{A'_n}$ or $\overline{A''_n}$ in such a way that
\begin{equation}\label{supset}
f(E_n)\supset E_{n+1},\quad\text{for } n\ge N.
\end{equation}
Then, by Lemma~\ref{top} and~(\ref{constants}), there exists
$\zeta_N\in E_N$ such that
\[
f^{n-N}(\zeta_N)\in E_n\subset
\overline{A(a_n,Ca_n)},\quad\text{for } n\ge N,
\]
where $C=d^{\,6}$.

Without loss of generality, we can assume that $\zeta_N$ is not a
Fatou-exceptional value, so by applying Picard's theorem a finite
number of times we can choose~$\zeta$ such that
$f^N(\zeta)=\zeta_N$ and $|f^n(\zeta)|\ge a_n$, for $n=1,\ldots,
N-1$. Thus, for this~$\zeta$ we have (possibly with a larger
constant $C$)
\[
a_n\le |f^n(\zeta)|\le Ca_n,\quad\text{for } n\in\N.
\]

To ensure that we also have $\zeta\in J(f)$, we observe that $f$
cannot have a {\bwd} $U$. For this would imply,
by Lemma~\ref{Zheng}, the existence of a sequence of
annuli $A(r_n,R_n)$, where $r_n<R_n$, such that $r_n\to\infty$ as $n\to\infty$ and
$R_n/r_n\to\infty$ as $n\to\infty$, and for $n$ large enough
\[
\overline{A(r_n,R_n)}\subset f^{n}(U).
\]
Thus $m(r,f)\to \infty$ as $r\to\infty$ through
\[
 \bigcup_{n=1}^{\infty} (r_n,R_n),
\]
contrary to the hypothesis~(\ref{minmod}).
By~\cite[Theorem~1]{jhZ02} or~\cite[Theorem~5]{RS05a}, it follows
that $J(f)$ has an unbounded component. Thus, by~(\ref{supset}), we can choose
$\zeta_N\in E_N\cap J(f)$, as required. This completes the proof
of Theorem~\ref{main4}.
\end{proof}
Finally in this section we point out why we cannot expect
Theorem~\ref{main4} to hold without some hypothesis such
as~(\ref{minmod}) about the minimum modulus of~$f$. Suppose that
there exists a sequence of annuli $A(r_n,R_n)$, where $0<r_n<R_n$,
such that $r_n\to\infty$ as $n\to\infty$, $R_n/r_n\to\infty$ as
$n\to\infty$ and
\[
m(r,f)>1,\quad\text{for } r_n<r<R_n, \;n\in \N.
\]
Then by an argument similar to that in the proof of
Lemma~\ref{Harnack} we deduce that
\[
m(r,f)>M(r,f)^c,\quad\text{for } 2r_n<r<\tfrac12 R_n, \;n\ge N,
\]
for some constant $0<c<1$ and some $N\in\N$. Since
$M(r,f)^c/r\to\infty$ as $r\to\infty$, it is not possible to
satisfy~(\ref{twosided}) for any positive sequence~$(a_n)$ having the property 
that: $a_{n(j)+1}=O(a_{n(j)})$ as $j\to\infty$, for a sequence $n(j)$, $j\in \N$, such that
$a_{n(j)}\sim\sqrt{r_j R_j}$ as $j\to \infty$.

\section{Proof of Theorem~3}
\setcounter{equation}{0} To prove Theorem~\ref{main2} we need
several preliminary results. First we give a lemma based on two key ideas
from~\cite[proof of Theorem~1]{E}.
\begin{lemma}\label{Julesc}
Let $f$ be a {\tmf} and let $E\subset \C$ be a non-empty set.
\begin{itemize}
 \item[(a)] If $E$ has a subset $E'$ with at least 3~points, such
 that $E'$ is backwards invariant under~$f$, and int\,$E\cap J(f)=\emptyset$,
 then $J(f) \subset \partial E'$ and $J(f) \subset \partial E$.
 \item[(b)] If $z\in\partial E\setminus J(f)$, then $z$ lies in a
 Fatou component of $f$ which meets both $E$ and $E^c$; in particular,
 if every component of $F(f)$ that meets $E$ is contained in $E$,
 then $\partial E\subset J(f)$.
\end{itemize}
\end{lemma}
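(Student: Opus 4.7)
\medskip

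The plan for part~(a) is to combine the standard density statement about iterated preimages with the hypothesis on $\mathrm{int}\,E$. More precisely, since $f$ is a {\tmf}, the set of Picard exceptional values of~$f$ has at most two points, so the assumption that $E'$ has at least three points guarantees that $E'$ contains some non-exceptional point~$w$. A standard fact in iteration theory says that for such a point, the set $\bigcup_{n\ge 0}f^{-n}(w)$ is dense in $J(f)$. By the backwards invariance of~$E'$ under~$f$, this full backward orbit of~$w$ is contained in~$E'$, so $J(f)\subset \overline{E'}$, and hence also $J(f)\subset\overline{E}$. Now decompose $\overline{E'}=\mathrm{int}\,E'\sqcup \partial E'$ (and similarly for~$E$). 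Since $E'\subset E$ implies $\mathrm{int}\,E'\subset \mathrm{int}\,E$, the hypothesis $\mathrm{int}\,E\cap J(f)=\emptyset$ forces $J(f)\cap \mathrm{int}\,E'=\emptyset=J(f)\cap\mathrm{int}\,E$, and so $J(f)\subset\partial E'$ and $J(f)\subset\partial E$ as required.

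For part~(b), the argument is essentially topological. Suppose $z\in\partial E\setminus J(f)$ and let $U$ be the component of $F(f)$ containing~$z$. Then $U$ is an open neighbourhood of~$z$. Since $z\in\partial E=\overline{E}\cap\overline{E^c}$, every neighbourhood of~$z$ meets both $E$ and $E^c$; applying this to~$U$, we find points of $U\cap E$ and of $U\cap E^c$, so $U$ meets both $E$ and $E^c$. For the final implication, assume that every Fatou component meeting~$E$ is contained in~$E$, and take $z\in\partial E\setminus J(f)$; then the Fatou component~$U$ containing~$z$ meets~$E$, so $U\subset E$, contradicting the fact that $U$ meets~$E^c$. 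Thus $\partial E\subset J(f)$.

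There is no significant obstacle: the only subtle point is ensuring that the ``preimages are dense in~$J(f)$'' assertion can be applied, which is why the hypothesis $|E'|\ge 3$ is needed — it rules out the pathological case where~$E'$ lies entirely in the (at most two-point) Picard exceptional set of~$f$. Everything else is bookkeeping between closure, interior, and boundary, plus the elementary observation that Fatou components are open.
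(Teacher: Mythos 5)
Your proof of part~(b) is essentially identical to the paper's, so there is nothing to compare there. For part~(a), however, you take a genuinely different route. The paper applies Montel's theorem directly: since $E'$ is backwards invariant and $f$ is open, $f(\C\setminus\overline{E'})\subset\C\setminus\overline{E'}$, so the iterates $(f^n)$ on $\C\setminus\overline{E'}$ omit the three or more points of $E'$, hence form a normal family; therefore $\C\setminus\overline{E'}\subset F(f)$, i.e.\ $J(f)\subset\overline{E'}$. You instead invoke the density-of-preimages theorem (the blow-up property): pick a non-exceptional $w\in E'$, observe that $O^-(w)\subset E'$ by backwards invariance, and use $J(f)\subset\overline{O^-(w)}$. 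Both are correct. The paper's argument is the more self-contained, since it uses Montel's theorem directly, while yours reduces to a standard consequence of it; the trade-off is that your version makes the role of the cardinality hypothesis $|E'|\ge3$ transparent (it rules out $E'$ being contained in the small exceptional set), whereas in the paper's version the role of $|E'|\ge3$ is to supply the three omitted values required by Montel.

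One small point of terminology: what you need is that the \emph{dynamically} exceptional set $E(f)$, i.e.\ the set of points with finite backward orbit, has at most two elements; this is the set that governs density of preimages, not literally the Picard exceptional values of $f$ (although the cardinality bounds agree). The argument is unaffected, but the justification should refer to the dynamical exceptional set.
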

\begin{proof}
Since $E'$ is backwards invariant under~$f$ and $f$ is an open map,
we have $f(\C\setminus \overline{E'})\subset \C\setminus
\overline{E'}$. Thus $(f^n)$ forms a normal family in $\C\setminus
\overline{E'}$, by Montel's theorem, so $J(f) \subset
\overline{E'}\subset\overline{E}$. Since ${\rm int\,} E'\subset {\rm
int\,} E\subset F(f)$, we deduce that $J(f)\subset \partial E'$ and
$J(f)\subset \partial E$.

Part~(b) follows immediately from the fact that any open disc
centred at a point of $\partial E$ meets both $E$ and $E^c$.
\end{proof}
Proving that the hypotheses of~Lemma~\ref{Julesc}(a) hold for the
sets $L(f)$, $M(f)$ and $I^a(f)$ will be straightforward. However, to
apply Lemma~\ref{Julesc}(b) to these sets we must determine which Fatou components of
$f$ can meet them and also meet their complements. To do this, we use the
following distortion lemma, which is a combination
of~\cite[Lemma~7]{wB93} and~\cite[Theorem~1]{pR06}.
\begin{lemma}\label{distort}
Let $G$ be an unbounded open set in $\C$ such that $\partial G$
has at least two finite points, and let $f$ be analytic in $G$.
Let $D$ be a domain contained in $G$ such that $f^n(D)\subset G$,
for $n\ge 1$, and $f^n(z)\to\infty$ as $n\to\infty$, for $z\in D$.
\begin{itemize}
 \item[(a)] For any compact disc $\Delta\subset D$, there exist
$C>1$ and $n_0\in\N$ such that
\begin{equation}\label{weakdist}
 |f^n(z')|\le |f^n(z)|^C,\quad \text{for } z,z'\in \Delta,\;n\ge n_0.
 \end{equation}
 \item[(b)]If, in addition, one of the following holds:
\begin{itemize}
 \item[(i)] $\hat{\C}\setminus G$ contains an unbounded connected set, or
 \item[(ii)] $D=G$ and $f$ does not extend analytically to
 $\infty$,
\end{itemize}
then for any compact disc $\Delta\subset D$, there exist $C>1$ and
$n_0\in\N$ such that
\begin{equation}\label{strongdist}
 |f^n(z')|\le C|f^n(z)|,\quad \text{for } z,z'\in \Delta,\;n\ge n_0.
 \end{equation}
 \end{itemize}
\end{lemma}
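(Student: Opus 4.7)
The proof rests on comparing hyperbolic and Euclidean distances on $G$. Since $\partial G$ contains two finite points $a$ and $b$, the inclusion $G\subset \C\setminus\{a,b\}$ makes $G$ a hyperbolic domain, with hyperbolic density $\lambda_G$ and hyperbolic distance $d_G$. The condition $f^k(D)\subset G$ for $1\le k\le n$ ensures that $f^n: D\to G$ is a well-defined analytic map, so by Schwarz--Pick,
\[
d_G(f^n(z),f^n(z'))\le d_D(z,z'),\quad \text{for } z,z'\in D,\; n\ge 1.
\]
Since $\Delta$ is a compact disc in $D$, the hyperbolic diameter $M:=\sup_{z,z'\in\Delta} d_D(z,z')$ is finite, and hence $d_G(f^n(z),f^n(z'))\le M$ uniformly in $n$. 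Both parts of the lemma will follow by converting this uniform hyperbolic bound into a Euclidean bound via a suitable lower estimate for $\lambda_G(w)$ as $|w|\to\infty$.

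For part~(a), the domain monotonicity $\lambda_G\ge \lambda_{\C\setminus\{a,b\}}$ together with the classical estimate $\lambda_{\C\setminus\{a,b\}}(w)\ge c_0/(|w|\log|w|)$ for $|w|$ large yields a lower bound of the same form for $\lambda_G$. Since $|f^n(z)|,|f^n(z')|\to\infty$, integrating $\lambda_G|dw|$ along a hyperbolic geodesic of length at most $M$ from $f^n(z)$ to $f^n(z')$ gives
\[
c_0\bigl|\log\log|f^n(z')|-\log\log|f^n(z)|\bigr|\le M,\quad \text{for } n\ge n_0,
\]
which on exponentiating twice produces (\ref{weakdist}) with $C=e^{M/c_0}$.

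For part~(b) one requires the sharper bound $\lambda_G(w)\ge c_1/|w|$ for $|w|$ large; this integrates directly to
\[
\bigl|\log|f^n(z')|-\log|f^n(z)|\bigr|\le M/c_1,
\]
which yields (\ref{strongdist}). Under hypothesis~(i), an unbounded connected set in $\hat\C\setminus G$ may be assumed closed in $\hat\C$ and hence contains~$\infty$; applying the inversion $\phi(w)=1/w$ produces a connected component of $\hat\C\setminus \phi(G)$ containing~$0$, which is automatically uniformly perfect at~$0$. Pommerenke's estimate for hyperbolic domains with uniformly perfect complements then gives $\lambda_{\phi(G)}(\zeta)\ge c/|\zeta|$ near~$0$, and the conformal invariance of the hyperbolic metric, $\lambda_G(w)=\lambda_{\phi(G)}(1/w)/|w|^2$, pulls this back to $\lambda_G(w)\ge c/|w|$ near~$\infty$.

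The delicate case is~(ii), and I expect the main obstacle to lie here. Now $\hat\C\setminus G$ need contain no unbounded continuum, so the previous geometric route is unavailable, and the hypothesis $D=G$ together with the non-extendability of $f$ at~$\infty$ must play the role of the missing density bound. The approach, following Rippon's Theorem~1 in~\cite{pR06}, is by contradiction: if (\ref{strongdist}) failed there would exist sequences $(z_k),(z'_k)\subset\Delta$ and $n_k\to\infty$ with $|f^{n_k}(z'_k)|/|f^{n_k}(z_k)|\to\infty$ while $d_G(f^{n_k}(z_k),f^{n_k}(z'_k))\le M$, and a normal-families argument applied to suitable rescalings of $(f^{n_k})$ would extract a limit forcing an analytic extension of $f$ across~$\infty$, contradicting the assumption. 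I would invoke Rippon's theorem rather than reprove this dynamical extraction of geometric information.
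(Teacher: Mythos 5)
Your proposal is correct and follows essentially the same route as the paper's cited sources. The paper in fact offers no proof of this lemma: it states that it is ``a combination of'' Bergweiler's Lemma~7 in \cite{wB93} and Rippon's Theorem~1 in \cite{pR06}, and leaves it at that. Your reconstruction --- Schwarz--Pick on the iterates giving a uniform hyperbolic-distance bound, followed by a lower estimate of the hyperbolic density near $\infty$ of the form $c_0/(|w|\log|w|)$ (giving the exponent-$C$ bound after two exponentiations) resp.\ $c_1/|w|$ (giving the multiplicative bound), and deferring case (b)(ii) to Rippon's argument --- is precisely the strategy those two references use.

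Two small remarks. First, in part~(a) one must rule out the geodesic from $f^n(z)$ to $f^n(z')$ dipping back into a fixed bounded region where the density estimate is unavailable; this is fine because the $d_G$-distance from $\{|w|\ge R\}$ to $\{|w|\le R_0\}$ tends to $\infty$ with $R$, so for $n$ large a geodesic of $d_G$-length at most $M$ stays in the far region --- worth saying explicitly. Second, in part~(b)(i) the detour through uniform perfectness of the inverted complement is more than is needed and the phrasing ``uniformly perfect at $0$'' is nonstandard: since $\hat\C\setminus G$ is closed in $\hat\C$ and contains an unbounded connected set, its closure is a continuum through $\infty$, so every component of the complement of that continuum in $\C$ is simply connected, and Koebe's theorem together with $\operatorname{dist}(w,K)\le|w|+O(1)$ directly yields $\lambda_G(w)\ge c_1/|w|$ for large $|w|$; no appeal to Pommerenke's uniform-perfectness estimate is required. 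Neither point affects the correctness of the argument.
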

We also need the following topological lemma. Here, for a set
$E\subset \C$, the set $\wt{E}$ denotes the union of $E$ and its
bounded complementary components.
\begin{lemma}\label{fill}
Let $V\subset \C$ be an open set with components $V_n$, $n\in
{\mathcal N}$, where $\mathcal N\subset \N$. For $n\in \mathcal N$,
put
\begin{equation}\label{omega}
\Omega_{V_n}=\bigcup \{\wt{V_m}: m\in\mathcal N, m\ne n,
\wt{V_n}\subset \wt{V_m}\}.
\end{equation}
Then the following cases can arise:
\begin{itemize}
 \item[(1)] $\Omega_{V_N}=\C=\wt{V_N}$ for some $N\in \mathcal N$;
 \item[(2)] $\Omega_{V_N}=\C$ for some $N\in \mathcal N$ but each
 $V_n$, $n\in\mathcal N$, is bounded, so
 there is a sequence $(n_j)$ in $\mathcal N$ such that
 \[ \wt{V_{n_1}}\subset \wt{V_{n_2}}\subset\cdots \quad\text{and}\quad \bigcup_{j \ge 1}
 \wt{V_{n_j}}=\C;\]
 \item[(3)] $\Omega_{V_N}$ is unbounded for some $N\in \mathcal N$ and $\Omega_{V_N}\ne
 \C$;
 \item[(4)] $\Omega_{V_n}$ is bounded for all $n\in \mathcal N$.
\end{itemize}
\end{lemma}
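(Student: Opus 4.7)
\textbf{Proof plan for Lemma~\ref{fill}.}

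The plan is to carry out a case analysis based on the sizes of the sets $\Omega_{V_n}$, underpinned by one planar-topology observation: for distinct components $V_m, V_{m'}$ of the open set $V$, the fillings $\wt{V_m}$ and $\wt{V_{m'}}$ are either disjoint or nested. Indeed, $\wt{V_m}$ is open, being the complement in $\C$ of the unbounded component $U_m$ of the closed set $\C\setminus V_m$, which is itself closed as a component of a closed set. Moreover $V_{m'}$, being connected and disjoint from $V_m$, lies in a single component of $\C\setminus V_m$; if that component is bounded, then it sits inside $\wt{V_m}$, and one verifies that the bounded complementary components of $V_{m'}$ do too, so $\wt{V_{m'}}\subset\wt{V_m}$. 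A symmetric analysis, together with the case in which each of $V_m, V_{m'}$ lies in the unbounded complementary component of the other (which is what forces $\wt{V_m}\cap\wt{V_{m'}}=\emptyset$), exhausts the possibilities.

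With this in hand, the case analysis runs as follows. If $\Omega_{V_n}$ is bounded for every $n\in\mathcal{N}$, we are in case~(4). Otherwise pick $N$ with $\Omega_{V_N}$ unbounded; if $\Omega_{V_N}\ne\C$, we are in case~(3). The remaining possibility is $\Omega_{V_N}=\C$. Here the nested-or-disjoint observation is decisive: any two members of
\[
\mathcal{F}=\{\wt{V_m}:m\in\mathcal{N},\;m\neq N,\;\wt{V_N}\subset\wt{V_m}\}
\]
both contain $\wt{V_N}$, so they meet and are therefore nested, making $\mathcal{F}$ totally ordered by inclusion. Since $\bigcup\mathcal{F}=\C$, each $\wt{V_m}$ is open, and $\C$ is Lindel\"of, there is a countable subcover of $\mathcal{F}$, which the total order lets me rearrange as a chain $\wt{V_{n_1}}\subset\wt{V_{n_2}}\subset\cdots$ whose union equals $\C$.

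If some $\wt{V_{n_J}}=\C$, we are in case~(1); otherwise, after thinning, the chain is strictly increasing with union $\C$, and I rule out the existence of any unbounded component $V_M$ as follows. Being connected and disjoint from each $V_{n_j}$, the set $V_M$ lies in a single complementary component of $V_{n_j}$; a bounded component would force $V_M$ to be bounded, so $V_M$ must lie in the unbounded one, giving $V_M\cap\wt{V_{n_j}}=\emptyset$ for every $j$, which contradicts $\bigcup_j\wt{V_{n_j}}=\C$. Thus every $V_n$ is bounded and we obtain case~(2). The step I expect to be the main obstacle is the foundational nested-or-disjoint statement: while intuitively clear, ruling out subtle configurations (for instance, situations in which $V_m$ and $V_{m'}$ share boundary) requires care with plane topology. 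Everything else is a routine combination of a case analysis and a Lindel\"of sub-cover argument.
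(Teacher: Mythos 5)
The paper does not prove Lemma~\ref{fill} itself: it cites Lemma~3 of~\cite{pR92} and remarks that the proof given there carries over verbatim to arbitrary open $V$, so there is no in-paper argument to compare against. Judged on its own, your proposal is an essentially correct, self-contained argument, and its central nested-or-disjoint observation for fillings of disjoint connected open sets is exactly the right structural fact. It is perhaps cleanest to verify it by noting that $\hat{\C}\setminus\wt{V_m}$ is the connected component of $\hat{\C}\setminus V_m$ containing $\infty$; consequently a connected unbounded set disjoint from $V_m$ is disjoint from $\wt{V_m}$, while a connected set lying in a bounded complementary component of $V_m$ has filling contained in $\wt{V_m}$. (Your phrasing ``the complement of the unbounded component $U_m$ of $\C\setminus V_m$'' overlooks that $\C\setminus V_m$ may have several unbounded components, as for a strip; passing to $\hat{\C}$ avoids this.)

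Two points to tighten. First, in the branch $\Omega_{V_N}=\C$, the conclusion ``if some $\wt{V_{n_J}}=\C$ we are in case~(1)'' does not match the printed statement literally: two disjoint components cannot both have filling $\C$ (if $\wt{V_m}=\C$ then every other component lies in a bounded complementary component of $V_m$, hence is bounded and has proper filling), so $\wt{V_{n_J}}=\C$ forces $\Omega_{V_{n_J}}=\emptyset$, and the conjunction $\Omega_{V_{n_J}}=\C=\wt{V_{n_J}}$ at a single index never occurs. What your argument actually yields, and what is used in the proof of Theorem~\ref{Fatcomp}, is that $\wt{V_M}=\C$ for some $M$ together with $\Omega_{V_N}=\C$ at the original $N$; you should state this explicitly rather than matching case~(1) word for word, since the imprecision lies in the lemma's formulation rather than in your argument. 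Second, in ruling out unbounded components in case~(2) you take $V_M$ disjoint from \emph{every} $V_{n_j}$; if $M=n_{j_0}$ this fails at $j=j_0$, but the same argument applied to $j>j_0$ plus cofinality of the chain still gives the contradiction, so note this. (Also, the Lindel\"of appeal is superfluous: $\mathcal N\subset\N$ is already countable.)
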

This lemma can be found in~\cite[Lemma~3]{pR92}. There the proof is
given for the case when $V=\{z:u(z)>M\}$, where $u$ is a real-valued,
continuous function defined in $\C$ and $M\in \R$, but this proof
applies without change when~$V$ is an arbitrary open set. Note that
each set $\Omega_{V_n}$ is {\it full\,}; that is, it has no bounded
complementary components.

Using Lemmas~\ref{distort} and~\ref{fill} we obtain the following
result about the behaviour of the iterates of a {\tmf} in its
escaping Fatou components, which may be of independent interest.
Later in the section, we give examples to show that
the condition that $U$ is not a {\pwd} is necessary in this result.
\begin{theorem}\label{Fatcomp}
Let $f$ be a {\tmf} and let $U$ be a component of $F(f)$ which is
contained in $I(f)$. If $U$ is not a {\pwd}, then the estimate~(\ref{strongdist})
holds for any compact disc $\Delta\subset U$.
\end{theorem}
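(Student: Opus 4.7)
I aim to apply Lemma~\ref{distort}(b)(i) with $D=U$ and
\[
G := \bigcup_{n\ge 0} U_n.
\]
Each $U_n$ is a Fatou component, hence contains no poles, so $G$ is open, $f$ is analytic in $G$, $f^n(U)\subset U_n\subset G$ for every~$n$, and $\partial G\subset J(f)$ has at least two finite points; also $f^n(z)\to\infty$ for $z\in U$ because $U\subset I(f)$, which in particular forces $G$ to be unbounded. The estimate~(\ref{strongdist}) for every compact disc $\Delta\subset U$ will then follow once I show that $\hat{\C}\setminus G$ contains an unbounded connected set.

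To analyze the components of $G$, I would apply Lemma~\ref{fill} to $V:=G$; its components are the distinct Fatou components among $\{U_n\}_{n\ge 0}$. The plan is to rule out cases~(1) and~(2) of that lemma, since these are precisely the cases in which $G$ would ``fill the plane up to bounded holes.''

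Case~(1) would require two distinct components whose filled-in versions both equal $\C$; however, any Fatou component disjoint from $V_N$ must lie in some complementary component of $V_N$, and if $\wt{V_N}=\C$ every such complementary component is bounded, forcing any other $V_m$ to be bounded and hence to satisfy $\wt{V_m}\ne\C$. In case~(2) one obtains a nested sequence $\wt{V_{n_1}}\subset\wt{V_{n_2}}\subset\cdots$ of bounded components with $\bigcup_j\wt{V_{n_j}}=\C$; since these are infinitely many distinct Fatou components in the orbit of $U$, $U$ must be a wandering domain, and the corresponding subsequence $(U_{m_j})$ consists of bounded components surrounding~$0$ for large~$j$ and tending to~$\infty$, making $U$ a {\pwd}, contrary to hypothesis.

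Being in case~(3) or~(4), we have $\Omega_{V_N}\ne\C$ for every component $V_N$ of $G$; since $\Omega_{V_N}$ is full, $\C\setminus\Omega_{V_N}$ is a single unbounded connected closed set. The main obstacle is that $\C\setminus\Omega_{V_N}$ may still meet components $V_m$ of $V$ which do not surround $V_N$, so extracting from this an unbounded connected subset of $\hat{\C}\setminus G$ will require a more delicate planar-topology argument, exploiting the non-{\pwd} hypothesis globally across the whole orbit rather than just at the particular component $V_N=U$. I expect this topological step to be the principal difficulty of the proof.
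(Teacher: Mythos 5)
Your framework coincides with the paper's: apply Lemma~\ref{distort}(b) with $G$ the union of the forward components $U_n$, classify the topology of $G$ via Lemma~\ref{fill}, and identify case~(2) as exactly the {\pwd} situation. But the proposal stops where the real work begins. In cases~(3) and~(4) you correctly flag what must still be shown — an unbounded connected subset of $\hat{\C}\setminus G$ — and then declare it ``the principal difficulty'' without attempting it. That step is the substance of the theorem; leaving it open is a genuine gap, not a deferral of routine details.

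The missing idea is to pass from $\C\setminus\Omega_{U_N}$ to $\partial\Omega_{U_N}$. The key point is that $\partial\Omega_{U_N}$ is disjoint from $G$: for each $m$, if $U_m$ meets any $\wt{U_{m'}}$ occurring in the union defining $\Omega_{U_N}$, then --- since distinct Fatou components are disjoint --- either $U_m=U_{m'}$ or $U_m$ lies in a bounded complementary component of $U_{m'}$, and in either case $U_m\subset\wt{U_{m'}}\subset\Omega_{U_N}$, so $U_m\subset\mathrm{int}\,\Omega_{U_N}$; otherwise $U_m$ avoids $\overline{\Omega_{U_N}}$ altogether. Hence no $U_m$ straddles $\partial\Omega_{U_N}$. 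Since $\Omega_{U_N}$ is connected (each $\wt{U_{m'}}$ in the defining union contains $\wt{U_N}$), full, unbounded, and $\ne\C$, its boundary has an unbounded component in $\hat{\C}\setminus G$, which is exactly what Lemma~\ref{distort}(b)(i) needs in case~(3). In case~(4) the paper instead uses the connected, unbounded set $\C\setminus\bigcup_n\Omega_{U_n}$, which is disjoint from $G$. Finally, your dismissal of case~(1) only rules out two disjoint components both having $\wt{\,\cdot\,}=\C$; the configuration the paper actually confronts there is $\wt{U_N}=\C$ for a single $N$, where $\hat{\C}\setminus G$ contains no unbounded connected set at all for your choice of $G$. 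The paper deals with that separately: if $U_N$ is periodic it is an invariant Baker domain and Lemma~\ref{distort}(b)(ii) applies with $G=U_N$; otherwise all $U_n$, $n>N$, lie in bounded complementary components of $U_N$ and one applies Lemma~\ref{distort}(b)(i) with $G=\bigcup_{n>N}U_n$, for which $U_N\subset\hat{\C}\setminus G$ is the required unbounded connected set.
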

\begin{proof}
For $n\in\N$, let $U_n$ be the Fatou component of $f$ such that
$f^n(U)\subset U_n$. By Lemma~\ref{fill}, applied to the open set
$V=\bigcup_{n\in\N}U_n$, the following cases can arise:
\begin{itemize}
 \item[(1)] $\Omega_{U_N}=\C=\wt{U_N}$ for some $N\in\N$;
 \item[(2)] $\Omega_{U_N}=\C$ for some $N\in\N$ but each $U_n$, $n\in\N$, is bounded, so
 there is a sequence $(n_j)$ in $\N$ such that
 \[ \wt{U_{n_1}}\subset \wt{U_{n_2}}\subset\cdots \quad\text{and}\quad \bigcup_{j \ge 1}
 \wt{U_{n_j}}=\C;\]
 \item[(3)] $\Omega_{U_N}$ is unbounded for some $N\in \N$ and $\Omega_{U_N}\ne
 \C$;
 \item[(4)] $\Omega_{U_n}$ is bounded for all $n\in \N$.
\end{itemize}
In case~(1), either $U_N$ is periodic, in which case $U_N$ is an invariant Baker
domain because $U_N\subset I(f)$, or all the Fatou components $U_n$, $n>N$ are
contained in bounded complementary components of $U_N$. If $U_N$ is an invariant
Baker domain, then Lemma~\ref{distort}(b)(ii) can be applied to $f$ in $G=U_N$
and hence~(\ref{strongdist}) holds for any compact disc $\Delta\subset
U$. On the other hand, if all the Fatou components $U_n$, $n>N$, are contained
in bounded complementary components of $U_N$, then Lemma~\ref{distort}(b)(i)
can be applied to $f$ in the open set $G=\bigcup_{ n>N}U_n$ and hence~(\ref{strongdist})
holds for any compact disc $\Delta\subset U$.

In case~(3), the boundary of $\Omega_{U_N}$ has an unbounded  component which
is contained in an unbounded complementary component of the set
\begin{equation}\label{G}
G=\bigcup_{n \in\N} U_n.
\end{equation}
Since $G$ is invariant under $f$, we can apply
Lemma~\ref{distort}(b)(i) to $f$ in $G$ with $D=U_1$ to deduce that~(\ref{strongdist})
holds for any compact disc $\Delta\subset U$.

In case~(4), the complement of the set
\[\bigcup_{n \in \N} \Omega_{U_n}\]
is connected and unbounded and lies in the complement of the
set $G$ defined in~(\ref{G}), so we can apply Lemma~\ref{distort}(b)(i) again 
to $f$ in $G$ with $D=U_1$.

Thus the estimate~(\ref{strongdist}) holds for any compact disc
$\Delta\subset U$ unless case~(2) holds. In this case $U$ is a
{\pwd}, so the result follows.
\end{proof}

We now use Theorem~\ref{Fatcomp} and Lemma~\ref{distort} to prove the
following result. This indicates when Lemma~\ref{Julesc}(b) can be applied 
if $E$ is $L(f)$, $M(f)$ or $I^a(f)$.
\begin{lemma}\label{lemma8b}
Let $f$ be a {\tmf}, let $U$ be a component of $F(f)$ and let
$a=(a_n)$ be a positive sequence with $a_n\to\infty$ as $n\to\infty$.
\begin{itemize}
 \item[(a)]If $U\cap L(f)\ne \emptyset$, then $U\subset L(f)$.
 \item[(b)]If $U\cap M(f)\ne \emptyset$, then $U\subset M(f)$.
 \item[(c)]If $U\cap I^a(f)\ne \emptyset$ and $U$ is not a {\pwd}, then $U\subset I^a(f)$.
\end{itemize}
\end{lemma}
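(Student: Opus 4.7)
The plan is to reduce each part to a distortion estimate along $U$. First, observe that in every case $U\subset I(f)$: picking $z_0\in U$ belonging to the relevant set, the sequence $(f^n)$ is normal on $U$, and since every limit function of $(f^n)$ on a Fatou component of a {\tmf} is constant, the fact that $f^n(z_0)\to\infty$ forces every such limit to equal $\infty$. Hence $f^n\to\infty$ locally uniformly on $U$, so every point of $U$ lies in $I(f)$. It now suffices to show that the growth condition defining $L(f)$, $M(f)$, or $I^a(f)$ is transferred from $z_0$ to an arbitrary $z\in U$.

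For parts~(a) and~(b) I would apply Lemma~\ref{distort}(a) with $D=U$ and $G=\bigcup_{n\in\N}U_n$. The hypotheses are easily checked: $G\subset F(f)$ contains no pre-poles of~$f$, so $f$ is analytic on $G$; $G$ is unbounded since $|f^n(z_0)|\to\infty$ with $f^n(z_0)\in U_n$; and $\partial G$ contains infinitely many finite points because $J(f)$ is a perfect subset of $\C\setminus G$. Given $z\in U$, connect $z_0$ to $z$ by a compact polygonal arc inside $U$, cover it by finitely many compact discs contained in $U$, and chain the weak estimate~(\ref{weakdist}) across successive discs at their overlap points. This yields constants $C>1$ and $n_0\in\N$ with
\[
|f^n(z)|\le |f^n(z_0)|^C\quad\text{for }n\ge n_0.
\]
For~(a), $\limsup_{n\to\infty}\frac1n\log|f^n(z_0)|<\infty$ then forces $\limsup_{n\to\infty}\frac1n\log|f^n(z)|<\infty$, so $z\in L(f)$. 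For~(b), the same chained estimate gives $\log\log|f^n(z)|\le \log C+\log\log|f^n(z_0)|$ eventually, so $z\in M(f)$. The exponential factor $|f^n(z_0)|^C$ is harmless here precisely because the defining conditions of $L(f)$ and $M(f)$ are scale-invariant at the relevant logarithmic level.

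For~(c) the weak estimate is no longer enough: I need a multiplicative, not exponential, distortion to preserve the $O(a_n)$ bound. This is exactly the content of Theorem~\ref{Fatcomp}, whose hypothesis that $U$ is not a {\pwd} is supplied by assumption. Applying the strong distortion~(\ref{strongdist}) on compact discs in $U$ and chaining as before over a finite disc cover of an arc joining $z_0$ to $z$, I obtain $C>1$ and $n_0\in\N$ with $|f^n(z)|\le C|f^n(z_0)|$ for $n\ge n_0$; combined with $|f^n(z_0)|=O(a_n)$ this yields $|f^n(z)|=O(a_n)$, so $z\in I^a(f)$. There is no real mathematical obstacle here beyond pointing at the right tool: the {\pwd} hypothesis is indispensable because case~(2) of Lemma~\ref{fill} in the proof of Theorem~\ref{Fatcomp} is exactly where the multiplicative distortion can break down.
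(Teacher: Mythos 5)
Your proposal is correct and follows essentially the same route as the paper: apply the weak distortion estimate~(\ref{weakdist}) from Lemma~\ref{distort}(a) for parts~(a) and~(b), and the strong distortion~(\ref{strongdist}) via Theorem~\ref{Fatcomp} for part~(c). You merely make explicit two steps the paper leaves implicit — the choice of $D=U$ and $G=\bigcup_{n\geq 0}U_n$ in Lemma~\ref{distort}, and the disc-chaining argument needed to propagate the estimate from one point to all of $U$.
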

\begin{proof}
First recall that if $U\cap I(f)\ne\emptyset$, then $U\subset I(f)$.

To prove part~(a), take $z\in U\cap L(f)$. Then
\[\limsup_{n\to\infty} \frac1n\log|f^n(z)|<\infty.\]
Let $\Delta$ be any compact disc in $U$ with centre~$z$. Then,
by~(\ref{weakdist}), there exists $C>1$ such that
\[\limsup_{n\to\infty} \frac1n\log|f^n(z')|\le
C\limsup_{n\to\infty} \frac 1n\log|f^n(z)|<\infty,\quad \text{for
} z'\in\Delta,\] so $\Delta\subset L(f)$. Hence $U\subset L(f)$. A
similar argument applies to $M(f)$.

To prove part~(c), take $z\in U\cap I^a(f)$. Then
\begin{equation}\label{Kest}
|f^n(z)|=O(a_n)\;\text{ as }n\to\infty.
\end{equation}
Since $U$ is not a {\pwd} we deduce by Theorem~\ref{Fatcomp} that if
$\Delta$ is any compact disc in $U$ with centre $z$, then there exist
$C>1$ and $n_0\in\N$ such that
\[|f^n(z')|\le C|f^n(z)|,\quad \text{for } z'\in\Delta,\; n\ge n_0,\]
so $\Delta\subset I^a(f)$ by~(\ref{Kest}). Hence $U\subset
I^a(f)$, as required.
\end{proof}

\begin{proof}[Proof of Theorem~\ref{main2}]
Let $a=(a_n)$ be a positive sequence such that $a_n\to \infty$ as
$n\to\infty$. We consider a positive sequence $a'=(a'_n)$ such that
\[a'_n\le
a_n\;\;\text{and}\;\; a'_n\;\text{is increasing}, \;\;\text{for }n\in
\N,\quad\text{and}\quad a'_n\to\infty\;\text{as }n\to\infty.\] It is
clear from the definitions that $I^{a'}(f)\subset I^a(f)$, that
$I^{a'}(f)$ is backwards invariant under $f$, and that each of the
sets $L(f)$ and $M(f)$ is completely invariant under $f$. Also, each
of the sets
\[
L(f)\cap J(f), \; M(f)\cap J(f)\;\;\text{and}  \;\;I^{a'}(f)\cap
J(f)
\]
is non-empty, by Theorem~\ref{main1}.

Therefore
\begin{itemize}
 \item each of $L(f)\cap J(f)$, $M(f)\cap J(f)$ and  $I^{a'}(f)\cap J(f)$
 is a non-empty backwards invariant subset of $I(f)\cap J(f)$
 and so is infinite, since for every $\zeta$ at least one of $\zeta, f(\zeta)$ or $f^2(\zeta)$ is
 not Fatou-exceptional and so has an infinite backwards orbit;
 \item each of $L(f)$, $M(f)$ and $I^a(f)$ contains no periodic points, and so has no interior points which
 lie in $J(f)$.
\end{itemize}
Thus, by Lemma~\ref{Julesc}(a), $L(f)$, $M(f)$ and $I^a(f)$ are each
dense in $J(f)$, and
\begin{equation}\label{Julsub}
J(f)\subset \partial L(f),\quad J(f)\subset \partial M(f) \;\;
\text{and}\;\; J(f)\subset \partial I^a(f).
\end{equation}

By Lemma~\ref{lemma8b}(a) and~(b), and Lemma~\ref{Julesc}(b), we
have
\[
\partial L(f)\subset J(f) \;\;\text{and}\;\;\partial M(f)\subset J(f),
\quad\text{so}\quad J(f)=\partial L(f)\;\;\text{and}\;\;
J(f)=\partial M(f).
\]
This proves part~(c).

Part~(d) follows immediately from~(\ref{Julsub}),
Lemma~\ref{lemma8b}(c) and Lemma~\ref{Julesc}(b). This completes
the proof of Theorem~\ref{main2}.
\end{proof}

We now show that we cannot strengthen the statement of
Theorem~\ref{main2}(d) or Corollary~\ref{cor1}(a) to assert that we
always have $J(f)=\partial I^a(f)$. More precisely, we show that for
a {\mf} $f$ with a finite number of poles, if $U$ is a {\bwd}, then
there exists a sequence $(a_n)$ for which $\partial I^a(f)\cap
U\ne\emptyset$ and hence $\partial I^a(f)\not\subset J(f)$. (Recall
that for such a {\mf}, any {\pwd} is a {\bwd}.) This theorem also 
shows that the conclusion of Theorem~\ref{Fatcomp} fails if $U$ is a {\bwd} of a {\tmffin}. 

\begin{theorem}\label{bwdexample}
Let $f$ be a {\tmffin} and with a {\bwd} $U$. Then there exist points $z_0,z'_0\in U$ such that
\[
\left|\frac{f^n(z'_0)}{f^n(z_0)}\right|\to\infty\;\;\text{as }n\to\infty.
\]
Thus, if $a_n=|f^n(z_0)|$, $n\in\N$, then $z_0\in I^a(f)$ but
$z'_0\notin I^a(f)$, so $\partial I^a(f)\cap U\ne \emptyset$.
\end{theorem}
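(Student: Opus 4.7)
The plan is to invoke Lemma~\ref{Zheng}, which furnishes large annuli inside the orbit images $f^n(U)$. First I would pick a doubly connected subdomain $A\subset U$ with $\overline{A}\subset U$ such that each $f^n(A)$ contains a closed curve that is not null-homotopic in $U_n$. This can be arranged by using that $U$ is a {\bwd}: for some large $N$ the component $U_N$ is bounded and surrounds~$0$, so taking $A$ to be a small annular neighbourhood in $U$ of a component of $(f^N|_U)^{-1}$ applied to a Jordan curve $\gamma\subset U_N$ around~$0$ does the job. Lemma~\ref{Zheng} then produces annuli $A_n=\{r_n<|z|<R_n\}\subset f^n(A)$ with $\mathrm{dist}(0,A_n)\to\infty$ and $R_n/r_n\to\infty$.

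Since $\overline{A}$ is compact and $f^n(\overline{A})\supset \overline{A_n}$, for each large $n$ I can pick $z_n,z'_n\in\overline{A}$ with $|f^n(z_n)|=r_n$ and $|f^n(z'_n)|=R_n$, so that $|f^n(z'_n)/f^n(z_n)|=R_n/r_n\to\infty$. By sequential compactness of $\overline{A}\subset U$, some subsequence $(n_k)$ satisfies $z_{n_k}\to z_0$ and $z'_{n_k}\to z'_0$ with $z_0,z'_0\in\overline{A}\subset U$.

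The next step is to transfer the divergence from the moving points $z_n,z'_n$ to the fixed limits via Lemma~\ref{distort}(a): on fixed compact discs in $U$ about $z_0$ and $z'_0$ there is $C>1$ with $|f^{n_k}(z_0)|\le r_{n_k}^{C}$ and $|f^{n_k}(z'_0)|\ge R_{n_k}^{1/C}$ for $k$ large, so
\[
 \left|\frac{f^{n_k}(z'_0)}{f^{n_k}(z_0)}\right|\ge \frac{R_{n_k}^{1/C}}{r_{n_k}^{C}}.
\]
The super-exponential escape characteristic of a {\bwd} forces $\log R_n/\log r_n\to\infty$ (by iterating the Zheng construction, or by using that $\overline{A}\subset I(f)$ escapes extremely rapidly), and so the right-hand side tends to $\infty$ along $(n_k)$; in particular the ratio $|f^n(z'_0)/f^n(z_0)|$ is unbounded in $n$. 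The main obstacle is strengthening this to the claimed full limit $\to\infty$ rather than just unboundedness; this presumably requires either a sharper aspect-ratio estimate than Lemma~\ref{Zheng} or a nested-ball refinement of the selection of $z_0,z'_0$ that pins the orbit of $z_0$ (respectively $z'_0$) to the inner (respectively outer) boundary of $A_n$ for all sufficiently large~$n$.

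The final assertion is then a one-line topological observation. Setting $a_n:=|f^n(z_0)|$ gives $z_0\in I^a(f)$ tautologically, while unboundedness of $|f^n(z'_0)|/a_n$ forces $z'_0\notin I^a(f)$. Hence $U$ is a connected open set meeting both $I^a(f)$ and its complement, so $U$ cannot be split between $\mathrm{int}\,I^a(f)$ and $\mathrm{int}\,(I^a(f))^c$, and therefore $U\cap \partial I^a(f)\ne\emptyset$.
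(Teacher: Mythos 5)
Your proposal uses the same key ingredient (Lemma~\ref{Zheng}) as the paper, but in a different way, and the argument has two genuine gaps which you partly acknowledge. First, your reduction needs $\log R_n/\log r_n\to\infty$, not merely $R_n/r_n\to\infty$, since after the distortion lemma you are comparing $R_n^{1/C}$ with $r_n^{C}$ for some $C>1$; the lemma only supplies $R_n/r_n\to\infty$, and the appeal to ``super-exponential escape'' or ``iterating the Zheng construction'' is not a proof --- indeed, taming the exponent loss from Lemma~\ref{distort}(a) requires genuine additional information about the aspect ratio of $f^n(U)$ that is not readily available. Second, even granting $\log R_n/\log r_n\to\infty$, the compactness step only pins $z_0,z'_0$ as limits along a subsequence $(n_k)$, so your estimate gives the ratio $\to\infty$ along $(n_k)$ only; the theorem claims the full limit. (The final topological assertion about $\partial I^a(f)$ does survive with just a $\limsup$, but the first claim of the theorem does not.)

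The paper's proof avoids both difficulties by not trying to track the geometry of the moving annuli $A_n$ at all. Instead it exploits the hypothesis that $f$ has finitely many poles (and hence a direct tract) to construct, via Eremenko's method, a point $z'_0\in f^N(U)$ with $|f^{n+1}(z'_0)|\ge\tfrac12 M(|f^n(z'_0)|,f)$ for all $n\ge0$, and compares it against $z_0$ on the inner circle $|z_0|=r_N$, for which $|f^n(z_0)|\le M^n(r_N,f)$ trivially. The Hadamard convexity estimate $M(r',f)/M(r,f)\ge (r'/r)^2$ then yields the explicit doubly-exponential gap $|f^n(z'_0)|/|f^n(z_0)|\ge 2(\rho_N/r_N)^{2^n}$, giving the full limit cleanly. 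Zheng's lemma enters only to guarantee that $f^N(U)$ contains an annulus wide enough to house both $z_0$ and the Eremenko point $z'_0$. So the paper's comparison is between a fast-escaping point and a slowly-escaping point chosen once and for all, rather than between boundary points tracked through the iteration; this sidesteps both the aspect-ratio estimate and the subsequence issue. If you want to salvage your route, you would need a quantitative replacement for Lemma~\ref{Zheng} that controls $\log R_n/\log r_n$, together with a stability argument to pass from the subsequence to all $n$; that is substantially harder than the paper's direct construction.
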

\begin{proof}
First note that it is sufficient to construct the required points
$z_0$ and $z'_0$ in $f^N(U)$ where $N\ge 0$.

Since $f$ has a finite number of poles, it has a direct tract. Thus, by 
using a method of Eremenko (see~\cite[Theorem~1]{E}
and~\cite[proof of Theorem~3.1]{BRS08}), we can show that in any
annulus $A(\frac12 r,2r)$, where~$r$ is large enough, there exists
$z'_0$ such that
\begin{equation}\label{Epoint}
|f^{n+1}(z'_0)|\ge \frac12M(|f^{n}(z'_0)|,f),\quad\text{for
}n\ge 0.
\end{equation}
Next, by Lemma~\ref{Zheng}, there exists a sequence of
annuli $A(r_n,R_n)$, where $0<r_n<R_n$, such that
$r_n\to\infty$ as $n\to\infty$ and $R_n/r_n\to\infty$ as $n\to\infty$, and for $n$
large enough
\begin{equation}\label{annulus}
\overline{A(r_n,R_n)}\subset f^{n}(U).
\end{equation}
Thus we can choose $N\in\N$ so large that, with
$\rho_N=\sqrt{r_NR_N}$,
\[
\text{there exists }z'_0\in A(2\rho_N,R_N)\subset f^N(U) \text{ such
that~(\ref{Epoint}) holds, }
\]
\[
\text{(\ref{annulus}) holds for } n\ge N,
\]
and also, for all $r'>r\ge r_N$,
\begin{equation}\label{Mgrowth1}
M(r,f)>r
\end{equation}
and
\begin{equation}\label{Mgrowth2}
\frac{M(r',f)}{M(r,f)}\ge \left(\frac{r'}{r}\right)^2;\quad
\text{in particular,}\quad M(2r,f)\ge 4M(r,f).
\end{equation}
The estimate~(\ref{Mgrowth2}) is a special case of
Lemma~\ref{Had1}, or it follows directly from the convexity of
$\log M(r,f)$ with respect to~$\log r$.

Using~(\ref{Epoint}), the fact that $|z'_0|>2\rho_N$, and the second
estimate in~(\ref{Mgrowth2}), we deduce by induction that
\[|f^n(z'_0)|\ge 2M^n(\rho_N,f),\quad\text{for }n\in \N.\]
On the other hand, if $|z_0|=r_N$, then $z_0\in f^N(U)$ and
\[|f^n(z_0)|\le M^n(r_N,f),\quad\text{for }n\in \N.\]
Hence, by~(\ref{Mgrowth1}) and the first estimate
in~(\ref{Mgrowth2}), we deduce by induction that
\[
\frac{|f^n(z'_0)|}{|f^n(z_0)|}\ge
\frac{2M^n(\rho_N,f)}{M^n(r_N,f)}\ge
2\left(\frac{\rho_N}{r_N}\right)^{2^n}\to\infty\;\;\text{as
}n\to\infty,
\]
as required.
\end{proof}

In Theorem~\ref{bwdexample} the sequence $a_n=|f^n(z_0)|$ tends to
$\infty$ quickly; for example, $z_0\in Z(f)$. Our next example shows
that \begin{itemize}
 \item
 in the absence of a direct tract, $J(f)=\partial I^a(f)$ need not hold
 even if $a_n$ tends to $\infty$ at a much
 slower rate;
 \item  in Corollary~\ref{cor1}(a), the assumption about the existence of a direct tract cannot be
 omitted.
 \end{itemize}
This example also shows that the conclusion of Theorem~\ref{Fatcomp} can fail for a
{\pwd} $U$ that is not a {\bwd}.

\begin{example}\label{ex1}
There is a positive increasing sequence $a=(a_n)$ such that
$a_n\to\infty$ as $n\to\infty$ and $a_{n+1}=O(a_n)$ as $n\to\infty$,
and a {\tmf} $f$ with a Fatou component $U$ containing $z_0$ and
$z'_0$ such that
\begin{itemize}
 \item[(a)] $z_0\in I^a(f)$ but $z'_0\not\in I^a(f)$,
 so $\partial I^a(f)\cap U\ne \emptyset$;
 \item[(b)] $\limsup_{n\to\infty}\left|f^n(z'_0)/f^n(z_0)\right|=\infty$;
 \item[(c)] $U$ is a {\pwd} but not a {\bwd}.
\end{itemize}
\end{example}
\begin{proof}
We take $a,b,z_0$ and $z'_0$ such that
\begin{equation}\label{start}
1<a<z_0<z'_0<b,\quad a^{2}>4 b,\quad (z_0+1)^4> b^3
\quad\text{and}\quad z'_0>z_0+3.
\end{equation}
The sequence $(a_n)$ is defined as $a_n=b^{n+1}$, \;$n\ge 0$.

To construct $f$, we define a sequence of closed annuli $B_n$ as
follows. Here we use the notation
\[B(z;r,R)=\{w:r\le |w-z|\le R\},\quad\text{for } z\in\C,\;0<r<R.\]
First we choose a sequence $(n_k)$ in $\{0,1,2,\ldots\}$ such that
\begin{equation}\label{bineq}
b^{n_k}< (z_0+1)^{2^k}\le b^{n_k+1},\quad \text{for } k\ge 0.
\end{equation}
Then $n_0=0$, $n_1=1$ and $n_2= 3$, by~(\ref{start}). Also,
\begin{equation}\label{nprop}
n_{k+1}+1> 2n_k,\;\;\text{for }k\ge 0, \quad\text{so}\;\;
n_{k+1}\ge n_k+2,\;\;\text{for }k\ge 1,
\end{equation}
and we define the corresponding subsequence of closed annuli
\begin{equation}\label{Ank}
B_{n_k}=B(0;a^{2^k}, b^{2^k}),\quad k\ge 0.
\end{equation}

Now we define the rest of the $B_n$ to be certain nested finite
sequences of closed annuli lying between adjacent annuli
$B_{n_k}$. For $k\ge 1$, we put
\[p_k=\tfrac12 a^{2^k},\quad\text{so}\quad p_k>2b^{2^{k-1}},\]
by~(\ref{start}),
\[m_k=n_{k+1}-n_k-1,\;\;\text{for }k\ge 0,\;\;\text{so}\quad m_k\ge 1,\;\;\text{for }k\ge 1,\]
by~(\ref{nprop}), and choose $t_k$, $0<t_k<1$, such that
\[\frac{t_k}{a^{2^k}}<\frac{1} {2b^{2^k}}.\]
Then define
\[
B_{n_k+j}=B\left(p_k;\frac{t_k^{j-1}}{b^{2^k}},
\frac{t_k^{j-1}}{a^{2^k}}\right),\quad j=1,\ldots,m_k,
\]
which are $m_k$ nested disjoint closed annuli lying between
$B_{n_{k-1}}$ and $B_{n_{k}}$. The function
\begin{equation}\label{g1}
z\mapsto p_k+\frac{1}{z}, \quad z\in B_{n_k},\;k\ge 1,
\end{equation}
maps $B_{n_k}$ one-to-one onto $B_{n_k+1}$, and the function
\begin{equation}\label{g2}
z\mapsto p_k+t_k(z-p_k)
\end{equation}
maps $B_{n_k+j}$ one-to-one onto $B_{n_k+j+1}$, for
$j=1,\ldots,m_k-1$. Note that the annuli $B_{n_k}$, $B_{n_k+1}$,
\ldots, $B_{n_k+m_k}$ all have modulus $2^k \log(b/a)$.

Finally, for $k\ge 1$, we put $S_k=t_k^{2(m_k-1)}$, so that the
function
\begin{equation}\label{g3}
z\mapsto \frac{S_k}{(z-p_k)^2}
\end{equation}
maps $B_{n_k+m_k}$ two-to-one onto $B_{n_{k+1}}=B\left(0;a^{2^{k+1}},b^{2^{k+1}}\right)$.

Now we use~(\ref{g1}),~(\ref{g2}) and ~(\ref{g3}) to define a
function $g$ on the annuli $B_n$, $n\ge 1$, and we also set $g(z)=z^2$ on
$B_0$.

We have
\[
g^{n}(B_0)=B_n, \quad\text{for }n\ge 1,\quad\text{and}\quad
g^{n_k}(z)=z^{2^k},\quad\text{for }k\ge 0,
\]
since
\begin{equation}\label{zsquare}
g^{m_k+1}(z)=z^2,\quad\text{for }z\in B_{n_k},\;k\ge 0.
\end{equation}

Next, for $n\ge 0$, we choose closed annuli $B'_n\subset B_n$,
with $B'_n$ and $B_n$ concentric, and $\partial B'_n$ so close to
$\partial B_n$ that, for $n\ge 0$,
\begin{equation}\label{Bnclose}
z_0,z'_0\in B'_0,\quad g(B'_n)\subset B'_{n+1}\;\; \text{and}\;\; {\rm
dist}(\partial g(B'_n),\partial B'_{n+1})>0.
\end{equation}

Now, for $k\ge 0$, let
\[
E_k=\bigcup_{n=n_k}^{n_k+m_k}B_n,
\]
and let $P_k$ be a finite set, including $\infty$, with one point
in each component of $\hat{\C}\setminus (E_k\cup \{z:|z|\le
b^{2^{k-1}}\})$. Note that $E_k\cap\{z:|z|\le b^{2^{k-1}}\}=\emptyset$ and 
$g$ is analytic on each set $E_k$.

For each $k\ge 0$, we can use Runge's theorem (see~\cite{dG85}) to choose a rational
function $f_k$ with poles in the set $P_k$ such that
\begin{equation}\label{eps}
\sup_{E_k}|f_0+f_1+\cdots +f_k-g|<\eps_k\quad\text{and}\quad
\sup\{|f_k(z)|:|z|\le b^{2^{k-1}}\}<\eps_k,
\end{equation}
where the positive sequence $\eps_k$, $k\ge 0$, is so small that
\[
f(z)=f_0(z)+f_1(z)+\cdots\quad\text{is locally uniformly convergent on }\C,
\]
and (using~(\ref{Bnclose})) the function $f$ is so close to $g$ on
the annuli $B_n$, $n\ge 0$, that
\begin{equation}\label{Bninv}
f(B'_n)\subset B'_{n+1}, \;\;\text{for } n\ge 0,
\quad\text{so}\quad \bigcup_{n=0}^{\infty}B'_n\subset F(f)\cap
I(f),
\end{equation}
by Montel's theorem, and also, by~(\ref{zsquare}), that
\[
|f^{m_k+1}(z)-g^{m_k+1}(z)|=|f^{m_k+1}(z)-z^2|<1, \;\;\text{for }
z\in B'_{n_k},\;k\ge 0.
\]
Hence
\begin{equation}\label{forbit}
f^{n_k}(z)\in B'_{n_k}\quad\text{and}\quad
|f^{n_{k+1}}(z)-f^{n_k}(z)^2|<1, \;\;\text{for } z\in
B'_0,\;k\ge 0.
\end{equation}

By~(\ref{Bnclose}) and~(\ref{Bninv}) we see that $z_0$ and $z'_0$
lie in the same Fatou component of~$f$, say~$U$.
We now show that $z_0\in U\cap I^a(f)$, where $a_n=b^{n+1}$,\;$n\ge
0$. Let $X_k=|f^{n_k}(z_0)|$, \;$k\ge 0$. Then $X_0=z_0>1$
and, by~(\ref{Bnclose}) and~(\ref{forbit}),
\[X_{k+1}\le X_k^2+1,\quad\text{for }k\ge 0.\]
Thus by induction
\[X_k\le (X_0+1)^{2^k}-1,\quad\text{for }k\ge 0.\]
Therefore, for $k\ge 0$ and $j=0,1,\ldots, m_k$, by~(\ref{bineq}),
\[|f^{n_k+j}(z_0)|\le (X_0+1)^{2^k}-1\le b^{n_k+1}\le b^{n_k+j+1},\]
so $z_0\in I^a(f)$, as required.

Now let $Y_k=|f^{n_k}(z'_0)|$, \;$k\ge 0$. Then $Y_0=z'_0>1$ and,
by~(\ref{Bnclose}) and~(\ref{forbit}),
\[Y_{k+1}\ge Y_k^2-1,\quad\text{for }k\ge 0.\]
It again follows by induction that
\[Y_k\ge (Y_0-1)^{2^k}+1,\quad\text{for }k\ge 0.\]
Therefore, for $k\ge 0$, by~(\ref{start}) and~(\ref{bineq}),
\[
|f^{n_k}(z'_0)|\ge (z'_0-1)^{2^k}+1> (z_0+2)^{2^k}>
\left(\frac{z_0+2}{z_0+1}\right)^{2^k}b^{n_k}=
\left(\frac{z_0+2}{z_0+1}\right)^{2^k}\frac{a_{n_k}}{b},
\]
so $z'_0\notin I^a(f)$. This proves part~(a).

By Theorem~\ref{main2}(d), the Fatou component $U$ must be a
{\pwd}. Thus the Fatou components $U_n\supset B'_n$, $n\ge 0$, are
disjoint. Since $m_k\ge 2$ for $k\ge 3$, by~(\ref{nprop}), there
are infinitely many of these Fatou components that do not surround
$0$, so $U$ is not a {\bwd}.
\end{proof}
{\it Remark}\quad The proof of Example~\ref{ex1} can easily be
modified to give the same type of example in which $(a_n)$ is any
positive increasing sequence such that $a_n\to\infty$ as
$n\to\infty$ and $a_{n+1}=O(a_n)$ as $n\to\infty$.

\section{Proof of Theorem~\ref{main3}}
\setcounter{equation}{0} Theorem~\ref{main3} follows easily from
Theorem~\ref{main2}, Corollary~\ref{cor1} and the definition
of~$M(f)$.

\begin{proof}[Proof of Theorem~\ref{main3}]
In part~(a), $f$ has a direct tract and no {\bwd}s. Hence
\[J(f)=\partial L(f)=\partial M(f)=\partial I^a(f),\]
by Theorem~\ref{main2}(c) and Corollary~\ref{cor1}(a). In this
situation we know that $J(f)$ has at least one unbounded
component; see~\cite[Theorem~5.3]{BRS08}. Hence the sets $\partial
L(f)$, $\partial M(f)$ and $\partial I^a(f)$ each have at least
one unbounded component, as required.

To prove part~(a)(ii), recall that if $f$ is entire and $J(f)$ has
a bounded component, then $f$ has a {\bwd};
see~\cite[Theorem~1]{K98}. Thus all the components of $J(f)$ are
unbounded, so this is also true for all the components of
$\partial L(f)$, $\partial M(f)$ and $\partial I^a(f)$, as
required.

Theorem~\ref{main3}(b) follows immediately from the fact that if $f$
has a direct tract, then any {\bwd} lies in $Z(f)$, as mentioned in
the introduction before Corollary~\ref{cor1}, together with the fact
that $M(f)\cap Z(f)=\emptyset$.
\end{proof}

\section{Examples}
\setcounter{equation}{0} We end the paper by giving a number of
explicit examples to show how varied the structures of the sets
$L(f)$, $M(f)$ and $I^a(f)$ can be. Here, as usual, $a=(a_n)$ is a
positive sequence such that $a_n\to\infty$ as $n\to\infty$.

\begin{example}\label{ex2}
Let
\[
f(z)=\lambda e^z,\quad\text{where }0<\lambda<1/e.
\]
Then the components of $M(f)$ are all singletons, and hence so are
those of $L(f)$ and also $I^a(f)$ when $I^a(f)\subset M(f)$. However,
all the components of $\overline{L(f)}$, $\overline{M(f)}$ and
$\overline{I^a(f)}$ are unbounded.
\end{example}
\begin{proof}
In this case, $F(f)$ is a completely invariant immediate attracting basin and~$J(f)$ consists of uncountably many disjoint simple curves, each with one finite endpoint and the other endpoint at $\infty$; see~\cite{DT86}. The set $I(f)$ consists of the open curves
(without endpoints) together with some of their finite endpoints; see~\cite{bK99} and~\cite{lR06}. These open curves are in $Z(f)$, and even in $A(f)$; see~\cite{SZ} and~\cite{RS08c}. Thus $M(f)$ is contained in the set of 
finite endpoints, which is totally disconnected (see~\cite{M}), so the components of $M(f)$ are
singletons. However, the components of $\overline{L(f)}$,
$\overline{M(f)}$ and $\overline{I^a(f)}$ are all unbounded, by
Theorem~\ref{main3}(a)(ii).
\end{proof}

\begin{example}\label{ex3} Let \[f(z)=z+1+e^{-z}.\]
Then $f$ has a completely invariant Baker domain $U$ such that
$U\subset L(f)$. The sets $L(f)$, $M(f)$ and $I(f)$ are all connected
and dense in $\C$, as is $I^a(f)$ whenever $\liminf_{n\to\infty}
a_n/n>0$. However, all components of $M(f)\cap J(f)$ are singletons,
and hence so are those of $L(f)\cap J(f)$ and also $I^a(f)\cap J(f)$
when $I^a(f)\subset M(f)$.
\end{example}
\begin{proof}
In this case, $F(f)$ is a completely invariant Baker domain $U$
in which $f^n(z)\to \infty$ as $n\to\infty$ and $|f^n(z)|=O(n)$ as
$n\to\infty$; see~\cite[Example~1]{pF26}. Thus we have $U\subset
L(f)\subset M(f)$ and $U\subset I^a(f)$, whenever $\liminf_{n\to\infty}
a_n/n>0$. Note, however, that there are points of $\partial U$ (for example, fixed
points of $f$) which are not in $I(f)$. Each of the sets $L(f)$,
$M(f)$, $I^a(f)$ and $I(f)$ is connected and dense in $\C$, since
$U\subset I^a(f)\subset \overline{U}=\C$, for example.

It can be shown by using a result of Bara\' nski~\cite[Theorem~3]{kB08}, together with the fact that $f$ is the lift of $g(w)=(1/e)we^{-w}$ under $w=e^{-z}$, that $J(f)$ consists of uncountably many disjoint simple curves, each with one finite endpoint and the other endpoint at~$\infty$, and $I(f)\cap J(f)$ consists of the open curves together with some of their finite endpoints. Moreover, these open curves are in $A(f)$, so $M(f)\cap J(f)$ is contained in the set of finite endpoints, and its components are singletons; see~\cite{RS08c}.
\end{proof}

\begin{example}\label{ex4} Let \[f(z)=z+\sin z+2\pi.\]
Then every component of $F(f)$ is a bounded {\wand} whose closure is
contained in $L(f)$. The sets $I(f)$ and $L(f)$ are connected and
dense in~$\C$, and $I(f)\cap J(f)$ and $L(f)\cap J(f)$ are connected
and unbounded. Similar properties hold for $I^a(f)$ whenever
$\liminf_{n\to\infty} a_n/n>0$.
\end{example}
\begin{proof}
Since the function $f$ and $h(z)=z+\sin z$ are both lifts under
$w=e^{iz}$ of 
\[g(w)=w\exp(\tfrac12(w-1/w)),\quad z\in\C\setminus\{0\},\]
we have $F(g)=\exp(iF(f))=\exp(iF(h))$, by a result of Bergweiler~\cite{wB95a}. Now $F(g)$ consists of the basin of attraction of the super-attracting fixed point~$-1$, which is the only singular value
of~$g$, so $F(f)=F(h)$ is the lift of this basin. 

In particular, the point~$-1$ lifts to the points~$(2n+1)\pi,\;n\in\Z$, which are super-attracting fixed points of $h$. The immediate basin of attraction of~$\pi$ contains the
interval $(0,2\pi)$ and it is bounded, since $h$ maps the boundary of
the open rectangle $\{x+iy:0<x<2\pi, -3<y<3\}$ outside this rectangle; see~\cite{nF99} for a discussion of the mapping properties of such functions.

Thus $F(f)=F(h)$ consists of an infinite necklace of congruent
bounded Fatou components, say $U_n$, $n\in\Z$, where $(2n+1)\pi\in
U_n$, together with the successive preimages of these components
under $h$. The components $U_n$ form a wandering orbit under $f$ with
$f(U_n)=U_{n+1}$ for $n\in\Z$, in which $f^n(z)\to \infty$ as
$n\to\infty$ and $|f^n(z)|=O(n)$ as $n\to\infty$. Hence
\[\R\subset \bigcup_{n\in\Z}\overline{U_n}\subset L(f)\quad\text{and}\quad
\bigcup_{n\in\Z}\partial{U_n}\subset L(f)\cap J(f).\]

Now, the preimage under $f$ of the real axis consists of the real
axis itself and infinitely many pairs of curves tending to $\infty$
at both ends, each pair passing through a critical point $(2n+1)\pi$,
$n\in\Z$, and lying in $\{z:(2n+\tfrac12)\pi<\Re z
<(2n+\tfrac32)\pi\}$. Thus $F(f)$ consists of infinite necklaces of
bounded Fatou components together forming an infinite tree-like
structure. Therefore the set
\[E_1=\bigcup\{\overline{U}: U \;\text{is a Fatou component of}\; f\}\]
is connected and dense in~$\C$, and $E_1\subset L(f)\subset
I(f)\subset \C=\overline{E_1}$. Hence $I(f)$ and $L(f)$ are connected
and dense in~$\C$. The set
\[E_2=\bigcup\{\partial{U}: U \;\text{is a Fatou component of}\; f\}\]
is also connected and unbounded, and
\[E_2\subset L(f)\cap J(f)\subset I(f)\cap J(f)\subset J(f)
=\overline{E_1}\setminus F(f)=\overline{E_2}.\]
Hence $I(f)\cap J(f)$ and $L(f)\cap J(f)$ are connected and unbounded.
\end{proof}

\begin{example}\label{ex5} Let
\[f(z)=z+e^{-z}+2\pi i.\]
Then every component of $F(f)$ is an unbounded {\wand} $U$ contained
in $L(f)$ and also in $I^a(f)$ whenever $\liminf_{n\to\infty}
a_n/n>0$. However, the boundaries of these {\wand}s are not contained
in $L(f)$.
\end{example}
\begin{proof}
The function $h(z)=z+e^{-z}$ has congruent unbounded invariant Baker
domains $U_n$, $n\in \Z$, such that $2n\pi i\in U_n\subset
\{z:(2n-1)\pi<\Im(z)<(2n+1)\pi\}$, and $|h^m(z)|=O(m)$ as
$m\to\infty$, for $z\in U_0$; see~\cite{Pat2}. The Fatou set of $h$
consists of these Baker domains and their successive preimages under
$h$, which are all unbounded. Since $J(f)=J(h)$, by~\cite{wB95a}, the
components $U_n$ form a wandering orbit under $f$ with
$f(U_n)=U_{n+1}$ for $n\in\Z$, in which $f^n(z)\to \infty$ as
$n\to\infty$ and $|f^n(z)|=O(n)$ as $n\to\infty$. Hence
$F(f)=F(h)\subset L(f)$.

It was shown in~\cite[Theorem~6.1]{BD99} that $\Gamma_0=\{z:\Im
z=\pi\}\subset\partial U_0$ so $\Gamma_n=\{z:\Im
z=n\pi\}\subset\partial U_n$ for $n\in\Z$. It is easy to check
directly that each $\Gamma_n\subset Z(f)$, and even that
$\Gamma_n\subset A(f)$. Thus $\partial U_n$ is not contained in
$L(f)$, for $n\in\Z$, and the result follows.
\end{proof}

\begin{example}\label{ex6} Let
\[
f(z)=\frac12\left(\cos z^{1/4}+\cosh
z^{1/4}\right)=1+\frac{z}{4!}+\frac{z^2}{8!}+\cdots.
\]
Then $A(f)$ and $I(f)$ are connected, all components of $M(f)$ are bounded,
and all components of $\overline{L(f)}$, $\overline{M(f)}$ and
$\overline{I^a(f)}$ are unbounded.
\end{example}
\begin{proof}
The connectedness of $A(f)$ and $I(f)$ is proved in
~\cite[Corollary~5]{RS08b}. This proof depends on the fact that there
is a sequence of continua in $A(f)$ which surround $0$ and tend to
$\infty$, and this property forces all components of $M(f)$ to be
bounded. Also, $f$ has no {\bwd}s; see~\cite[Section~6]{RS08b}. Hence
all the components of $\overline{L(f)}$, $\overline{M(f)}$ and
$\overline{I^a(f)}$ are unbounded by Theorem~\ref{main3}(a)(ii).
\end{proof}
{\it Remark}\quad Using results in~\cite[see Theorem~2, its proof,
and Section~6]{RS08b}, we can show that there are many {\tef}s that
have the properties of Example~\ref{ex6}, as follows:
\begin{itemize}
 \item if $f$ is a {\tef} and there is a {\it hole} in $A(f)$, that is, a
bounded domain $G$ such that $\partial G\subset A(f)$ but $G\cap
J(f)\ne \emptyset$, then there is a sequence of continua in~$A(f)$
which surround~$0$ and tend to~$\infty$, from which it follows that
$A(f)$ and $I(f)$ are connected, and all the components of $M(f)$ are
bounded;
 \item there are many examples of {\tef}s for which there is a hole in
 $A(f)$ and no {\bwd}s -- all such functions must have the
 properties of Example~\ref{ex6}.
\end{itemize}
Note that in~\cite{RS08b} the set $A(f)$ is called $B(f)$ because an
alternative definition is used.

\begin{example}\label{ex7}
Let
\[f(z)=2z+2-\log 2-e^z.\]
Then $f$ has an invariant Baker domain $U$ such that $\overline{U}\setminus\{z_0\}\subset L(f)$,
where $z_0\in \partial U$ is a fixed point of $f$, and a bounded {\wand} $V$ such
that $\overline{V}\subset L(f)$.
\end{example}
\begin{proof}
It is shown in~\cite{wB95} that the function $f$ has
\begin{itemize}
\item an invariant Baker domain $U$ contained in $\{z:\Re z<0\}$
such that the map $f:U\to U$ is univalent and $\partial U$ is a
Jordan curve through $\infty$; \item a bounded Fatou component
$V_0$ containing the super-attracting fixed point $\log 2$; \item
bounded Fatou components of the form $V_k=\{z+2\pi ki:z\in V_0\}$,
$k\in \Z$, such that $f(V_k)=V_{2k}$, for $k\in\N$.
\end{itemize}
Thus $V=V_1$ is a bounded {\wand} such that $\overline{V}\subset L(f)$.

Also, it is easy to check that $\partial U$ meets the real axis at a repelling fixed point~$z_0$ of $f$ and that
\[(3/2)^n|z|\le |f^n(z)|\le 3^n |z|,\quad\text{for }z\in
\overline{U}\cap\{z:|z|\ge 2(3+\log 2)\},\] 
so
\begin{equation}\label{modlarge}
\overline{U}\cap\{z:|z|\ge 2(3+\log 2)\}\subset L(f).\end{equation}
Since $f$ is univalent on $U$,
it is conjugate, via a Riemann map, to a M\"obius transformation of the
unit disc onto itself. Since $\partial U$ is a Jordan curve the Riemann map extends to a homeomorphism on the closed unit disc, so the conjugate M\"obius transformation fixes two boundary points, one repelling and one attracting; the latter attracts all points of $\C$ except the repelling fixed point. It follows that
$\overline{U}\setminus\{z_0\}\subset I(f)$. Hence, by~(\ref{modlarge}), the whole of 
$\overline{U}\setminus\{z_0\}$ is contained in $L(f)$.

Note that in this case $J(f)$ is connected; see~\cite{K98}.
\end{proof}

Our final example shows that Theorem~\ref{main3}(a)(ii) is false without the assumption that
$f$ is entire.

\begin{example}\label{ex8}
Let \[f(z)=\lambda \sin z-\eps/(z-\pi),
\quad\text{where }0<\lambda<1,\; \eps>0 \text{ small}.\]
Then $f$ has a direct tract and no {\bwd}s, and $\overline{L(f)}$, $\overline{M(f)}$
and $\overline{I^a(f)}$ each have infinitely many bounded components.
\end{example}
\begin{proof}
The function $f$ has a direct tract, since it has only one pole, and
it has a completely invariant, unbounded, infinitely connected,
attracting Fatou component; see~\cite[Example~2]{Pat2}. Hence $J(f)$
has infinitely many bounded components and $f$ has no {\pwd}s. Thus
the sets $\overline{L(f)}$, $\overline{M(f)}$ and $\overline{I^a(f)}$
have infinitely many bounded components, by Theorem~\ref{main2}(c)
and (d).
\end{proof}

\end{document}